\newtheorem{theorem}{Theorem}[section]
\newtheorem{proposition}[theorem]{Proposition}
\newtheorem{remark}[theorem]{Remark}
\newtheorem{lemma}[theorem]{Lemma}
\newtheorem{definition}[theorem]{Definition}
\newtheorem{corollary}[theorem]{Corollary}
\newcommand\ep{{\varepsilon}}
\newcommand\zu{[0,1]}
\newcommand{\R}{\mathbb{R}}
\newcommand{\N}{\mathbb{N}}
\newcommand{\Z}{\mathbb{Z}}
\newcommand\om{\omega}
\newcommand{\calt}{\mathcal{T}}
\newcommand{\ho}{H\"older }
\newcommand{\mk}{\medskip}
\newcommand{\sk}{\smallskip}
\newcommand{\mcsib}{M^{*}_{I,\beta}}
\newcommand{\mcst}{M^{*}_{t}}
\newcommand{\OOO}{\Omega}
\newcommand{\aaa}{\alpha}
\newcommand{\bbb}{\beta}
\newcommand{\cac}{{\mathcal C}}
\newcommand{\cae}{{\mathcal E}}
\newcommand{\caf}{{\mathcal F}}
\newcommand{\cai}{{\mathcal I}}
\newcommand{\cat}{{\mathcal T}}
\newcommand{\ddd}{\delta}
\newcommand{\ds}{\displaystyle}
\newcommand{\eee}{\varepsilon}
\newcommand{\ess}{\emptyset}
\newcommand{\fff}{{\varphi}}
\newcommand{\hhh}{\eta}
\newcommand{\intt}{\mathrm{int}}
\newcommand{\kkk}{\kappa}
\newcommand{\mmm}{\mu}
\newcommand{\ooo}{\omega}
\newcommand{\oo}{\infty}
\newcommand{\sm}{\setminus}
\newcommand{\sse}{\subset}
\newcommand{\xo}{x_{0}}
\newcommand{\xe}{x_{1}}
\renewcommand{\ggg}{\gamma}
\renewcommand{\lll}{\lambda}
\newcommand{\hI}{\widehat{I}}
\newcommand{\tx}{\widetilde{x}}
\newcommand{\tr}{\widetilde{r}}
\newcommand{\txn}{\widetilde{x}_{n}}
\newcommand{\trn}{\widetilde{r}_{n}}
\begin{document}
\title[Measures and functions with prescribed  spectrum]{Measures and functions with prescribed homogeneous  multifractal  spectrum}

\author{Zolt\'an Buczolich
  \and St\'ephane Seuret }         

\thanks{
Research supported by the Hungarian
National Foundation for Scientific research K075242 and K104178.\\ }

\subjclass[2000]{Primary 26A16, 28A80; Secondary 26A15, 28A78.}

   \keywords{ Multifractal  analysis, H\"older exponent. local dimensions, Hausdorff measures and dimension}

\address{Zolt\'an Buczolich,  E\"otv\"os University, 
Department of Analysis, P\'azm\'any P\'eter s\'et\'any 1/c, 1117 Budapest,
Hungary  -   St\'ephane Seuret,   LAMA, CNRS UMR 8050, Universit\'e Paris-Est Cr\'eteil,  
  61 avenue du G\'en\'eral de Gaulle, 
94 010 CR\'ETEIL Cedex, France}
\date{\today}

\begin{abstract}
In this paper we construct measures supported in   $\zu$  with prescribed  multifractal  spectrum. Moreover, these measures   are  homogeneously multifractal (HM, for short), in the sense that their restriction on any   subinterval of $\zu$ has the same  multifractal  spectrum as the whole measure. The spectra $f$ that we are able to prescribe  are  suprema  of a countable set of step functions supported by subintervals of $\zu$ and satisfy $f(h)\leq h$ for all $h\in [0,1]$. We also find a surprising constraint on the   multifractal  spectrum of a HM measure: the support of its spectrum within $[0,1]$ must be an interval. This result is a sort of Darboux theorem for multifractal spectra of measures. This result is optimal, since  we construct a HM measure with spectrum supported  by $\zu\cup \{ 2 \}$. Using wavelet theory, we also build HM functions with  prescribed multifractal spectrum.
\end{abstract}

\maketitle

\section{Introduction}

The  multifractal spectrum is now a widely spread issue in analysis.  It allows one to describe the local behavior of a given Borel measure, or a function. 
Our goal is to investigate the possible forms that a multifractal spectrum can take. We  obtain a   new Darboux-like theorem for the  spectrum of homogeneous multifractal measures, and we are able to construct measures   with  prescribed  non-homogeneous and homogeneous multifractal spectrum obtained as suprema of countable sets of step functions, when the local dimensions of the measure are less than 1. Using wavelet methods, we extend our result to non-monotone functions.

\mk

  Before exposing our results, let us introduce how the local regularity   is quantified. Recall that the support of a Borel positive measure, denoted by Supp($\mu$),  is the smallest closed set $E\in \R^d$ such that $\mu(\R^d\setminus E)=0$. 
\begin{definition}
The local regularity of a positive Borel measure $\mu$ on $\R^d$ at a given $x_0\in$ Supp$(\mu)$ is quantified  by the (lower)  local dimension $h_\mu(x_0)$  (also called  local \ho exponent), defined as  
\begin{equation}
\label{defexpmu}
h_\mu(x_0)=\liminf_{r\to 0^+}  \frac{\log \mu (B(x_0,r) )} { \log r},
\end{equation}
where $B(x_0,r)$ denotes the open ball with center $x_0$ and radius $r$.  When $x_0\notin$ Supp$(\mu)$, by convention we set $h_\mu(x_0)=+\infty$.

Let $Z \in L^\infty_{loc}(\R^d)$, and $\alpha>0$.  The function   $Z$ is said to belong to the space $C^\alpha_{x_0}$ if there are a polynomial $P$ of degree less than $[\alpha]$ and a constant $C>0$ such that
\begin{equation}
\label{defpoint}
\mbox{for every $x$ in a neighborhood of $x_0$, } \ |Z(x) - P(x-x_0)| \leq C |x-x_0|^\alpha.
\end{equation}
The pointwise \ho exponent of $Z$ at $x_0$ is $h_Z(x_0) = \sup\{\alpha\geq 0: \ f\in C^\alpha_{x_0} \}.$ \end{definition}

\smallskip

We will mainly focus on real functions and measures, i.e. $d=1$. Even more, we will consider only measures and functions whose support are included in $\zu$: this has no influence on our dimensions questions.

\smallskip

Observe that when $h_Z(x_0) \le 1$, the pointwise \ho exponent of $Z$ at $x_0$ is also given by the formula
\begin{equation}
\label{defexpZ}
h_Z(x_0)= \liminf_{x \to x_0} \frac{\log |Z(x)-Z(x_0)|}{\log |x-x _0|}.
\end{equation}

Of course, there is a correspondence between the exponents and the spectrum of a measure $\mu$ and the same quantities for its integral $\displaystyle F_\mu(x)= \mu([0,x])$. Comparing formulae  \eqref{defexpmu} and \eqref{defexpZ}, we see that when  $h_\mu(x_0)< 1$,  $h_{F_\mu}(x_0)=h_\mu(x_0)$.  Problems may occur when  $h_\mu(x_0) \geq  1$ due to the presence of a polynomial in \eqref{defpoint}. Since we  mainly  consider  exponents less than 1  for measures, the problem may rise only when    $h_\mu(x)=1$. In order to guarantee that $h_\mu(x)=1$ is equivalent to $h_{F_\mu}(x) =1$ in our results, we add to each measure  another fixed measure ${\widetilde\mu}$ satisfying $h_{F_{\widetilde\mu} }(x) =1$ for all $x$  (see Proposition \ref{homone} for the existence of ${\widetilde\mu}$). Hence, in the sequel, {\bf we   work  equivalently with continuous monotone functions on the interval $\zu$, or with diffuse measures supported on  $\zu$}.

\begin{definition}
The  multifractal  spectrum of a  measure $\mu$ ({\em resp.} a  function $Z$) is the mapping $d_\mu$  ({\em resp.} $d_Z$) defined as
$$
 h\geq 0 \ \longmapsto \ d_\mu(h) := \dim E_\mu(h),   \ \ \ \ \   \ \ \ \mbox{ (\,{\em resp.}  \ $d_Z(h) := \dim E_Z(h)$)}
 $$
 where 
\begin{equation}
\label{defeh}
   E_\mu(h) :=\{x: h_\mu(x) =h\},  \ \ \ \mbox{ (\,{\em resp.}  \ $  E_Z(h) :=\{x: h_Z(x) =h\}$)}. 
\end{equation}
 By convention, we set $\dim \emptyset = -\infty$.  We call the support of the multifractal spectrum of $\mu$  ({\em resp.}    of $Z$) the set
\begin{equation}
\label{defsupport}
   \mbox{Support\,($d_\mu$)}=\{h\geq 0:d_\mu(h)\geq 0\} \ \ \mbox{(\,{\em resp.}     Support\,($d_Z$)=}\{h\geq 0:d_Z(h)\geq 0\} ). 
\end{equation}

 \end{definition}
Observe that  $d_\mu(h) \geq 0$
 as soon as there is a point in $E_\mu(h)$. The multifractal spectrum describes the geometrical distribution of the singularities of the measure or the function under consideration.
It is natural to investigate the   forms possibly taken by a  multifractal  spectrum, this is our goal in this paper.

\mk

A first question concerns  the prescription of the local dimension of a measure or a function. The pointwise \ho exponents of functions are well understood  \cite{JAFFPRES,DLVM,SJLV}: given a continuous function $f:\zu\to\R$, the mapping $x \mapsto h_f(x)$ is the liminf of a sequence of positive continuous functions, and, reciprocally, any liminf of a sequence of positive continuous functions is  indeed the map $x \mapsto h_f(x)$ associated with some continuous function~$f$. 

It is much more delicate to deal with the local dimensions of measures, as stated by the following lemma.
\begin{lemma}
\label{lem0}
Let $\mu$ be a  probability measure supported
on the closure of 
 an open set $\Omega\subset \R^d$. If  the mapping   $x\mapsto h_\mu(x)$ is continuous,   then it is     constant equal to $d$.
\end{lemma}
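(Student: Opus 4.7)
The plan is to argue by contradiction: assume $h_\mu$ is not identically $d$ on $\mathrm{Supp}(\mu) = \overline{\Omega}$, and treat the two cases $h_\mu(x_0) > d$ and $h_\mu(x_0) < d$ separately. In either case I will use the continuity of $h_\mu$ together with the openness of $\Omega$ (by replacing $x_0$ by a nearby interior point if necessary, which is legitimate since $x_0 \in \overline{\Omega}$) to produce a small open ball $V \subset \Omega$ on which the deviation of $h_\mu$ from $d$ is uniform. Since $V$ is open and $V \subset \mathrm{Supp}(\mu)$, one has $\mu(V) > 0$; and $\mathcal{L}^d(V) > 0$ because $V$ is a ball.

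For the case $h_\mu > d + 2\ep$ on $V$, my strategy is a classical mass-distribution argument. Write $V = \bigcup_N V_N$ where
\[
V_N = \bigl\{x \in V : \mu(B(x,r)) \leq r^{d+\ep} \text{ for all } 0 < r \leq 1/N\bigr\};
\]
the inclusion follows from the liminf definition of $h_\mu$, and measurability is handled by restricting to rational $r$ and using the monotonicity of $r \mapsto \mu(B(x,r))$. Some $V_{N_0}$ then satisfies $\mu(V_{N_0}) > 0$. For any cover $\{U_i\}$ of $V_{N_0}$ by sets of diameter at most $1/N_0$, I discard those not meeting $V_{N_0}$ and, for the others, replace $U_i$ by the ball $B(x_i, \mathrm{diam}(U_i))$ centered at a point $x_i \in V_{N_0}\cap U_i$; this yields $\mu(U_i) \leq \mathrm{diam}(U_i)^{d+\ep}$, and summing and refining the cover gives $\mu(V_{N_0}) \leq \mathcal{H}^{d+\ep}(V_{N_0}) = 0$, because $V_{N_0}\subset\R^d$ and $d+\ep>d$. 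Contradiction.

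For the case $h_\mu < d - 2\ep$ on $V$, I will use Vitali covering. The sign of $\log r$ for small $r$ translates the assumption into: for every $x\in V$ and every $\eta>0$ there exists $r\in(0,\eta)$ with $\mu(B(x,r)) > r^{d-\ep}$. The family of all such balls with $r<\eta$ is therefore a Vitali cover of $V$ in $\R^d$, and Vitali's theorem produces a countable pairwise disjoint subfamily $\{B(x_i,r_i)\}_i$ with $r_i < \eta$ and $\sum_i r_i^d \geq c_d\,\mathcal{L}^d(V)$. Combining disjointness with $\mu(\R^d)=1$ I obtain
\[
1 \;\geq\; \sum_i \mu(B(x_i,r_i)) \;>\; \sum_i r_i^{d-\ep} \;\geq\; \eta^{-\ep}\sum_i r_i^d \;\geq\; c_d\,\eta^{-\ep}\,\mathcal{L}^d(V),
\]
which is absurd once $\eta$ is small enough, since $\mathcal{L}^d(V)>0$.

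Neither half is conceptually delicate; both arguments are standard combinations of covering lemmas with the $\R^d$-Hausdorff-dimension ceiling and the finiteness of $\mu$. The step I expect to require the most care is the reduction itself, namely exploiting the continuity of $h_\mu$ and the openness of $\Omega$ to pass from a pointwise assumption $h_\mu(x_0)\neq d$ to a uniform inequality on an open ball of positive $\mu$-mass, together with the mild measurability bookkeeping needed to define $V_N$; once this is in place, the two covering estimates close the argument.
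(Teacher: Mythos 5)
Your proof is correct, and it follows a genuinely different route from the paper's. The paper also starts from a ball $B\subset\Omega$ of positive $\mu$-measure on which $h_\mu$ is uniformly away from $d$ and splits into the same two cases, but then proceeds differently: for $h_\mu\leq d-\ep$ on $B$, it simply invokes Proposition \ref{prop2} (the cited dimension bound $\dim E^{\leq}_\mu(h)\leq h$), which forces $\dim B\leq d-\ep<d$, a contradiction since $B$ is an open ball; and for $h_\mu\geq d+\ep$ on $B$, it applies the Besicovitch covering theorem to extract boundedly-overlapping balls $B(x,r_x)$ with $\mu(B(x,r_x))\leq|B(x,r_x)|^{d+\ep/2}$ and compares their $\mu$-mass to their Lebesgue mass, letting $r_x\to 0$ to force $\mu(B)=0$. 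You instead handle the large-exponent case by a mass-distribution estimate that bounds $\mu(V_{N_0})$ by a Hausdorff pre-measure $\mathcal H^{d+\ep}_{1/N_0}(V_{N_0})=0$, and the small-exponent case by the Vitali covering theorem for Lebesgue measure, obtaining $1\geq c_d\,\eta^{-\ep}\mathcal L^d(V)$ and sending $\eta\to 0$. The two case-arguments are thus essentially swapped between the two proofs. What your version buys is self-containment: you never invoke Proposition \ref{prop2}, so the lemma is proved purely from Vitali and a Hausdorff-measure estimate; the paper's version is shorter because it recycles the already-available dimension bound. Both covering-type arguments (Besicovitch in the paper, Vitali for you) are standard and equally applicable here. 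Minor points to tidy up in a write-up: replace open balls by closed balls (or enlarge the radius by a factor $2$) when invoking Vitali and when bounding $\mu(U_i)$ by $\mu(\overline{B(x_i,\mathrm{diam}\,U_i)})$, and note explicitly that $\mathcal H^{d+\ep}(V_{N_0})=0$ because $V_{N_0}$ is a bounded subset of $\R^d$ and $d+\ep>d$, which in turn forces $\mathcal H^{d+\ep}_{1/N_0}(V_{N_0})=0$ since $\mathcal H^s_\delta$ is non-increasing in $\delta$.
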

Lemma \ref{lem0} is quite easy (see Section \ref{sec_prel}), but it helps to understand that, from the local regularity standpoint,  measures are less flexible than functions.

\begin{definition}
A measure $\mu $  supported in $\zu$  is    homogeneously multifractal (for short, HM) when the restriction of $\mu$ to  any non-degenerate subinterval of $\zu$  has the same multifractal spectrum, i.e. for any non-empty subinterval $U\subset \zu$, 
$$\mbox{for any $h\geq 0$, } \  \dim \{x\in  U: h_\mu(x)=h\} = 
\dim \{x\in \zu: h_\mu(x)=h\} = d_\mu(h).$$
A similar definition applies to HM functions $Z$.
\end{definition}

Some results related to the prescription of multifractal spectrum have already been obtained.
The main result   is due to  Jaffard in \cite{J2}, and concerns only general functions (non-monotone). Jaffard proved, by wavelet methods, that any  supremum of a countable family of positive  piecewise constant functions with support in $(0,+\infty)$ is the  multifractal  spectrum of a function $Z:\zu\to\R$, which is  non-HM and non-monotone.  This  interesting result  yields no information on the possible spectra of measures (which are integrals of positive non-decreasing functions), nor on HM measures or functions.

\medskip

In this article, we  obtain the first results  for the prescription of  multifractal spectrum of measures, both HM and non-HM, and also of  HM functions.  In addition, when the measures $\mu$ are homogeneously multifractal,   we discover  a surprising constraint on $  \mbox{Support\,($d_\mu$)}$. 
 
\mk
 
Our first result is a Darboux-like  theorem for multifractal spectra of  HM  measures (thus, it holds also for monotone functions) for exponents less than 1. Darboux's theorem for a real differentiable function $Z:\R\to \R$ asserts that the image $Z'(I)$ of any interval $I\subset \R$ by the derivative of $Z$ is an interval. We obtain a similar result  for  HM  measures with exponents less than one: the support of the  multifractal  spectrum of $\mu$,  $ \mbox{Support\,($d_\mu$)}$ defined by \eqref{defsupport},  always contains an interval.  In other words, there is {\bf no spectrum gap for exponents less than 1 in the multifractal spectrum of HM measures}.  The necessary connectedness of  $  \mbox{Support\,($d_\mu$)}$ when $\mu $ is HM is a delicate issue. Establishing conditions under which   $ \mbox{Support\,($d_\mu$)}$ computed using limit exponents (not liminf exponents, as we do here) is necessarily connected, would be very interesting and useful in some situations, for instance for self-similar measures with overlaps, self-affine measures, or Bernoulli convolutions, see \cite{Feng} for instance.  We prove the following.
 
\begin{theorem}
\label{thdarboux}
For any non-atomic HM measure $\mu$ supported on $\zu$,  $ \mbox{Support\,($d_\mu$)} \,\cap \, \zu$ is necessarily an interval of  the form  $[\alpha,1] $, where $0\leq \alpha \leq 1$.
 \end{theorem}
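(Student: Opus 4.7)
The plan is to establish two properties of $S := \mbox{Support}(d_\mu) \cap [0,1]$: that $\sup S = 1$ with $1$ attained, and that $S$ is connected. I would first observe that for HM measures with $\mbox{Supp}(\mu) \subsetneq [0,1]$, any $x \notin \mbox{Supp}(\mu)$ has $h_\mu(x) = +\infty$; by HM this propagates to make $S$ empty, so the theorem applies meaningfully only when $\mbox{Supp}(\mu) = [0,1]$. Under this reduction, the standard bound $h_\mu(x) \leq 1$ for $\mu$-a.e.\ $x$ (a consequence of $\mu$ being a finite measure on $\R$) combined with a mass-distribution argument forces $\sup S = 1$: were $\sup S = s_0 < 1$, then $\mu$-a.e.\ $x$ would have $h_\mu(x) \leq s_0$, yielding $\dim\mbox{Supp}(\mu) \leq s_0 < 1$, contradicting full support. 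Attainment $1 \in S$ is then extracted either from the Lebesgue decomposition of $\mu$ (the absolutely continuous part, if non-trivial, yields $h_\mu = 1$ on a set of positive Lebesgue measure, hence Hausdorff dimension $1$), or, in the purely singular case, from a compactness/diagonal argument leveraging HM to cluster a sequence $x_n \in [0,1]$ with $h_\mu(x_n) \to 1$ to a point with $h_\mu = 1$ exactly.

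The heart of the theorem is the connectedness of $S$. Fix $h_1 < h < h_2$ with $h_1, h_2 \in S$ and aim to construct $x^* \in [0,1]$ with $h_\mu(x^*) = h$. The essential analytic tool is that $y \mapsto \mu(B(y,r))$ is continuous in $y$ for each fixed $r > 0$ (by non-atomicity of $\mu$), so $\alpha_r(y) := \log \mu(B(y,r)) / \log r$ is continuous in $y$ and the Intermediate Value Theorem applies. The HM hypothesis then guarantees every non-degenerate subinterval of $[0,1]$ contains both points of exponent $h_1$ (at which $\alpha_r$ dips close to $h_1$ along a scale subsequence) and points of exponent $h_2$ (at which $\alpha_r \geq h_2 - \epsilon$ for all sufficiently small $r$).

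I would then perform a nested-interval construction. At stage $n$, inside the current interval $J_{n-1}$, locate $u_n$ with $h_\mu(u_n) = h_1$ and $v_n$ with $h_\mu(v_n) = h_2$, select a scale $r_n \ll |J_{n-1}|$ at which $\alpha_{r_n}(u_n) < h - \eta_n$ and $\alpha_{r_n}(v_n) > h + \eta_n$, and apply IVT on $y \mapsto \alpha_{r_n}(y)$ along $[u_n, v_n]$ to produce $y_n$ with $\alpha_{r_n}(y_n) = h$. Define $J_n \ni y_n$ of diameter $\ll r_{n+1}$ and iterate. The limit point $x^* := \bigcap_n J_n$ then inherits $\alpha_{r_n}(x^*) \to h$ by continuity of $y \mapsto \alpha_{r_n}(y)$ on a neighborhood of $y_n$, giving $\liminf_{r \to 0} \alpha_r(x^*) \leq h$.

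The main obstacle is securing the matching lower bound $\liminf \alpha_r(x^*) \geq h$. At every small scale $r$, including the intervening scales $r \in (r_{n+1}, r_n)$ not directly built into the construction, one must guarantee $\mu(B(x^*, r)) \leq r^{h - \epsilon}$. This requires picking each $J_n$ inside a region where the uniform bound $\mu(B(y, r)) \leq r^{h - \epsilon}$ holds for all $y \in J_n$ and every $r$ in the intervening range --- a uniform control extracted from HM on the scale of $J_{n-1}$ together with the density of points of exponent $\geq h$ provided by HM. The delicate calibration of $|J_n|$, the gap between successive scales $r_n, r_{n+1}$, and the precisions $\eta_n$ --- so that no unplanned scale undoes the construction --- is the crux of the argument. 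Once both liminf bounds align, $h_\mu(x^*) = h$, and $h \in S$, completing the proof that $S$ is a connected interval of the form $[\alpha, 1]$.
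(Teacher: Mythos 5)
Your high-level plan --- exhibit $1 \in S$, then show $S$ is connected by a nested-interval construction targeting an arbitrary intermediate exponent --- matches the broad contour of the paper's proof (via its Theorem \ref{thdarbouxb}). But there are two substantive gaps, one minor and one fatal.

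\textbf{The minor gap: $1\in S$.} Your contradiction argument ``$\mu$-a.e.\ $x$ has $h_\mu(x)\leq s_0<1$, hence $\dim\mathrm{Supp}(\mu)\leq s_0$, contradicting full support'' conflates the topological support with the dimension of a carrying set. A non-atomic measure can perfectly well be carried by a set of Hausdorff dimension $<1$ while having topological support all of $[0,1]$ (glue scaled Cantor measures densely). The paper anchors the $\beta=1$ endpoint differently: by Lebesgue's theorem, $F_\mu'$ exists a.e.; if $F_\mu'(x_0)>0$ somewhere then $h_\mu(x_0)=1$ directly, and if $F_\mu'\equiv 0$ on its differentiability set, a modified version of the nested construction below still produces a point with exponent exactly $1$.

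\textbf{The fatal gap: uniform control at intervening scales.} You correctly identify ``securing $\liminf_{r\to 0}\alpha_r(x^*)\geq h$'' as the crux, but the tools you propose do not deliver it. IVT at a single scale $r_n$ produces a point $y_n$ with $\alpha_{r_n}(y_n)=h$, but this constrains nothing about $\mu(B(y_n,r))$ for $r\in(r_{n+1},r_n)$. Worse, ``density of points of exponent $\geq h$'' (which HM does give) is a statement about limits at scale $\to 0$ that is \emph{not} uniform in the point: each such $y$ satisfies $\mu(B(y,r))\leq r^{h-\ep}$ only for $r$ below a threshold depending on $y$, so it cannot be leveraged into a bound valid for all $y\in J_n$ at a prescribed range of $r$. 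What the paper does instead is a quantitative covering argument, Proposition \ref{*maxin}: a Besicovitch-type maximal inequality showing that, inside the ball $B(\tx_n,\trn)$ chosen so that $\mu(B(\tx_n,\trn))=\trn^\beta$ with $\trn$ \emph{maximal}, the set of $x$ where $\sup\{\mu(B(x,r))/(2r)^{\beta}: B(x,r)\subset B(\tx_n,\trn)\}>50$ has Lebesgue measure less than that of $B(\tx_n,\trn/3)$. This guarantees a point $x_{n+1}$ (taken moreover among Lebesgue differentiability points of $F_\mu$, furnishing the upper anchor $\mu(B(x_{n+1},r))\leq r^{\beta'}$ with $\beta'=(1+\beta)/2$) at which the maximal function is controlled --- precisely the uniform bound (P3) over all intervening radii that you flag but cannot produce. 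The maximality of $\trn$ and the maximal inequality are the two ideas your proposal is missing; without something of that strength, the construction does not close.
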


 Theorem \ref {thdarboux} is false for  non-monotone   functions:   the non-differentiable Riemann function $\displaystyle \sum_{n\geq 1} \frac{\sin \pi n^2 x}{n^2}$ is  HM, but its  multifractal  spectrum is supported by the set $[1/2,3/4]\cup\{3/2\}$, which admits an isolated point \cite{JAFFRIEMANN}. This theorem is also false for exponents greater than one, as stated  by the following complementary result.
 
\begin{proposition}\label{*spectrgap}
There  is  a HM
measure $\mmm$ on $[0,1]$  such that  $ \mbox{Support\,($d_\mu$)}=[0,1]\cup \{ 2 \}$.
\end{proposition}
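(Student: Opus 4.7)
The plan is to build $\mu=\nu+\mu_1$ where $\nu$ is a HM measure with $\mbox{Support}(d_\nu)=\zu$ but carried by a fat Cantor-like set $K$ dense in $\zu$, while $\mu_1$ is a countable superposition of continuous densities supported in the gaps of $K$, each density engineered to give its centre a local dimension of exactly $2$. The HM property for $\mu$ will then follow from the density of the gap centres in $\zu$ combined with the HM property of $\nu$ on $K$.

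More precisely, fix a countable dense set $\{x_n\}\subset\zu$ (say the dyadic rationals) and choose $\eee_n\to 0$ fast enough that the intervals $I_n=(x_n-\eee_n,x_n+\eee_n)$ are pairwise disjoint and $K:=\zu\setminus\bigcup_n I_n$ has positive Lebesgue measure inside every non-empty subinterval of $\zu$. Define $\mu_1:=\sum_n w_n\,\rho_n(x)\,dx$ with $\rho_n(x)=|x-x_n|\eee_n^{-2}\ind_{I_n}(x)$ and weights $w_n>0$ decaying so fast (e.g.\ $w_n\le\eee_n^{2}\cdot 2^{-n}$) that $\mu_1(B(x,r))\le r^{1+\eta}$ for every $x\in K$ and every small $r>0$, for some fixed $\eta>0$. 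A direct computation gives $\mu_1(B(x_n,r))=w_n r^2/\eee_n^2$ for $r\le\eee_n$, so $h_{\mu_1}(x_n)=2$, whereas $h_{\mu_1}(x)=1$ at every $x\in I_n\setminus\{x_n\}$. The measure $\nu$ is obtained by adapting the paper's main construction of a HM measure with full spectrum $\zu$ to the dyadic-type fat Cantor set $K$ in place of $\zu$: the cascaded distribution of mass is performed on the pieces of $K$ induced by a dyadic hierarchy rather than on dyadic subintervals of $\zu$, which is legitimate because $K\cap U$ has positive Lebesgue measure in every non-empty open $U\subset\zu$.

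Setting $\mu=\nu+\mu_1$, three cases arise. If $x\in K$, then $\mu_1(B(x,r))=O(r^{1+\eta})$ is negligible next to $\nu(B(x,r))\ge r^{1+o(1)}$ (since $h_\nu(x)\le 1$), whence $h_\mu(x)=h_\nu(x)\in\zu$. If $x=x_n$, then $\nu(B(x_n,r))=0$ for $r\le\eee_n$ (because $\nu$ is supported on $K$), so $h_\mu(x_n)=h_{\mu_1}(x_n)=2$. If $x\in I_n\setminus\{x_n\}$, the density $\rho_n$ is locally bounded above and below around $x$, giving $h_\mu(x)=1$. Thus every point has local dimension in $\zu\cup\{2\}$, and the density of $\{x_n\}$ together with the HM property of $\nu$ on $K$ ensures that the full spectrum $\zu\cup\{2\}$ is attained on every subinterval. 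The main obstacle is this adaptation of the HM-with-spectrum-$\zu$ construction from the interval $\zu$ to the Cantor set $K$, which requires only that $K$ carries a uniform dyadic-type homogeneous structure; once this is set up, the calibration of the parameters $(\eee_n,w_n)$ is routine.
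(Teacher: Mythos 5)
Your plan rests on a premise that cannot hold: a measure $\nu$ supported on a nowhere dense set $K$ cannot be homogeneously multifractal in the sense of this paper. Since $\{x_n\}$ is dense and the $I_n$ are open, $\bigcup_n I_n$ is a dense open set and $K=\zu\setminus\bigcup_n I_n$ has empty interior; there are therefore non-degenerate subintervals $U\subset I_n$ with $U\cap K=\emptyset$, on which $\nu|_U\equiv 0$ and the restricted spectrum is degenerate, not $[0,1]$. More importantly, the same obstacle sinks $\mu=\nu+\mu_1$ itself. On $I_n\setminus\{x_n\}$ the measure $\mu$ is absolutely continuous with the continuous, locally strictly positive density $w_n\rho_n$ (and $\nu$ contributes nothing there), so $h_\mu(x)=1$ for every $x\in I_n\setminus\{x_n\}$. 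Hence, for any non-degenerate $U$ with $\overline{U}\subset I_n\setminus\{x_n\}$, one has $\dim\{x\in U:h_\mu(x)=h\}=-\infty$ for all $h\neq 1$, whereas the global spectrum is supposed to satisfy $d_\mu(h)=h\geq 0$ for every $h\in[0,1)$. This contradicts the HM property directly, independently of how $\nu$ is actually built on $K$; the secondary concerns (the behaviour of $\mu_1$ at boundary points of $I_n$, the claimed uniform bound $\mu_1(B(x,r))\leq r^{1+\eta}$ on $K$, which fails at gap endpoints) are comparatively minor.

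The structural lesson, which the paper's construction embodies, is that the $[0,1]$ part of the spectrum must \emph{fill the gaps}, not coexist with a smooth density there. The paper builds $F=\sum_n F_n$ so that on every interval contiguous to the finite configuration $H_n$ of exceptional points, the new piece $F_n$ is a Cantor-type interpolating function whose own spectrum is already $d(h)=h$ on $[0,1]$; thus at small scale every subinterval contains such a piece, and simultaneously contains points $s_m$ near which all $F_p$ with $p<m$ are locally constant and the increments are of order $\delta_m^{2}$ on windows of width $\delta_m$, forcing $h_F(s_m)=2$. Your additive splitting with disjoint supports and a smooth density in the gaps cannot reproduce this: if you replace the smooth densities by measures with full spectrum in each gap to restore homogeneity, you lose the clean computation $h_\mu(x_n)=2$, and reconciling those two demands is exactly what the paper's iterative scheme is designed to do.
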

 Nevertheless, one can derive   a theorem equivalent to Theorem \ref{thdarboux} for the limsup exponents greater than 1 (i.e. when the liminf in \eqref{defexpmu} is replaced by a limsup) using an argument of inversion of measures, i.e. by considering the inverse measure $\mu^{-1}$ of $\mu$ defined as $\mu^{-1}([0,\mu([0,x] ])= x$.  This inversion procedure transforms liminf exponents for $\mu$ into limsup exponents for $\mu^{-1}$.  The argument is due to Mandelbrot and Riedi in \cite{MR},  and the details are left to the reader. The associated result using the upper multifractal spectrum defined as $\displaystyle \overline{d_\mu}(h)=\dim\left\{x: \limsup_{r\to 0^+} \frac{\log \mu(B(x,r))}{\log r}=h\right\}$ is:
\begin{theorem}
\label{thdarboux2}
For any non-atomic HM measure supported on $\zu$, Support($ \overline{d_\mu}) \, \cap \, [1,+
\infty)$ is necessarily an interval of  the form $[1,\alpha ]$  where $\alpha \in [1,+\infty]$.
 \end{theorem}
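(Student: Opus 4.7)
The plan is to follow the Mandelbrot--Riedi measure-inversion argument suggested in the preceding paragraph. Normalize $\mu([0,1])=1$ and define the inverse measure $\nu:=\mu^{-1}$ via the functional equation $F_\nu(F_\mu(x))=x$, where $F_\mu(x)=\mu([0,x])$. Since $\mu$ is non-atomic, $F_\mu$ is a continuous non-decreasing surjection $\zu\to\zu$, and $\nu$ is a well-defined non-atomic Borel probability measure on $\zu$, characterized by $\nu([F_\mu(a),F_\mu(b)])=b-a$.

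The central technical ingredient is the pointwise exponent duality. Fix $x_0\in(0,1)$, set $y_0:=F_\mu(x_0)$, and write $\overline h_\mu(x_0):=\limsup_{r\to 0^+}\log\mu(B(x_0,r))/\log r$ for the upper local dimension. Since the $\nu$-mass of an interval containing $y_0$ of length $s:=\mu(B(x_0,r))$ equals $2r$, standard manipulations combined with the observation that taking reciprocals of positive quantities interchanges $\limsup$ and $\liminf$ give
\begin{equation*}
\overline h_\mu(x_0)=h\ \Longleftrightarrow\ h_\nu(y_0)=1/h\quad\text{for every }h\in[1,+\infty].
\end{equation*}
Combined with the dimension-distortion estimates of Mandelbrot--Riedi, this pointwise identification promotes to the dimension-scaled identity $\dim E_\nu(1/h)=\overline{d_\mu}(h)/h$ between the corresponding level sets, and the analogous identity on any subinterval.

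The final step is to transfer the HM property. Any non-empty subinterval $U\subset\zu$ equals $F_\mu(V)$ for a non-empty $V\subset\zu$, and the dimension-scaled duality gives $\dim(E_\nu(1/h)\cap U)=\dim\{x\in V:\overline h_\mu(x)=h\}/h$. Using the HM hypothesis on $\mu$ together with the symmetric structure of the duality (which identifies the upper level sets of $\mu$ with the lower level sets of $\nu$ and vice versa), one verifies that $\nu$ satisfies the HM hypothesis of Theorem \ref{thdarboux}; this is the crux of the Mandelbrot--Riedi argument in \cite{MR}. Applying Theorem \ref{thdarboux} to the non-atomic HM measure $\nu$ yields $\mathrm{Support}(d_\nu)\cap\zu=[\gamma,1]$ for some $\gamma\in[0,1]$. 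Transferring via $h\leftrightarrow 1/h$ gives $\mathrm{Support}(\overline{d_\mu})\cap[1,+\infty)=[1,1/\gamma]$, and setting $\alpha:=1/\gamma\in[1,+\infty]$ (with the convention $1/0:=+\infty$) completes the proof. The main obstacle is the HM-transfer step, which requires careful control of how the irregular distribution function $F_\mu$ distorts Hausdorff dimension between matching upper and lower level sets; this is precisely what is handled in \cite{MR}, and we refer the reader there for the details.
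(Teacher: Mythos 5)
Your overall strategy is exactly what the paper points to: apply the Mandelbrot--Riedi inversion $\nu=\mu^{-1}$, use the pointwise exponent duality $\overline h_\mu(x_0)=1/h_\nu(F_\mu(x_0))$, and apply Theorem \ref{thdarboux} to $\nu$. The paper itself leaves the details to the reader, so in approach you are aligned with it. Two preliminary points you state or implicitly use are fine and worth making explicit: since $\mu$ is HM it must have full support (otherwise some restriction would have the trivial spectrum), hence $F_\mu$ is a strictly increasing homeomorphism and $\nu$ is indeed non-atomic; and the pointwise duality (with interval/ball discrepancies controlled on both sides) is sound.

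The genuine gap is in the sentence where you claim "one verifies that $\nu$ satisfies the HM hypothesis of Theorem \ref{thdarboux}." The HM hypothesis on $\mu$ concerns the level sets of the $\liminf$ exponent $h_\mu$. Under inversion the identity $h_\mu(x)=1/\overline h_\nu(F_\mu(x))$ holds, so HM of $\mu$ gives control, on every subinterval, of the \emph{limsup} level sets of $\nu$, not the liminf level sets. But Theorem \ref{thdarboux} (via Theorem \ref{thdarbouxb}) needs the $\liminf$ exponent of $\nu$: concretely it needs that $\{y:h_\nu(y)\le \alpha_\nu+\varepsilon\}$ is dense for $\alpha_\nu=\inf_y h_\nu(y)=1/\sup_x\overline h_\mu(x)$, i.e.\ that $\{x:\overline h_\mu(x)\ge \beta\}$ is dense whenever $\beta<\sup\overline h_\mu$. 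Nothing in the definition of HM (a statement about $h_\mu$, not $\overline h_\mu$) delivers this, and \cite{MR} addresses the inversion formula and level-set/dimension correspondence, not a transfer of homogeneity between measures. As it stands you are silently passing a $\liminf$ property through a duality that converts it into a $\limsup$ property, and then invoking a theorem that wants the $\liminf$ property again. You need an additional argument (e.g.\ a limsup analogue of Lemma \ref{*closedspectr} and of the density hypothesis of Theorem \ref{thdarbouxb}, derived from HM of $\mu$) before Theorem \ref{thdarboux} can be applied to $\nu$. The dimension-scaled identity $\dim E_\nu(1/h)=\overline{d_\mu}(h)/h$ is also asserted but not needed for the support conclusion, which only requires non-emptiness of the level sets; I would drop it rather than carry the extra burden of justifying the bilateral H\"older distortion.
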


Our second main result  deals with the prescription of multifractal spectrum of a non-HM  measure. It is known (see Proposition \ref{prop2}) that the  multifractal  spectrum of a probability measure $\mu$ always satisfies $d_\mu(h)\leq \min(h,1)$, for every $h\geq 0$. 

\sk
 
Let us introduce the functions which are the candidates to be a multifractal spectrum. 

\begin{definition}
For every function $f: \zu\to\zu\cup\{-\infty\}$,  we define  Support($f$) = $\{x: f(x) > -\oo\}$ and   Support$^*$($f$) as the  smallest interval of the form $[h,1]$ containing  Support($f$). 
\end{definition}
 This definition is analogous to the definition of the support of the multifractal spectrum of a measure or a function as defined in equation \eqref{defsupport}.
 
\begin{definition}
\label{defF}
The set   $\mathcal{F}$ consists of functions $f:[0,1]\to [0,1]\cup \{ -\oo \}$ satisfying the following:
For each $f \in \mathcal{F}$, there exists a countable family of functions $(f_n)_{n\geq 1}$,
$f_{n}:[0,+\oo]\to [0,1]\cup \{ -\oo \}$ such that

\begin{itemize}
\item
for every $n\geq 1$, Support($f_n$) is a closed, possibly degenerate interval $I_n \subset \zu$, 
\item
 $\inf_{n\geq 1}  \min (I_n) >0$,

\item
$f_n$ is constant over $I_n$,

\item
 for every $x\in I_n$, $f_n(x)\leq x$,
 
\item
for every $x\in \bigcup_{n\geq 1} I_n$, $f (x) = \sup_{n \geq 1} \,f_n(x)$. 
\end{itemize}
 \end{definition}

The set $\mathcal{F}$ contains for instance the continuous functions and  the lower semi-continuous functions (provided that they satisfy $f(x)\leq x$) supported by subintervals of $\zu$, and one can also allow functions $f_{n}$ with degenerate, one-point supports.
 We prove the following: 

\begin{theorem}
\label{th111}
For every $f\in \caf$, there is a   Borel probability measure $\mu$,  supported by $\zu$ such that the multifractal spectrum of $\mu$   satisfies:
 \begin{enumerate}
\sk
\item
for all $ h\in { Support (f)}\setminus\{1\}$, $   d_\mmm(h) =   f(h) , $
\sk\item
 $d_{\mmm}(h) = -\infty $  if $h\in \R^+\sm$Support$(f)$,
\sk \item
 the set of points $\{x \in \zu:   h_\mmm(x)  =1\}$ has Lebesgue measure 1.
   \end{enumerate}
   In particular, $ \mbox{Support\,($d_\mu$)}=$Support($f)\cup\{1\}$.
 \end{theorem}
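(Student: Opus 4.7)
\medskip

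My plan is to realise $\mmm$ as a weighted superposition
$\mmm = c_0 \la + \sum_{n\geq 1} c_n \mmm_n$,
where $\la$ denotes Lebesgue measure on $\zu$, each $\mmm_n$ is a building-block probability measure installing the step $f_n$ of the target spectrum on a separate piece of $\zu$, and the positive weights $c_n$ satisfy $c_0 + \sum_{n \geq 1} c_n = 1$. The Lebesgue term is included to enforce assertion (iii): it will force the local dimension to equal $1$ at every point outside the supports of all the $\mmm_n$.

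Fix pairwise disjoint open subintervals $(J_n)_{n\geq 1}$ of $\zu$. Inside each $J_n$ I will construct a compact set $K_n$ of Hausdorff dimension $\aaa_n$ and Lebesgue measure zero, together with a probability measure $\mmm_n$ supported by $K_n$ such that $h_{\mmm_n}(x) \in I_n$ for every $x \in K_n$ and such that for every $h \in I_n$ one has $\dim\{x \in K_n : h_{\mmm_n}(x) = h\} = \aaa_n$. This last requirement is the crux of the argument: a Bernoulli--Besicovitch measure on a Cantor set of Hausdorff dimension $\aaa_n$ produces a strictly concave multifractal spectrum, not a constant one. The remedy is an inhomogeneous multi-scale construction. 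Pick a sequence $(h_{n,j})_{j\geq 1}$ dense in $I_n$, choose for each $j$ a weight vector $\pi^{(j)}$ calibrated so that the corresponding Bernoulli-type measure on a Cantor set of Hausdorff dimension $\aaa_n$ has local dimension exactly $h_{n,j}$, and build $\mmm_n$ by alternating these weight profiles over a rapidly increasing sequence of scale blocks. A Billingsley-type frequency argument then shows that for each prescribed $h \in I_n$ one can exhibit a subset of $K_n$ of full Hausdorff dimension $\aaa_n$ on which the liminf in the definition of $h_{\mmm_n}$ equals exactly $h$.

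The disjointness of the $J_n$ then gives a clean pointwise analysis of $\mmm$. For $x \in K_n$ and $r$ small, $\mmm(B(x,r))$ is comparable to $c_n \mmm_n(B(x,r)) + 2 c_0 r$, so $h_\mmm(x) = h_{\mmm_n}(x)$ whenever $h_{\mmm_n}(x) < 1$. For $x \in \zu \setminus \bigcup_n K_n$, the separation of $x$ from every $K_m$ at small enough scales yields $\mmm(B(x,r)) = 2 c_0 r$, hence $h_\mmm(x) = 1$; since $\bigcup_n K_n$ has Lebesgue measure zero, assertion (iii) follows. For $h \in \mathrm{Support}(f) \setminus \{1\}$, the lower bound $d_\mmm(h) \geq f(h)$ comes from selecting any $n$ realising $\aaa_n = f(h)$ with $h \in I_n$, while $E_\mmm(h) \subset \bigcup_{n: h \in I_n} E_{\mmm_n}(h)$ together with the countable stability of Hausdorff dimension give the matching upper bound. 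For $h \notin \mathrm{Support}(f)$, the same inclusion is empty, so $d_\mmm(h) = -\infty$.

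The main obstacle is the construction of $\mmm_n$ whose multifractal spectrum is constantly equal to $\aaa_n$ on the entire interval $I_n$. Classical multifractal analysis of self-similar or Bernoulli measures delivers strictly concave spectra only, so an inhomogeneous scale-by-scale mixing of infinitely many different weight profiles is required, together with delicate upper bound estimates ensuring both that no spurious exponent outside $I_n$ is produced and that the dimension of the realising set is exactly $\aaa_n$ for every $h \in I_n$. The superposition argument and the separate bookkeeping for the exponent $h=1$ via the Lebesgue term are comparatively straightforward.
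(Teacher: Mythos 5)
Your superposition skeleton is essentially the same as the paper's Section 5: both arguments concatenate rescaled copies of building-block measures on pairwise disjoint subintervals of $\zu$ and use an extra term to force exponent $1$ Lebesgue--a.e. (the paper uses $(\mu+\lll)/2$, you add $c_0\lll$ directly). The structural steps you describe --- disjoint supports give $d_\mmm=\sup_n d_{\mmm_n}$, the Lebesgue term controls points outside $\bigcup_n K_n$, and countable stability gives the matching upper bound --- are all fine, modulo two small repairs: (a) if $\sup_{n:h\in I_n}\aaa_n$ is not attained you need a limiting sequence $\aaa_{n_k}\nearrow f(h)$ rather than a single $n$; (b) at any accumulation point of the intervals $J_n$ (at least one exists), the identity $\mmm(B(x,r))=2c_0r$ fails and you need a uniform modulus bound for the $\mmm_m$ analogous to estimate \eqref{minorZ} to still get $h_\mmm(x)=1$ there, as the paper does by choosing the intervals $[2^{-p},2^{-p+1}]$ and handling $0$ by a time change.

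The genuine gap is the building block, which you correctly identify as ``the crux'' but then assert rather than construct. You want a measure $\mmm_n$ carried by a Cantor set $K_n$ with $\dim K_n=\aaa_n$ and with $\dim\{x\in K_n:h_{\mmm_n}(x)=h\}=\aaa_n$ \emph{for every} $h\in I_n$ simultaneously, all at the level of $\liminf$ exponents. That is exactly the difficult part of the theorem, and the paper spends its entire Section 4 on the analogous statement (Theorem \ref{proplinearspectrum}, which produces a monotone function with an \emph{affine} spectrum $h\mapsto d(1+\hhh h)\ind^*_{[\aaa_0,\bbb_0]}(h)$ rather than a constant one; the step function $\widetilde f_n$ is then a countable supremum of such affine pieces, which is why $\caf^*$ is introduced). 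Your proposed route via an inhomogeneous scale-by-scale mixing of Bernoulli weight profiles plus a Billingsley-type frequency argument is plausible as an alternative to the paper's explicit dyadic scheme, but the sketch conceals the real work: because $h_{\mmm_n}$ is a $\liminf$, you must control $\mmm_n(B(x,r))$ at \emph{all} intermediate radii between block ends, rule out the exponent dropping below the target inside long blocks, and simultaneously carry an auxiliary ``frequency'' measure on each level set $E_{\mmm_n}(h)$ witnessing dimension exactly $\aaa_n$; none of these estimates are carried out, and they are of comparable length and delicacy to the paper's Lemma \ref{lemmadecompos} and the surrounding counting arguments. Until that lemma is actually proved, the argument does not establish the theorem.
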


The proof of Theorem \ref{th111} is somewhat classical: we concatenate measures $\mu_n$ whose spectra are (close to be) the functions $f_n$ (used in Definition  \ref{defF}). When dealing with HM measures, the result is different:

\begin{theorem}
\label{th11}
For every $f\in \caf$, there is a HM Borel probability measure $\mu$,  supported by $\zu$ such that the multifractal spectrum of $\mu$   satisfies:
 \begin{enumerate}
\sk
\item
for all $ h\in \mbox {Support}^*(f)\setminus\{1\}$, $   d_\mmm(h) =  \max(f(h),0), $
\sk\item
 $d_{\mmm}(h) = -\infty $  if $h\in \R^+\sm$Support$^*(f)$,
\sk \item
 the set of points $\{x \in \zu:   h_\mmm(x)  =1\}$ has Lebesgue measure 1.
   \end{enumerate}
   In particular, $ \mbox{Support\,($d_\mu$)}=$Support$^*$($f) $.
 \end{theorem}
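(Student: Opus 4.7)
The strategy is to upgrade the non-HM construction from Theorem \ref{th111} into a homogeneous one by distributing rescaled elementary blocks densely throughout $\zu$, then to plant isolated interpolation points in order to fill the Darboux gap forced by Theorem \ref{thdarboux}. Concretely, for each $n \geq 1$, a variant of the proof of Theorem \ref{th111} applied to the single step function $f_n \equiv c_n$ on $I_n$ yields an elementary probability measure $\nu_n$, supported on a compact $K_n \sse \zu$, such that $d_{\nu_n}$ coincides with $f_n$ on $I_n$, the exponent $1$ holds Lebesgue-almost everywhere on $K_n$, and no other exponents appear.

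I would enumerate the dyadic subintervals $(J_p)_{p\geq 1}$ of $\zu$ and, for every pair $(p,n)$, choose an affine copy $K_{p,n} \sse J_p$ of $K_n$ with contraction ratio $\lambda_{p,n}$ and weight $m_{p,n}$, the $K_{p,n}$ being pairwise disjoint with weights decaying rapidly in $p$, so that
\[
\mmm \;=\; \sum_{p,n \geq 1} m_{p,n}\, (T_{p,n})_{\#} \nu_n \;+\; \rho \cdot \mbox{Leb}_{|Z}
\]
is a probability measure, where $T_{p,n}$ is the affine map from $K_n$ onto $K_{p,n}$, $Z = \zu \sm \bigcup_{p,n} K_{p,n}$, and $\rho > 0$ is a normalizing weight. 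Affine invariance of the local dimension ensures that each point of $K_{p,n}$ inherits the exponent of its $\nu_n$-preimage, provided $(m_{p,n},\lambda_{p,n})$ are balanced so that neighboring copies and the Lebesgue background do not dominate the ball mass at small scales. Because every non-degenerate $U \sse \zu$ contains a full rescaled copy of every $\nu_n$, the HM property follows at once, together with the lower bound $d_\mmm(h) \geq f(h)$ for all $h \in \mbox{Support}(f)$. For each $h$ in a countable dense subset of $\mbox{Support}^*(f) \sm \mbox{Support}(f)$, I would then plant ``interpolation points'' $y$ obtained by alternately approaching two $K_{p,n}$'s at prescribed geometric scales (with exponents $h_1, h_2 \in \mbox{Support}(f)$ sandwiching $h$), which forces $h_\mmm(y) = h$; replicated at every dyadic scale, these points ensure $d_\mmm(h) \geq 0$ throughout $\mbox{Support}^*(f)$ while preserving homogeneity.

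The main obstacle is the matching upper bound: one must show $d_\mmm(h) \leq f(h)$ on $\mbox{Support}(f)$ and $d_\mmm(h) \leq 0$ in the gaps. The dense superposition of rescaled copies creates a genuine risk of surplus dimension from cross-scale resonances and from the interpolation construction. This is averted by taking the $\lambda_{p,n}$ sparse enough that any efficient $\delta$-cover of $E_\mmm(h)$ splits, piece by piece, into pullbacks of efficient covers of $E_{\nu_n}(h)$, whence countable stability of Hausdorff dimension yields $d_\mmm(h) \leq \sup_n f_n(h) = f(h)$. The fine tuning of $(m_{p,n},\lambda_{p,n})$ required to simultaneously (i) preserve the local dimensions of the $\nu_n$-blocks, (ii) keep the filler set at dimension zero, and (iii) maintain $\mmm$ as a probability measure of exponent $1$ Lebesgue-almost everywhere, is where the heart of the argument lies.
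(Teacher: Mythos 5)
Your overall strategy---scatter rescaled copies of elementary measures densely through $\zu$, with weights and contraction ratios tuned so that the local dimensions of the blocks survive and the residual set has dimension zero---is in the same spirit as the paper's construction. The paper achieves the density not by enumerating dyadic intervals and scattering copies inside them, but by a recursive gap-filling scheme: at step $n$, one of the longest open intervals $\widetilde L_n$ complementary to the current singularity Cantor set $\widetilde C_n$ is selected, and a copy of the next $Z_p$ is inserted (via an affine change of variable $S_n$) into a central subinterval $L_n'$ of length $2^{-n^2}|\widetilde L_n|$, scaled in amplitude by $2^{-n^2/|\widetilde L_n|}$. Because each insertion lands in a region where all previous iterates are locally affine, the singularity Cantor sets of the successive copies never interleave; this disjointness is what confines the exceptional set---the set where exponents can be altered by later insertions---to a countable union of sets of Hausdorff dimension zero. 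In your dyadic-scattering variant this confinement would have to be re-established by hand, since there is no a priori reason that the efficient covers of $E_\mmm(h)$ split cleanly into pullbacks of covers of the $E_{\nu_n}(h)$; you acknowledge this as the ``heart of the argument'' but the geometry you chose does not make it come for free.

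The genuine divergence is how you treat the gap $\mbox{Support}^*(f)\setminus\mbox{Support}(f)$. You propose to plant ``interpolation points'' explicitly, by alternately approaching two singularity blocks at prescribed geometric scales, and then to replicate them densely. This is both laborious and risky. First, $h_\mu$ is a $\liminf$ exponent: a point oscillating between an $h_1$-neighborhood and an $h_2$-neighborhood with $h_1<h<h_2$ typically inherits the \emph{smaller} exponent $h_1$, because the $\liminf$ picks out the scales at which the mass is most concentrated. Forcing $h_\mu(y)=h$ exactly requires a two-sided bound on $\mu(B(y,r))$ uniformly in $r$, not just a lower bound along a subsequence; engineering this in the presence of the Lebesgue background $\rho\cdot\mathrm{Leb}$ (which immediately gives $\mu(B(y,r))\gtrsim r$, hence $h_\mu(y)\leq 1$, but must \emph{not} push the exponent up to $1$) is delicate and unaddressed. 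Second, homogeneity requires such points to occur in every subinterval, and they must collectively have Hausdorff dimension zero while not perturbing the exponents on the $K_{p,n}$'s; this is another dimension estimate your outline does not supply.

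The paper sidesteps all of this. After showing (via the Cantor-set disjointness of the recursive insertion) that the exceptional set has dimension $0$, it notes that the limit measure satisfies the hypotheses of the Darboux-type Theorem~\ref{thdarbouxb}: the set $\{x: h_\mu(x)\leq \alpha_0+\ep\}$ is dense for every $\ep>0$ (because copies of $Z_p$ with exponents near $\alpha_0 = \inf_n \min I_n$ are inserted densely), and $h_\mu(x)\geq\alpha_0$ everywhere (from \eqref{minorZ}). Theorem~\ref{thdarbouxb} then yields $d_\mu(\beta)\geq 0$ for every $\beta\in[\alpha_0,1]$ automatically, with an uncountable and dense set of witnesses, which is exactly what is needed for the HM property; the upper bound $d_\mu(\beta)\leq 0$ in the gaps follows from the dimension-zero exceptional set. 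In short, the connectedness of the support is a structural consequence of the Darboux theorem and does not need to be manufactured pointwise. Your interpolation-point plan amounts to re-deriving a special case of Theorem~\ref{thdarbouxb} by hand, which is strictly harder and where I expect the real difficulties to surface.
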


The notation $ d_\mu(h) = \max(f(h),0)$ indicates that either $h\in Support(f)$,  which implies  $f(h)\geq 0$ and $d_\mu(h)=f(h)$, or $h\in$ Support$^*(f)\setminus$ Support($f$),  and in this second case $d_\mu(h)=0$ (except  for $h=1$, for which $d_\mu(1)=1$) (see Figure \ref{fig11}).

Observe that, recalling Theorem \ref{thdarboux}, in Theorem \ref{th11} the 
intersection of $ \mbox{Support\,($d_\mu$)}$  with $\zu$  must be an interval. This is why we introduced Support$^*$($f$), and why Theorems \ref{th111} and \ref{th11} differ.   Theorem \ref{th11} is optimal for the class of functions $\mathcal{F}$ when $ \mbox{Support\,($d_\mu$)}$ is included in $\zu$.

\begin{center}
\begin{figure}
  \includegraphics[width=5.5cm,height = 4.0cm]{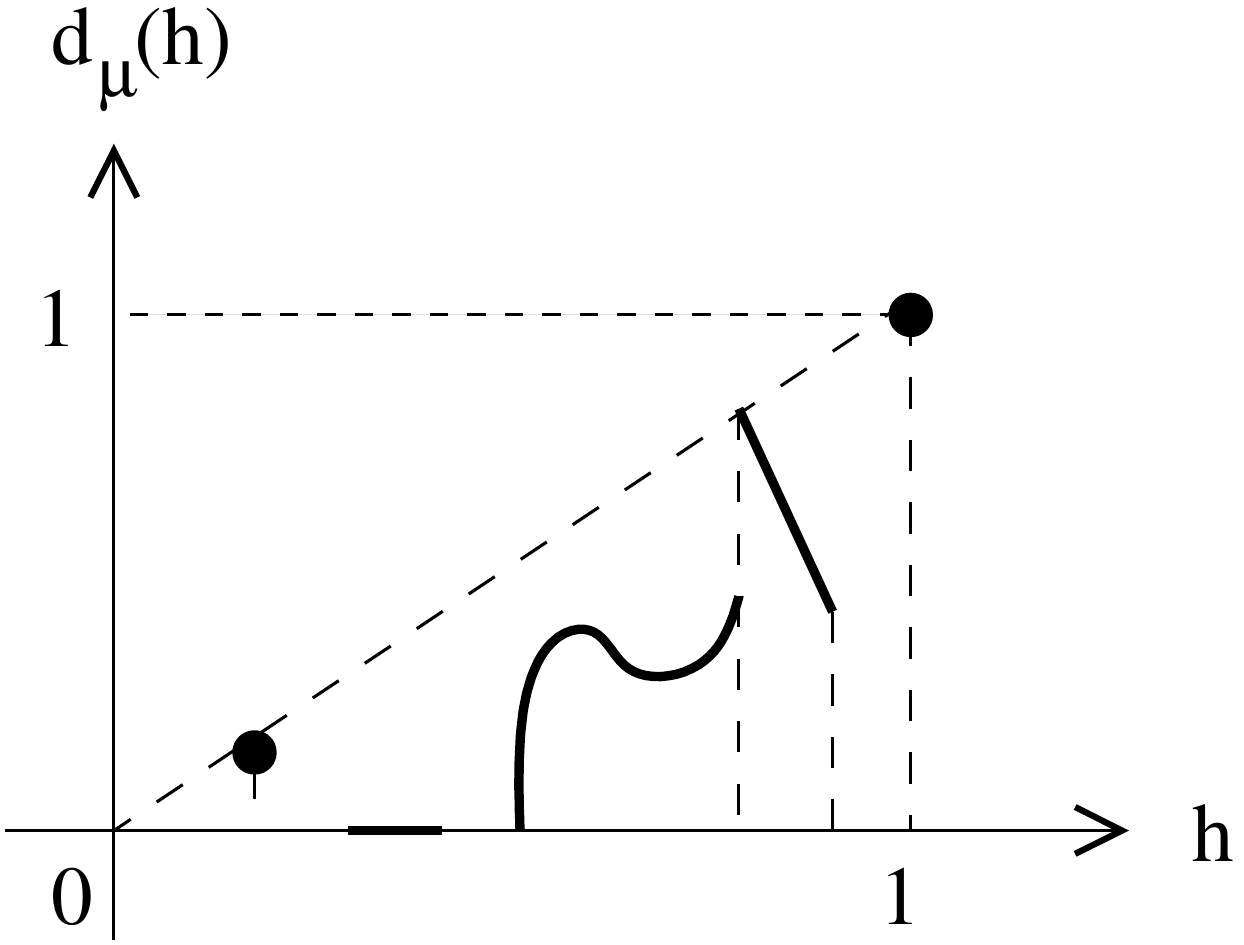}
  \includegraphics[width=5.5cm,height = 4.0cm]{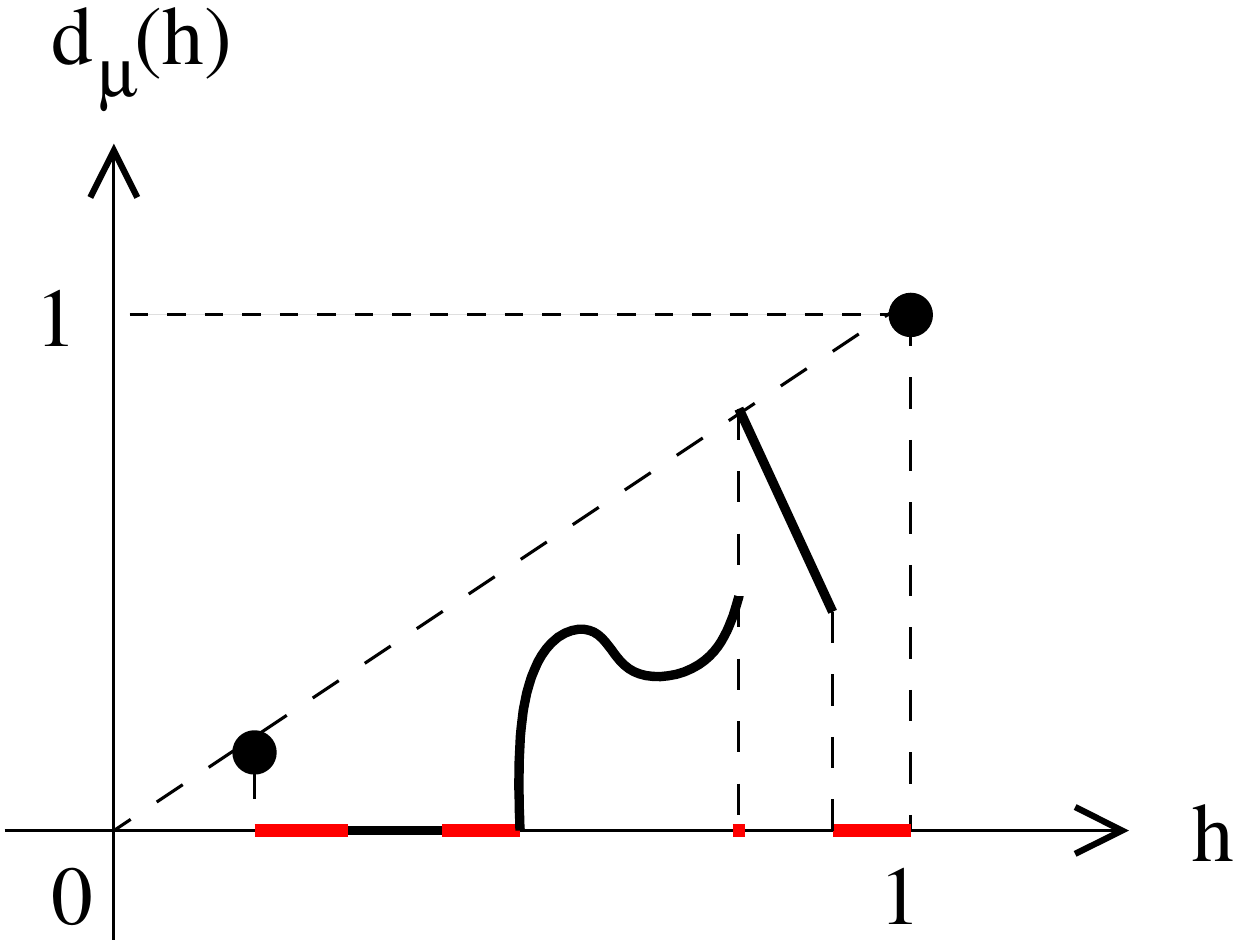}
\caption{Spectrum of a non-HM measure (left) and a HM measure (right).} \label{fig11}
\end{figure}
\end{center}

The proof of Theorem \ref{th11} is quite original. 
 The method is the application of an iterative algorithm which allows 
us to superpose various affine spectra and to create a HM measure $\mu$. Applying  this algorithm iteratively, we loose control on the  local dimensions of the limit measure $\mu$, but  only on  a zero-dimensional set  
  of exceptional
  points. It is really nice that Theorem \ref {thdarboux} asserts that this is  the normal situation (hence giving the optimality of Theorem \ref{th11}), since an uncountable set of points with new exponents   appears for HM  measures. Theorems \ref{thdarboux}, \ref{thdarbouxb} and Lemma \ref{*closedspectr} can also help us to verify that after this modification
we still have a HM measure.

\begin{remark}
Although the measures built in Theorem \ref{th11} have the same multifractal spectrum on any interval, they do not have to satisfy in general any kind of multifractal formalism (their spectrum has no reason to be concave). 
\end{remark}

\begin{remark}
\label{rk2}
The key point to prove Theorems \ref{th111} and  \ref{th11} is the explicit construction of a non-HM measure $\mu$ which has an affine spectrum, achieved in Theorem \ref{proplinearspectrum} in Section \ref{sec_cons1}.  {\bf Theorem \ref{proplinearspectrum} can be admitted at first reading, the other proofs use only the existence of such measures.}
\end{remark}

We have a similar result for the prescription of multifractal spectra of monotone functions.

\begin{theorem}
\label{th1}
For every $f\in \mathcal{F}$, there exists a  HM strictly monotone 
increasing
continuous function $Z:\zu\to\R$    satisfying:
\begin{enumerate}
\sk
\item
for all $ h\in \mbox{Support}^*(f)$, $   d_Z(h) =  \max(f(h),0), $
\sk\item
 $d_{Z}(h) = -\infty $  if $h\in \R^+\sm$Support$^*(f)$,
\sk \item
 the set of points $\{x \in \zu:   h_Z(x)  =1\}$ has Lebesgue measure 1.
 \end{enumerate}
 \end{theorem}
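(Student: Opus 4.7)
The strategy is to reduce Theorem \ref{th1} to Theorem \ref{th11} via the integration correspondence already highlighted in the introduction: a measure on $\zu$ and its cumulative distribution function carry the same pointwise regularity, provided one stays below exponent $1$ (and the boundary exponent $1$ itself is handled by the auxiliary measure $\widetilde\mu$ from Proposition \ref{homone}).

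First I would apply Theorem \ref{th11} to the given $f\in \mathcal{F}$ to obtain a HM Borel probability measure $\mu$ on $\zu$ with $d_\mu(h)=\max(f(h),0)$ on $\text{Support}^*(f)$, $d_\mu(h)=-\infty$ outside $\text{Support}^*(f)$, and $\{x : h_\mu(x)=1\}$ of Lebesgue measure $1$. I would then take $\widetilde\mu$ as in Proposition \ref{homone}, with the additional property (which can be arranged, e.g. by mixing with Lebesgue measure, whose primitive has Hölder exponent $1$ everywhere) that $\widetilde\mu$ charges every non-degenerate subinterval of $\zu$. Define
\begin{equation*}
Z(x) \;=\; (\mu+\widetilde\mu)([0,x]).
\end{equation*}
Since $\widetilde\mu$ is non-atomic and strictly positive on open intervals, $Z$ is continuous and strictly monotone increasing on $\zu$.

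The core technical step is to verify $h_Z(x)=h_\mu(x)$ for every $x\in\zu$. If $h_\mu(x)<1$, then $h_{\mu+\widetilde\mu}(x)=\min(h_\mu(x),1)=h_\mu(x)<1$, and formula \eqref{defexpZ}, applicable because the exponent is less than $1$, yields $h_Z(x)=h_{\mu+\widetilde\mu}(x)=h_\mu(x)$. If $h_\mu(x)=1$, then $h_{F_\mu}(x)\geq 1$, and combining with $h_{F_{\widetilde\mu}}(x)=1$ gives $h_Z(x)=1$: this is exactly the case where the polynomial in \eqref{defpoint} could cause a mismatch, and the whole point of adding $\widetilde\mu$ is to force equality at this boundary exponent. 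Consequently $E_Z(h)=E_\mu(h)$ for every $h\in\zu$, so $d_Z=d_\mu$ satisfies (i) and (ii), and $\{x:h_Z(x)=1\}=\{x:h_\mu(x)=1\}$ has Lebesgue measure $1$, giving (iii).

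Finally, the HM property transfers directly: for any non-degenerate subinterval $U\subset\zu$ and any $h\geq 0$, $\dim\{x\in U:h_Z(x)=h\}=\dim\{x\in U:h_\mu(x)=h\}=d_\mu(h)=d_Z(h)$, the middle equality coming from the HM-ness of $\mu$ provided by Theorem \ref{th11}. The main obstacle, as expected, lies only in the careful treatment of the boundary exponent $h=1$; this is handled in a completely standard way via $\widetilde\mu$, so once Theorem \ref{th11} is in hand the result follows with no further combinatorial construction.
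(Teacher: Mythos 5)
Your reduction runs in the wrong direction and is circular inside this paper. The paper proves Theorem \ref{th1} \emph{first}, via the iterative construction of Section \ref{sec_cons3}: the monofractal function of Proposition \ref{homone} is fed in from the very first step as $Y_1 = Z_0 + Z_1$, and the limit $Z=\lim Y_n$ is shown to satisfy (i)--(iii) directly. Theorem \ref{th11} is then obtained as a one-line corollary by passing to the measure $\mu((a,b)) = (Z(b)-Z(a))/(Z(1)-Z(0))$, exactly as the paper remarks after Proposition \ref{homone}. Your proposal invokes Theorem \ref{th11} as a black box to produce $\mu$ and then takes its primitive; since the paper's only route to Theorem \ref{th11} passes through Theorem \ref{th1}, this argument begs the question. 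To make it stand one would need to supply an independent proof of Theorem \ref{th11} (which is possible, e.g.\ by running Section \ref{sec_cons3} without the $Z_0$ term and then mixing with Lebesgue measure, but that is a separate construction you would have to carry out).

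There is also a genuine technical gap at the boundary exponent, even granting Theorem \ref{th11}. You claim that $h_{F_\mu}(x)\geq 1$ together with $h_{F_{\widetilde\mu}}(x)=1$ forces $h_{F_\mu+F_{\widetilde\mu}}(x)=1$, but this fails for general monotone functions when both exponents equal $1$: if $F_\mu(x+u)-F_\mu(x)=c_1u+h(u)$ and $F_{\widetilde\mu}(x+u)-F_{\widetilde\mu}(x)=c_2u-h(u)$ with $h$ Lipschitz (so both pieces remain monotone) but not in $C^{1+\ep}_0$ for any $\ep>0$, then each summand has pointwise \ho exponent exactly $1$ while the sum is linear, hence has infinite exponent. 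The paper avoids this by exploiting extra structure from Theorem \ref{proplinearspectrum}: off the Cantor set $\mathcal{C}$, each $Z_n$ is \emph{locally affine}, so adding $Z_0$ changes the local behaviour only by a genuine polynomial, which leaves the exponent of $Z_0$ equal to $1$. When Theorem \ref{th11} is treated as a black box, this local-affineness is unavailable, and your treatment of the case $h_\mu(x)=1$ does not go through without an additional argument (for instance a Baire-type choice of the coefficient multiplying $\widetilde\mu$, or a restatement of Theorem \ref{th11} recording the structural information actually produced by the construction).
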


\mk

There is a subtle difference between Theorems \ref{th1} and \ref{th11} concerning the exponents larger   than 1.
This difference is due to the fact that \eqref{defpoint} eliminates the polynomial trends, while \eqref{defexpmu} does not: hence if a monotone function $Z$ is exactly linear with positive slope around a point $x$, one has $h_Z(x)=+\infty$, whilst for the corresponding measure $\mu$ (the derivative of $F$) one has $h_\mu(x)=1$.
Hence, to prove Theorem \ref{th1}, it is enough to   get rid of the points  $\{x \in \zu:  h_Z(x)  >1 \}$ by adding the
function constructed in the following proposition.

\begin{proposition}\label{homone}
There exists $Z:[0,1]\to[0,1]$ a
strictly monotone increasing HM function with $h_Z(x)  =1$ for all $x\in [0,1]$. 
\end{proposition}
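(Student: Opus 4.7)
The plan is to construct $Z$ as the integral of a carefully chosen bounded positive function whose local averages oscillate at every point. Specifically, I would define
\[
Z(x) = \frac{1}{C}\int_0^x w(t)\,\dif t,
\]
where $w:[0,1] \to [1,2]$ is measurable and $C = \int_0^1 w$. Since $w$ is bounded away from $0$ and $\infty$, $Z$ is automatically strictly monotone increasing, continuous, Lipschitz, and a bijection from $[0,1]$ onto $[0,1]$.

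The crucial step is to construct $w$ (say, of the form $w = 1 + \I_A$ for a suitable Borel set $A$) so that for every $x_0 \in [0,1]$, the running averages $\tfrac{1}{h}\int_{x_0}^{x_0+h} w(t)\,\dif t$ fail to converge as $h \to 0^+$. I would do this by a multi-scale dyadic construction: fix $0 < \alpha < \beta < 1$, and build $A$ inductively so that on every dyadic interval of level $2n$ the density of $A$ is forced to be approximately $\alpha$, while on every dyadic interval of level $2n+1$ it is forced to be approximately $\beta$. By carefully choosing the sub-intervals at each stage (a standard Cantor-type arrangement), one ensures that for \emph{every} $x_0$ and every $n$, the average of $w$ over the interval $[x_0, x_0 + 2^{-n}]$ lies close to $1+\alpha$ if $n$ is even and close to $1+\beta$ if $n$ is odd, giving the required oscillation simultaneously at every base point.

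With this $w$, the pointwise regularity is then immediate. Since $Z$ is Lipschitz, $h_Z(x_0) \ge 1$ everywhere. Suppose for contradiction that $h_Z(x_0) > 1$ for some $x_0$: then $Z \in C^{1+\eee}_{x_0}$ for some $\eee > 0$, so there is an affine polynomial $P(u) = Z(x_0) + a u$ with
\[
|Z(y) - Z(x_0) - a(y-x_0)| \le C'|y-x_0|^{1+\eee}.
\]
Dividing by $y - x_0$ and recalling the definition of $Z$, this forces $\tfrac{1}{y-x_0}\int_{x_0}^y w(t)\,\dif t \to a C$ with polynomial rate $O(|y-x_0|^\eee)$, contradicting the oscillation property of $w$ at $x_0$. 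Hence $h_Z(x_0) = 1$ for every $x_0 \in [0,1]$, and the HM property is trivial: the level set $\{x : h_Z(x)=1\}$ equals every subinterval $U \subset [0,1]$ so has dimension $1$ in $U$, while all other level sets are empty.

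The main obstacle is the construction of $w$: one must arrange the density oscillation to hold \emph{at every} $x_0 \in [0,1]$, including dyadic rationals lying on the boundary of many dyadic intervals simultaneously. This is handled by a slightly more robust construction in which, at each scale, $A$ is distributed so that the density on \emph{any} sub-interval (not only dyadic ones) of the appropriate length is close to the target value; a standard pigeonhole-style argument on the nested dyadic intervals containing $x_0$ then yields the oscillation for arbitrary $x_0$.
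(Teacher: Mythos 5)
There is a genuine gap in the construction of $w$. You want $w = 1 + \mathbf{1}_A$ such that for \emph{every} $x_0 \in [0,1]$ the one-sided densities $\lambda(A\cap[x_0,x_0+2^{-n}])/2^{-n}$ alternate between values near $\alpha$ (for $n$ even) and near $\beta$ (for $n$ odd), with $\alpha\neq\beta$. Such a set $A$ cannot exist. An elementary obstruction: the density of $A$ in $[x_0,x_0+2^{-n}]$ is the average of its densities on the two halves $[x_0,x_0+2^{-(n+1)}]$ and $[x_0+2^{-(n+1)},x_0+2^{-n}]$, each of length $2^{-(n+1)}$; if both halves have density near $\beta$, then so does their union, not density near $\alpha$, so the prescription contradicts itself at consecutive scales. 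More fundamentally, the one-sided Lebesgue differentiation theorem gives $\lim_{h\to0^+}\lambda(A\cap(x,x+h))/h = \mathbf{1}_A(x)$ for almost every $x$, so the density converges to $0$ or $1$ on a set of full measure; it cannot oscillate strictly inside $(0,1)$ at every point. Your final contradiction step (dividing the $C^{1+\eee}_{x_0}$ estimate by $y-x_0$ and comparing to the running average of $w$) is sound, but the object it requires does not exist, and no ``more robust'' rearrangement of $A$ can evade these two obstructions.

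The paper keeps your basic strategy --- write $Z(x)=\int_0^x G(t)\,\dif t$ with $G$ bounded and show that $C^{1+\eee}_{x_0}$-regularity would force the local linearization error of $Z$ to be too small --- but makes $G$ \emph{continuous} rather than an indicator: $G(x)=\sum_{n\geq1}a_n g(b_n x)$ with a trapezoidal $4$-periodic profile $g$ and lacunary parameters $(a_n,b_n)$. Continuity of $G$ is compatible with Lebesgue differentiation (the running averages do converge, to $G(x_0)=Z'(x_0)$, and indeed $Z\in C^1$, giving $h_Z\geq1$); what the lacunary construction controls is the \emph{rate} of that convergence, which it forces to be slower than every power of $|x_1-x_0|$, uniformly in $x_0$. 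A sub-polynomial rate is all that is needed to conclude $h_Z(x_0)\leq1$. So the fix for your approach is to trade non-convergence of the averages for sub-polynomial convergence: drop the indicator ansatz and build a continuous density $w$ that is, in the appropriate sense, nowhere H\"older --- which is precisely what the paper's $G$ is.
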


It is also clear that Theorem \ref{th1} implies Theorem \ref{th11} if we consider
the measure $\mmm$ for which $\mmm((a,b))=(Z(b)-Z(a))/(Z(1)-Z(0))$,
for all $0\leq a<b\leq 1.$

\sk

Our final result concerns the prescription of the spectrum of HM non-monotone functions, which are more "flexible" than HM monotone functions. Combining   Theorem \ref{th1} with wavelet methods, we obtain the following result.

\begin{theorem}
\label{th3}
Let $0<\alpha <\beta $ be two real numbers,    and consider $f\in \mathcal{F}$.
There exists a   continuous  HM  function $Z:\zu\to\zu$ satisfying:
 \begin{enumerate}
\item
for all $ h\in  \left( \alpha+(\beta-\alpha){Support}^*(f) \right)\setminus \{\beta\}$, $ \displaystyle d_Z(h)=   \max\left(f\left(\frac{h-\alpha}{\beta-\alpha}\right),0\right) $.
\sk\item
 $d_{Z}(h) = -\infty $  if $h\notin   \alpha+(\beta-\alpha){Support}^*(f) $,
\sk \item
 the set of points $\{x \in \zu:   h_Z(x)  =\beta\}$ has Lebesgue measure 1. 
  \end{enumerate}
   \end{theorem}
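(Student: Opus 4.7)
The plan is to construct $Z$ by a wavelet-based ``exponent stretch'' applied to the HM monotone function provided by Theorem \ref{th1}, following the strategy of Jaffard in \cite{J2}. First, I apply Theorem \ref{th1} to $f$ to obtain a HM strictly increasing continuous $G:\zu\to\zu$ with $d_G(h) = \max(f(h),0)$ on $\mbox{Support}^*(f)$, $d_G(h) = -\infty$ elsewhere, and $h_G(x) = 1$ Lebesgue-a.e. Let $h^* = \min \mbox{Support}^*(f) > 0$ (positivity coming from the condition $\inf_n \min(I_n) > 0$ in Definition \ref{defF}), so that $h_G(x) \geq h^*$ for every $x \in \zu$.

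Next, I fix a compactly supported Daubechies wavelet $\psi$ with $N$ vanishing moments and $N$ continuous derivatives, where $N > \beta + 2$. With the $L^\infty$ normalization $\psi_{j,k}(x) = \psi(2^j x - k)$, expand $G = \sum_{j,k} c_{j,k}\,\psi_{j,k}$, and define
$$
Z(x) = \sum_{j\geq 0}\sum_k d_{j,k}\,\psi_{j,k}(x), \qquad d_{j,k} = \mbox{sgn}(c_{j,k})\cdot 2^{-j\alpha}\cdot |c_{j,k}|^{\beta-\alpha}.
$$
The uniform bound $|c_{j,k}|\leq C\,2^{-jh^*}$ (valid because $G$ has uniform \ho exponent at least $h^*$) yields $|d_{j,k}|\leq C'\,2^{-j(\alpha + (\beta-\alpha)h^*)}$, which ensures convergence to a continuous, uniformly \ho function; a final rescaling brings $Z$ into $\zu$.

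The crux is the pointwise identity $h_Z(x_0) = \alpha + (\beta-\alpha)\,h_G(x_0)$ for every $x_0 \in \zu$. For the upper bound, the definition of $h_G(x_0)$ as a liminf gives, for every $\varepsilon > 0$, arbitrarily large $j$ and indices $k_j$ with $|k_j/2^j - x_0| \leq 2^{-j}$ satisfying $|c_{j,k_j}| \geq 2^{-j(h_G(x_0) + \varepsilon)}$, hence $|d_{j,k_j}| \geq 2^{-j(\alpha + (\beta-\alpha)(h_G(x_0)+\varepsilon))}$ and $h_Z(x_0) \leq \alpha + (\beta-\alpha)h_G(x_0)$. For the lower bound, Jaffard's wavelet-leader criterion applied to $G$ gives, for any $\eta > 0$ and $j$ large, $|c_{j',k}| \leq 2^{-j'(h_G(x_0) - \eta)}$ for all $(j',k)$ with $j'\geq j$ and $|k/2^{j'} - x_0| \leq C_0\,2^{-j'}$; the power-law transformation then yields $|d_{j',k}| \leq 2^{-j'(\alpha + (\beta-\alpha)(h_G(x_0)-\eta))}$ on the same cone, and Jaffard's converse criterion (applicable since candidate exponents are at most $\beta < N$) gives $h_Z(x_0) \geq \alpha + (\beta-\alpha)(h_G(x_0)-\eta)$. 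Letting $\eta\to 0$ concludes. Items (1) and (2) then follow from the level-set identity $E_Z(\alpha + (\beta-\alpha)h) = E_G(h)$; item (3) follows since $h_G = 1$ Lebesgue-a.e. The HM property of $Z$ transfers directly from that of $G$: for any non-degenerate subinterval $J\subset\zu$, $\dim(E_Z(h)\cap J) = \dim(E_G((h-\alpha)/(\beta-\alpha))\cap J) = d_G((h-\alpha)/(\beta-\alpha)) = d_Z(h)$.

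The main technical obstacle is the lower-bound step. Jaffard's wavelet criterion requires that polynomial trends in the Taylor expansion of $Z$ at $x_0$ not pollute the relation between $h_Z(x_0)$ and the wavelet-coefficient decay, which is why we take $N > \beta + 2$ vanishing moments and derivatives. A secondary subtlety is that the nonlinear coefficient map $c \mapsto \mbox{sgn}(c)|c|^{\beta-\alpha}$ be monotone in $|c|$ so as to preserve the cone estimates, which is immediate because $\beta > \alpha$. One should also verify that no exceptional locus of $x_0$ where $c_{j,k}$ vanishes along a subsequence can artificially raise the exponent of $Z$; this is handled by noting that the level-set decomposition above uses only the wavelet-leader liminf, which is controlled by the same cone arguments for both $G$ and $Z$.
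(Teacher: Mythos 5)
Your proposal diverges from the paper's proof in a way that introduces a genuine gap. The paper invokes Theorem~\ref{thbs} from \cite{BS1}, which builds the wavelet coefficients of the new function \emph{directly from the measure values} $\mu(I_{j,k})$, setting $d_{j,k}=2^{-j\alpha}\mu(I_{j,k})^{\beta-\alpha}$. You instead start from the wavelet coefficients $c_{j,k}$ of the monotone function $G$ and apply the nonlinear map $c\mapsto \operatorname{sgn}(c)2^{-j\alpha}|c|^{\beta-\alpha}$. These are not the same, and the difference is decisive: $\mu(I_{j,k})$ is monotone under nesting and satisfies $\mu(I_{j,k})\sim 2^{-jh_\mu(x)}$ by the very definition of the local dimension, whereas the wavelet coefficients of $G$ are oscillatory objects subject to cancellations, and are only bounded \emph{above} by $\mu$ of the wavelet support.

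Concretely, the upper-bound step of your argument is false: from $h_G(x_0)=h$ you cannot deduce that for every $\varepsilon>0$ there exist arbitrarily large $j$ and $k_j$ in the thin cone $|k_j2^{-j}-x_0|\leq 2^{-j}$ with $|c_{j,k_j}|\geq 2^{-j(h+\varepsilon)}$. Pointwise \ho irregularity does not give lower bounds on individual wavelet coefficients (nor on coefficients in a thin cone); the failure of $C^{h+\varepsilon}$ regularity may be carried by leaders at scales $j'\gg j$ whose position lies under the same leader cube. Because of the scale-dependent prefactor $2^{-j\alpha}$ in your transformed coefficients, a large $c_{j',k'}$ at scale $j'\gg j$ does not translate into a large $d$-leader at scale $j$, so the chain $h_G\leq h \Rightarrow$ some large $c \Rightarrow$ some large $d \Rightarrow h_Z\leq \alpha+(\beta-\alpha)h$ breaks. (The lower bound also suffers a logarithmic loss in Jaffard's converse criterion, preventing an exact identity.) This is precisely the difficulty that Theorem~\ref{thbs} circumvents by working with $\mu(I_{j,k})$ rather than with wavelet coefficients of $F_\mu$, so you should replace your construction by the one from \cite{BS1} as the paper does, applied to the measure $\mu$ furnished by Theorem~\ref{th11}.
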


The fact that the functions  $Z$ constructed in Theorem \ref{th3} are non-monotone allows one to take any support for the multifractal spectrum of $Z$. Theorem \ref{th3} is certainly far from being optimal, and the most general form of the multifractal spectrum of a function is an open issue. This leads to some open questions:

\mk

{\bf Question 1:} How to characterize and to prescribe local dimensions mapping $h\mapsto d_\mu(h)$ of probability measures $\mu$?  of HM probability measures $\mu$? 

 \mk

{\bf Question 2:} How to prescribe the multifractal spectrum with exponents larger than one for measures? Same questions for a HM spectrum?

\mk

{\bf Question 3:} Characterize  the most general form of the multifractal spectrum of a function, or of a measure? 
By Theorem \ref{thdarboux} it is not the  same 
  for functions and measures. Can one state something more precise?

\mk

{\bf Question 4:} Characterize  the most general form of the multifractal spectrum of a function, or of a measure satisfying a multifractal formalism?

\mk

{\bf Question 5:} What about  higher dimensional results?

\mk

The paper is organized as follows. 

Section \ref{sec_prel} contains preliminary results, and the proof of Lemma \ref{lem0}.

 In Section \ref {sec_prel2}, we prove  the Darboux Theorem \ref{thdarboux} for HM measures, using a maximal inequality result (Proposition \ref{*maxin}).

  Section \ref{sec_cons1} presents the construction of a  non-HM monotone function with an affine  increasing spectrum. As explained in Remark \ref{rk2},    the proof of  Theorem \ref{proplinearspectrum} can be omitted at first reading. Using this construction, we prove Theorem \ref{th111} in   Section \ref{sec_cons11}.
  
   In Section \ref{sec_cons3}, we  develop an iterative algorithm which creates a sequence of monotone functions converging  to a  HM  monotone function according to the constraints of Theorem  \ref{th11}.    
  
Using wavelet methods, non-monotone HM functions with    spectra  belonging to a dilated and translated version of the  function set $\mathcal{F}$ are built in Section \ref{sec_cons4}.

  Finally, the last Sections  \ref{sec_noncontinuous} and \ref{sechomone} contain the constructions of  a HM
measure with a gap in $ \mbox{Support\,($d_\mu$)}$ for exponents larger than one (Proposition \ref{*spectrgap}), and   of an 
increasing HM (monofractal) function  with $\ds h_{Z}(x)=1$ for all
$x\in [0,1].$ This yields Theorem \ref{th1}.

\section{Notation and preliminary results}
\label{sec_prel}

 The open ball centered at $x\in \R$ and of radius $r$ is denoted by $B(x,r)$. By $\overline{B(x,r)}$ we denote the closed ball.

 For a set $A\subset \R^d$
we denote its diameter by $|A|$ and by $\lll(A)$ its $d$-dimensional Lebesgue measure.  int($A)$ stands for the (open) interior of $A$.
 
The sum of two non-empty sets is  $A+B=\{ a+b: a\in A,\ b\in B \}$ for $A,B\sse \R^d$.

We refer to \cite{Fa1,F2,mattila} for the standard definition of the Hausdorff measure $\mathcal{H}^s(E)$ and Hausdorff dimension $\dim (E) $ of a set $E$. 

We will use the level sets $E_\mu({h}) $ of the \ho exponents \eqref{defeh}, but also the following  sets related to the \ho exponents:
\begin{equation}
E_Z^{\leq}(h) = \{x\in \zu: h_Z(x) \leq h\} \ \ \mbox{ and } \ \ E_\mu^{\leq}(h) = \{x\in \zu: h_\mu(x) \leq h\}.
\end{equation}


These sets arise regularly in our proofs. Standard results on  multifractal spectra of monotone functions and measures give upper bounds for their Hausdorff dimension.

\begin{proposition} {\cite{BMP}}
\label{prop2}
Let $\mu$ be a Borel probability measure on $\zu$. 

 \noindent
 For every $h\in \zu$, $\dim E_\mu^{\leq}(h) \leq h$.

 \noindent
 In particular, for every $h\in \zu$, $d_\mu(h) = \dim E_\mu({h}) \leq h$.
 
 \noindent
 The same holds for $ E_Z^{\leq}(h)$ and $ E_Z({h})$ for any monotone function $Z:\zu\to\R$.   
 \end{proposition}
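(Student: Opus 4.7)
The plan is to prove the first inequality $\dim E_\mu^{\leq}(h) \leq h$ by a standard Vitali-type covering argument, and then deduce the remaining assertions. Since $E_\mu(h) \subset E_\mu^{\leq}(h)$, the bound on $d_\mu(h)$ follows immediately. For the monotone function case, one associates to $Z$ the Lebesgue–Stieltjes measure $\mu_Z$ defined by $\mu_Z((a,b]) = Z(b) - Z(a)$, and observes that $h_{\mu_Z}(x) = h_Z(x)$ whenever $h_Z(x) \leq 1$ (as already noted in the text, this follows by comparing \eqref{defexpmu} with \eqref{defexpZ}); the result for $Z$ then reduces to the result for $\mu_Z$ restricted to exponents in $[0,1]$.

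To prove $\dim E_\mu^{\leq}(h) \leq h$, I would fix $\varepsilon > 0$ and $\delta > 0$, and unwind the definition of the local dimension: for every $x \in E_\mu^{\leq}(h)$, since $\liminf_{r \to 0^+} \log\mu(B(x,r))/\log r \leq h$, there exist arbitrarily small radii $r_x < \delta/10$ for which $\log\mu(B(x,r_x))/\log r_x \leq h+\varepsilon$. Because $\log r_x < 0$, this rearranges to the key estimate
\begin{equation*}
\mu(B(x,r_x)) \,\geq\, r_x^{\,h+\varepsilon}.
\end{equation*}
Thus $E_\mu^{\leq}(h)$ is covered by a family $\mathcal{B} = \{B(x,r_x) : x \in E_\mu^{\leq}(h)\}$ of balls of diameter less than $\delta/5$, each of which carries relatively large $\mu$-mass.

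Next I would invoke the Vitali $5r$-covering lemma to extract a countable, pairwise disjoint subfamily $\{B(x_i,r_i)\}_{i \geq 1} \subset \mathcal{B}$ such that $E_\mu^{\leq}(h) \subset \bigcup_i B(x_i, 5r_i)$. The crucial bookkeeping is then
\begin{equation*}
\mathcal{H}^{h+\varepsilon}_\delta\bigl(E_\mu^{\leq}(h)\bigr) \,\leq\, \sum_{i\geq 1} (10 r_i)^{h+\varepsilon} \,\leq\, 10^{h+\varepsilon} \sum_{i\geq 1} \mu(B(x_i,r_i)) \,\leq\, 10^{h+\varepsilon}\,\mu([0,1]) \,=\, 10^{h+\varepsilon},
\end{equation*}
where the second inequality uses the pointwise mass estimate and the third uses disjointness of the balls together with $\mu$ being a probability measure. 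Letting $\delta \to 0^+$ gives $\mathcal{H}^{h+\varepsilon}(E_\mu^{\leq}(h)) \leq 10^{h+\varepsilon} < \infty$, hence $\dim E_\mu^{\leq}(h) \leq h+\varepsilon$. Taking $\varepsilon \to 0^+$ finishes the proof.

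The argument is essentially routine; no step is a real obstacle. The only delicate point worth flagging is the switch from $\limsup/\liminf$ inequalities to honest pointwise estimates along a good sequence of radii, and the correct handling of signs when $\log r < 0$. Everything else is just disjointification and the trivial bound $\sum \mu(B_i) \leq 1$.
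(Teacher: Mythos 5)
The paper does not prove Proposition \ref{prop2}; it is stated with a citation to \cite{BMP} and no argument is given. There is therefore nothing in the paper to compare your proof against, and the review must be on the merits of your argument alone.

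Your proof is correct. The core step --- from $\liminf_{r\to 0^+}\log\mu(B(x,r))/\log r \leq h$, extract radii $r_x<\delta/10$ with $\mu(B(x,r_x))\geq r_x^{h+\varepsilon}$, apply the Vitali $5r$-covering lemma, and bound the $\mathcal{H}^{h+\varepsilon}_\delta$-sum by $10^{h+\varepsilon}\sum_i\mu(B(x_i,r_i))\leq 10^{h+\varepsilon}$ using disjointness and $\mu(\zu)=1$ --- is the standard proof of this classical estimate, essentially the one used by Brown--Michon--Peyri\`ere and textbook sources, and you handle the signs and the $\delta\to 0$, $\varepsilon\to 0$ limits correctly. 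The reduction of the monotone-function case to the measure case is also fine: for $\alpha<1$ one checks (via \eqref{defexpmu} and the definition \eqref{defpoint}, in which the polynomial is necessarily constant) that $h_{\mu_Z}(x)<\alpha$ and $h_Z(x)<\alpha$ are equivalent, so $E_Z^{\leq}(h)\subset E_{\mu_Z}^{\leq}(h')$ for any $h<h'<1$, and the case $h=1$ is vacuous since $E_Z^{\leq}(1)\subset\zu$. It would be slightly cleaner to spell out this inclusion instead of invoking ``$h_{\mu_Z}(x)=h_Z(x)$ whenever $h_Z(x)\leq 1$'', which is true but not literally what the paper's remark after \eqref{defexpZ} states (the paper phrases it in terms of $h_\mu<1$); still, the two formulations coincide and the deduction is sound.
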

 

From this proposition one deduces  Lemma \ref{lem0}.

\begin{proof}[Proof of Lemma \ref{lem0}] 
Assume that the mapping $x\mapsto h_\mu(x)$ is continuous, and that $h_\mu(x_0)\neq d$ for an $\xo \in \OOO$. 
By continuity, for some constants $0<\ep<M$, one has  $ \ep \leq |h_\mu(x) - d| \leq M$ for all $x$ in  an open ball $B\sse \OOO$  around $x_0$ such that $\mu(B)>0$.

If $h_\mu(x)\leq d-\ep$ when $x\in B$, then $\dim_H(\{x\in B: h_\mu(x)\leq d-\ep\}) =d$, which is impossible by Propostion \ref{prop2}.

If $h_\mu(x) \geq d+\ep$ when $x\in B$, then the argument is as follows. Fix $\eta>0$. For every $x\in B$, there exists $0<r_x <\eta$ such that  $B(x,r_x) \subset B$ and $\mu(B(x,r_x)) \leq |B(x,r_x)|^{d+\ep/2} $. Hence, $\{B(x,r_x)\}_{x\in B}$ forms a covering of $B$ by balls centered at points of $B$. By Besicovitch's 
covering theorem, there is an integer $Q'(d)$ depending only on the dimension $d$ such that one can find $Q'(d)$  families $\mathcal{F}_i$,  $i=1,..., Q'(d)$ of disjoint balls amongst the balls $B(x,r_x)$ such that
$$B\subset \bigcup_{i=1,..., Q'(d)} \ \bigcup_{B'  \in \mathcal{F}_i} B'.$$
All the balls within one $\mathcal{F}_i$ are disjoint, hence (using that   $\lambda (B') =   C_d2^{-d} |B'|^d$  for any ball $B'\subset \R^d $, where $C_d$ is  the volume of the unit ball in $\R^d$) 
\begin{eqnarray*}
  \mu (B)  & \leq  &  \sum_{i=1}^{Q'(d)}  \mu(\bigcup_{B'  \in \mathcal{F}_i}  B') = \sum_{i=1}^{Q'(d)}   \sum_{B'  \in \mathcal{F}_i}  \mu(B') \leq    \sum_{i=1}^{Q'(d)}  \sum  _{B'  \in \mathcal{F}_i}  |B'| ^{d+\ep/2 } \\
  & \leq &   \eta^{\ep/2} \sum_{i=1}^{Q'(d)}  \sum  _{B'  \in \mathcal{F}_i}  |B'| ^{d }  \leq \frac{\eta^{\ep/2}2^d}{C_d} \sum_{i=1}^{Q'(d)}  \sum  _{B'  \in \mathcal{F}_i}  \lambda(B' )  \leq   \frac{\eta^{\ep/2}2^d}{C_d}  Q'(d) \lambda(B).
\end{eqnarray*}
Letting $\eta$ tend to zero we obtain $\mu(B)=0$, which is impossible.\end{proof}

\section{The Darboux theorem for  HM  measures}
\label{sec_prel2}

\subsection{A maximal inequality}

We need a variant of the maximal inequality used for the deduction
of the Hardy--Littlewood maximal inequality. In the sequel, we  always consider the case $d=1$, but the next proposition holds in any finite dimension, $d$.
\begin{proposition}\label{*maxin}
Suppose that $I\sse [0,1]^d$ is an open ball, $\mmm$ is a measure on $[0,1]^d$, $0<\bbb\leq 1$
and set
$$\mcsib \, \mmm(x) :=  \sup\Big \{ \frac{\mmm(B(x,r))}{(2r)^{d\bbb}}: r>0,\ B(x,r)\sse I \Big \}.$$
Then, there exists a constant $Q(d)>0$  (depending on the dimension $d$  only) such that  for all $t>0$, the set  $\mcst = \{ x \in \zu^d:\mcsib \, \mmm(x)>t \}$ satisfies  
\begin{equation*}
\lll(\mcst)   \leq \frac{Q(d) \, \mmm(I)  \, |I|^{d(1-\bbb)}}{t}.
\end{equation*}
\end{proposition}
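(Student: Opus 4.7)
\textbf{Proof plan for Proposition \ref{*maxin}.}

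The argument is a straightforward adaptation of the proof of the Hardy--Littlewood maximal inequality, with the twist that the exponent is $d\beta$ rather than $d$, which forces the factor $|I|^{d(1-\beta)}$ to appear. The ingredients are (i) the defining inequality at each point of $M^*_t$, (ii) the trivial bound $2r \leq |I|$ for any ball $B(x,r)\subset I$, and (iii) Besicovitch's covering theorem, exactly as it is already used in the proof of Lemma \ref{lem0} in this paper.

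First, for every $x \in \mcst$, by the definition of $\mcsib\,\mmm$, there exists $r_x>0$ with $B(x,r_x)\subset I$ and
$$\mmm(B(x,r_x)) > t \, (2r_x)^{d\bbb}.$$
Since $B(x,r_x)\subset I$ and $I$ is a ball, one has $2r_x \leq |I|$, and therefore, using $1-\bbb\geq 0$,
$$(2r_x)^d = (2r_x)^{d\bbb}(2r_x)^{d(1-\bbb)} \leq \frac{\mmm(B(x,r_x))}{t}\,|I|^{d(1-\bbb)}.$$
Writing $\lll(B(x,r_x)) = C_d\, 2^{-d}(2r_x)^d$, this translates into
$$\lll(B(x,r_x)) \leq \frac{C_d}{2^d}\cdot\frac{\mmm(B(x,r_x))\,|I|^{d(1-\bbb)}}{t}.$$

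Next, the family $\{B(x,r_x)\}_{x\in \mcst}$ covers $\mcst$ by balls centered at points of $\mcst$, so Besicovitch's covering theorem yields an integer $Q'(d)$ and subfamilies $\mathcal{F}_1,\dots,\mathcal{F}_{Q'(d)}$ of pairwise disjoint balls among the $B(x,r_x)$ such that $\mcst \subset \bigcup_{i=1}^{Q'(d)}\bigcup_{B'\in\mathcal{F}_i} B'$. Combining the previous estimate with the disjointness of the balls within each $\mathcal{F}_i$ and the inclusion $B'\subset I$, we get
\begin{align*}
\lll(\mcst) &\leq \sum_{i=1}^{Q'(d)} \sum_{B'\in\mathcal{F}_i} \lll(B') \leq \frac{C_d}{2^d}\,\frac{|I|^{d(1-\bbb)}}{t}\sum_{i=1}^{Q'(d)}\sum_{B'\in\mathcal{F}_i}\mmm(B')\\
&\leq \frac{C_d}{2^d}\,\frac{|I|^{d(1-\bbb)}}{t}\sum_{i=1}^{Q'(d)}\mmm(I) \;=\; \frac{C_d\,Q'(d)}{2^d}\cdot\frac{\mmm(I)\,|I|^{d(1-\bbb)}}{t},
\end{align*}
which is the desired bound with $Q(d) := C_d\,Q'(d)/2^d$.

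There is no genuine obstacle here: the argument is essentially the one already appearing in the proof of Lemma \ref{lem0}. The only thing that needs a moment's attention is to notice that the assumption $B(x,r)\subset I$ in the definition of $\mcsib$ is exactly what is needed to obtain $(2r)^{d(1-\bbb)}\leq |I|^{d(1-\bbb)}$; without this constraint the balance between $(2r)^{d\bbb}$ and the Lebesgue measure $(2r)^d$ could not be closed, and the factor $|I|^{d(1-\bbb)}$ in the statement would not appear.
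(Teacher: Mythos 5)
Your proof is correct and follows essentially the same route as the paper's: fix a ball $B(x,r_x)\subset I$ witnessing $\mcsib\,\mmm(x)>t$, use $2r_x\leq |I|$ together with $1-\beta\geq 0$ to introduce the factor $|I|^{d(1-\beta)}$, and then apply Besicovitch's covering theorem to sum the measures of a disjoint subfamily. The only cosmetic difference is that you invoke the ``$Q'(d)$ disjoint subfamilies'' form of Besicovitch while the paper phrases it via bounded overlap and then extracts a single disjoint subfamily; both are standard and equivalent.
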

 
\begin{proof}
For every $x\in \mcst$, there is $r_x$ such that $\mu(B(x,r_x))\geq t (2r)^{d\beta}$. Hence the family of balls $\{B(x,r_x)\}_{x\in \mcst}$ forms a covering of $\mcst$ by balls centered at points of $\mcst$. Recall that for a ball $B\subset \R^d$ of radius $r$, $\lambda(B)=C_d r^d = C_d2^{-d} |B|^d$. Using  Besicovitch's covering theorem, there exists a constant $Q'(d)>0$ (depending only on the dimension) such that one can extract from this (possibly uncountable) family   a countable system $B(x_{i},r_{i})\sse I$, $x_{i}\in\mcst$
such that:
\begin{itemize}
\item 
the union of these balls covers  $\mcst$,
\item
no point $x\in\R^d$ is covered by more than $Q'(d)$ balls of the form $B(x_{i},r_{i})$,
\item  for every $i$, we have  
\begin{eqnarray}
\nonumber
\mmm(B(x_{i},r_{i})) & > & t(2r_{i})^{ d\bbb}  = t \cdot 2^{d\beta} \cdot  (C_d)^{-\beta}  \cdot \lambda(B(x_i,r_i))^\beta\\
\nonumber& =  & t \cdot 2^{d\beta} \cdot  (C_d)^{- \beta}  \cdot \lambda(B(x_i,r_i))  \cdot \lll(B(x_{i},r_{i}))^{\bbb-1}\\
\nonumber& \geq&
t\cdot    2^{d\beta} \cdot  (C_d)^{- \beta}  \cdot \lambda(B(x_i,r_i))  \cdot  \lambda(I)^{\bbb-1}\\
\label{*T1*2}
& \geq&
t\cdot    2^{d } \cdot  (C_d)^{- 1}  \cdot \lambda(B(x_i,r_i))  \cdot |I|^{d(\bbb-1)}.
\end{eqnarray} 
\end{itemize}
Since no point is covered by more than $Q'(d)$ balls $B(x_{i},r_{i})$, one can select an index set
$\cai$ such that the balls $B(x_{i},r_{i})$, $i\in \cai$ are disjoint and the Lebesgue measure of $\cup_{i\in\cai}B(x_{i},r_{i})$
 is greater than $1/Q'(d)$ times that of  $\mcst$. Then summing \eqref{*T1*2} for $i\in\cai$, one obtains
$$\frac{1}{Q'(d)}\lll(\mcst)\leq 
\sum_{i\in \cai}\lll(B(x_{i},r_{i}))\leq 2^{-d}  C_d
|I|^{d(1-\bbb)}\frac{1}{t}\sum_{i\in\cai}\mmm(B(x_{i},r_{i}))\leq
\frac{2^{-d}  C_d\mmm(I)|I|^{1-\bbb}}{t}.$$
Hence the result with $Q(d)=Q'(d)^{-1}2^{-d}C_d$.
\end{proof}

\subsection{The minimum property of the spectrum for non atomic HM measures.}

\begin{lemma}\label{*closedspectr}
Suppose that $\mmm$ is a non-atomic measure supported on
$[0,1]$ and $0\leq \aaa <1$. 
Assume that:
\begin{itemize}
\item for every $\eee>0$,  $E_\mu^{\leq} (\aaa+\eee )$
is dense in $[0,1]$,
\item
 $h_{\mmm}(x)\geq \aaa$ for all $x\in[0,1]$.
\end{itemize}
Then $E_\mu(\alpha)   $ is dense in $[0,1]$. 
\end{lemma}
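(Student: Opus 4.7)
The plan is to show that $E_\mu(\alpha)$ meets every non-empty open subinterval $I\subset[0,1]$. Since $h_\mu\geq \alpha$ everywhere by hypothesis, it is enough to exhibit a point $x\in I$ with $h_\mu(x)\leq \alpha$, i.e.\ a sequence of radii $\rho_n\to 0^+$ with $\mu(B(x,\rho_n))\geq \rho_n^{\alpha+o(1)}$. I would build such a point by a Cantor-style nested construction that concentrates the nested intervals near centers at which the liminf is almost attained.

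Fix $\varepsilon_n=1/n$ and inductively produce open intervals $J_n$, centers $x_n$, and radii $r_n$ as follows. Start with an open $J_0$ with $\overline{J_0}\subset I$. Given $J_{n-1}$, the density hypothesis applied to $E_\mu^{\leq}(\alpha+\varepsilon_n/2)$ yields some $x_n\in J_{n-1}$ with $h_\mu(x_n)\leq\alpha+\varepsilon_n/2$; the definition of the liminf then provides arbitrarily small $r>0$ for which $\mu(B(x_n,r))\geq r^{\alpha+\varepsilon_n}$, so I can pick $r_n<1/n$ with $\overline{B(x_n,r_n)}\subset J_{n-1}$ and $\mu(B(x_n,r_n))\geq r_n^{\alpha+\varepsilon_n}$. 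Finally set $J_n:=(x_n-r_n/3,\,x_n+r_n/3)$, so that $\overline{J_n}\subset B(x_n,r_n)\subset J_{n-1}$ and $|J_n|\to 0$.

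By the Cantor intersection theorem, $\bigcap_n\overline{J_n}$ consists of a single point $x\in J_0\subset I$, and the nesting forces $x\in B(x_n,r_n)$ for every $n$. Consequently $B(x,2r_n)\supset B(x_n,r_n)$, which gives $\mu(B(x,2r_n))\geq r_n^{\alpha+\varepsilon_n}$. Writing $\rho_n=2r_n$ and dividing by $\log\rho_n<0$ (which reverses the inequality),
\begin{equation*}
\frac{\log\mu(B(x,\rho_n))}{\log\rho_n}\;\leq\;(\alpha+\varepsilon_n)\Bigl(1-\frac{\log 2}{\log\rho_n}\Bigr)\;\longrightarrow\;\alpha \qquad (n\to\infty),
\end{equation*}
so $h_\mu(x)=\liminf_{r\to 0^+}\log\mu(B(x,r))/\log r\leq \alpha$. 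Combined with the standing hypothesis $h_\mu(x)\geq\alpha$, this forces $h_\mu(x)=\alpha$, placing $x$ in $E_\mu(\alpha)\cap I$ as required.

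The one delicate point I foresee is the sign bookkeeping: because $\log r_n$ and $\log\rho_n$ are negative for small radii, the pointwise lower bound $\mu(B(x_n,r_n))\geq r_n^{\alpha+\varepsilon_n}$ must be turned carefully into an \emph{upper} bound on $\log\mu/\log\rho$, and one must verify that $\log r_n/\log\rho_n\to 1$ so that the perturbations $\varepsilon_n\to 0$ genuinely produce $\alpha$ (and not, say, $2\alpha$) in the limit. Apart from this routine check, the whole argument is a nested-interval extraction based entirely on the two density/lower-bound hypotheses; non-atomicity of $\mu$ is implicitly ensured since $h_\mu(x_n)<1$ forces $\mu(B(x_n,r))>0$ for all small $r$.
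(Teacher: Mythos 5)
Your proof is correct and follows the same nested-interval strategy as the paper's own argument. The only meaningful difference is in how the nesting step is handled: the paper first picks $x_n$ with $\mu(B(x_n,r_n)) > r_n^{\alpha+1/(n+1)}$ and then uses the non-atomicity of $\mu$ (i.e.\ the continuity of $x\mapsto\mu(B(x,r_n))$) to choose a non-degenerate closed interval $I_n\ni x_n$ on which the same lower bound holds \emph{uniformly}, so that the limit point inherits the estimate at radius exactly $r_n$. You instead set $J_n = B(x_n,r_n/3)$ and pay a factor of two in the radius through the inclusion $B(x,2r_n)\supset B(x_n,r_n)$; as you verified, this factor washes out in the logarithmic limit. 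Your variant is a touch more economical in that it never invokes non-atomicity (or any continuity of $\mu(B(\cdot,r))$), only triangle inequality and elementary estimates. One small correction: the closing remark that non-atomicity is ``implicitly ensured'' is off the mark — non-atomicity is a hypothesis of the lemma, not a consequence of the other two assumptions, and $h_\mu(x_n)<1$ forcing $\mu(B(x_n,r))>0$ has nothing to do with the absence of atoms. Since your argument never actually uses non-atomicity, this aside is harmless, but it should be deleted rather than presented as a justification.
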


Observe  that Lemma \ref{*closedspectr} implies that for a non-atomic HM measure, 
if $\aaa_{0}=\inf \{ h_{\mmm}(x):x\in [0,1] \}<1$, then necessarily $d_{\mmm}(\aaa_{0})\geq 0.$

\begin{proof}
Suppose $0\leq a <b\leq 1$.
Choose $x_{1}\in (a,b)$ and $r_{1}\in (0,1)$  such that $\mmm(B(x_{1},r_{1}))>r_{1}^{\aaa+\frac{1}{2}}$.
Since $\mmm$ is non-atomic one can choose a small non-degenerate closed
interval $I_{1}\sse (a,b)$  such that $x_{1}\in I_{1}$ and
$$\text{for any }x\in I_{1}\text{ we have  }\mmm(B(x,r_{1}))>r_{1}^{\aaa+\frac{1}{2}}.$$
Suppose that we have defined the non-degenerate nested intervals
$I_{1}\supset I_{2}\supset ... \supset I_{n}$ and $r_{1}>r_{2}>...>r_{n}>0$
 such that 
 \begin{equation}\label{*intest}
 \text{for any }x\in I_{n}\text{ we have  }\mmm(B(x,r_{n}))>r_{n}^{\aaa+\frac{1}{n+1}}\text{ and }r_{n}<\frac{1}{n}.
 \end{equation}
By our assumption we can choose $x_{n+1}\in \intt(I_{n})$
and $r_{n+1}\in (0,\frac{1}{n+1})$  such that $$\mmm(B(x_{n+1},r_{n+1}))>r_{n+1}^{\aaa+\frac{1}{n+2}}.$$
One can also choose a non-degenerate closed interval $I_{n+1}\sse I_{n}$
 such that 
 $$\text{for any }x\in I_{n+1}\text{ we have  }\mmm(B(x,r_{n+1}))>r_{n+1}^{\aaa+\frac{1}{n+2}}\text{ and }r_{n+1}<\frac{1}{n+1}.$$
 By induction we can define the infinite nested sequence of intervals 
 $I_{n}$  such that \eqref{*intest}
holds for all $n$. Then any $x_{\aaa}\in \cap_{n=1}^{\oo}I_{n}\sse (a,b)$
satisfies $h_{\mmm}(x_{\aaa})=\aaa.$
 \end{proof}

\subsection{Continuity of the support of the multifractal spectrum, $  \mbox{Support\,($d_\mu$)}$}

Theorem \ref{thdarboux}  is a direct consequence of the next result.

\begin{theorem}\label{thdarbouxb}
Let $\mu$ be a non-atomic  Borel probability measure  supported in the interval $\zu$. Assume that there exists $0\leq \alpha<1$ such that for every $\eee>0$,  $\{ x: h_{\mmm}(x) \leq \aaa+\eee \}$ is dense in $[0,1]$ and $h_{\mmm}(x)\geq \aaa$ for all $x\in [0,1]$. Then for every $\beta \in [\alpha,1]$, $d_\mu(\beta)\geq 0$ and $d_{\mmm}(\bbb)=-\oo$ for $\bbb<\aaa$.
\end{theorem}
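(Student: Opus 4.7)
The statement $d_\mu(\beta) = -\infty$ for $\beta < \alpha$ follows immediately from the hypothesis $h_\mu(x) \geq \alpha$ on $[0,1]$, which forces $E_\mu(\beta) = \emptyset$. So fix $\beta \in [\alpha, 1]$; the aim is to produce at least one point $x_\beta \in [0,1]$ with $h_\mu(x_\beta) = \beta$. The case $\beta = \alpha$ is exactly Lemma~\ref{*closedspectr}, so assume henceforth $\alpha < \beta \leq 1$.

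\textbf{Main construction.} The plan is to build $x_\beta$ as a point in a Cantor-like nested intersection, imposing two antagonistic requirements. Construct recursively a non-degenerate closed interval $I_n \subset I_{n-1}$ with $|I_n| \to 0$, a scale $s_n \downarrow 0$, a threshold $\delta_n \downarrow 0$, and a closed \emph{thin set} $A_n$, so that $F_n := I_n \cap A_1 \cap \cdots \cap A_n$ has positive Lebesgue measure and
\begin{align*}
\text{(T1)}\quad & \mu(B(z, s_n)) \geq s_n^{\beta + 1/n} \quad \text{for every } z \in I_n, \\
\text{(T2)}\quad & \mu(B(z, r)) \leq r^{\beta - 1/n} \quad \text{for every } z \in A_n \text{ and every } r \in (0, \delta_n].
\end{align*}
Any $x_\beta \in \bigcap_n F_n$, non-empty as a decreasing intersection of non-empty compact sets, then satisfies $h_\mu(x_\beta) \leq \beta$ by (T1) (taking $r = s_n$ and letting $n \to \infty$) and $h_\mu(x_\beta) \geq \beta$ by (T2) (letting $n \to \infty$), hence $h_\mu(x_\beta) = \beta$. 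The \emph{thin} set $A_n$ is produced by the maximal inequality of Proposition~\ref{*maxin} on an open ball $\tilde{I}_n \supset I_{n-1}$ with exponent $\beta' := \beta - 1/(2n)$: the set $A_n = \{z \in \tilde{I}_n : M^*_{\tilde{I}_n, \beta'}\mu(z) \leq t_n\}$ is closed, since non-atomicity of $\mu$ makes $z \mapsto \mu(B(z,r))$ continuous for each $r$ and hence $M^*_{\tilde{I}_n, \beta'}\mu$ lower semicontinuous. For $t_n$ large, $\lambda(\tilde{I}_n \setminus A_n)$ is as small as we wish, and the pointwise bound $\mu(B(z,r)) \leq t_n (2r)^{\beta'}$ implies (T2) for $r \leq \delta_n$, where $\delta_n$ depends on $t_n$ and $\mathrm{dist}(A_n, \partial \tilde{I}_n)$. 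The \emph{thick} interval $I_n$ comes from the density hypothesis: locate a Lebesgue density point of $F_{n-1} \cap A_n$, find within any of its small neighbourhoods some $y_n$ with $h_\mu(y_n) \leq \alpha + 1/(3n)$, and from this deduce an arbitrarily small scale $s_n < \delta_n$ with $\mu(B(y_n, s_n)) \geq s_n^{\alpha + 1/(3n)} \geq s_n^{\beta + 1/n}$ (using $\alpha \leq \beta$ and $s_n < 1$). Continuity of $z \mapsto \mu(B(z, s_n))$ then extends the lower bound to a small non-degenerate closed interval $I_n$ around $y_n$, giving (T1).

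\textbf{Main obstacle.} The delicate point is keeping $\lambda(F_n) > 0$ throughout the induction, despite $|I_n| \to 0$ and the removal of a new \emph{bad} set $\tilde{I}_n \setminus A_n$ at every step. This is arranged by choosing $t_n$ at step $n$ large enough that $\lambda(\tilde{I}_n \setminus A_n) \leq \lambda(F_{n-1})/2$; then $\lambda(F_{n-1} \cap A_n) \geq \lambda(F_{n-1})/2 > 0$, so $F_{n-1} \cap A_n$ possesses a Lebesgue density point $z_n^\star$, and picking $y_n$ in a sufficiently small neighbourhood of $z_n^\star$ (where both the density hypothesis and the continuity of $\mu$ cooperate), one takes $I_n$ small enough around $y_n$ that $\lambda(I_n \cap F_{n-1} \cap A_n) \geq \lambda(I_n)/2 > 0$, i.e.\ $\lambda(F_n) > 0$. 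This bookkeeping closes the induction and produces the desired $x_\beta$.
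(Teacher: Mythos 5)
Your framework is sound in outline and it differs genuinely from the paper's: you try to maintain a positive--Lebesgue--measure nested set $F_n = I_n \cap A_1 \cap \dots \cap A_n$, whereas the paper tracks only a single nested sequence of balls $\overline{B(x_n,r_n)}$ and applies the maximal inequality afresh, locally, inside each new ball. The (T1)/(T2) dichotomy you set up, the closedness of $A_n$ via lower semicontinuity of the maximal function, the computation $\delta_n \approx t_n^{-2n}$, and the passage from the liminf bounds at a nested limit point to $h_\mu(x_\beta)=\beta$ are all correct. The gap is in the step that closes the induction: the assertion that, after picking a density point $z_n^*$ of $F_{n-1}\cap A_n$ and then a small-exponent point $y_n$ near $z_n^*$, one can take $I_n$ small enough around $y_n$ so that simultaneously (T1) holds on $I_n$ and $\lambda(I_n \cap F_{n-1}\cap A_n)\geq \lambda(I_n)/2$.

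These two requirements pull in opposite directions and you do not show they can be met at the same time. On the one hand (T1) on $I_n$ is obtained from $\mu(B(y_n,s_n))\geq s_n^{\alpha+1/(3n)}$ by continuity, which only yields a neighbourhood of $y_n$ of uncontrolled (possibly extremely small) size, or, if one uses the more robust inclusion argument $B(y_n,s_n)\subset B(z,2s_n)$, an interval of size $\approx s_n$ centred at $y_n$. On the other hand, the density-point property of $z_n^*$ only controls the \emph{average} density of $E:=F_{n-1}\cap A_n$ in $B(z_n^*,\rho)$: it gives $\lambda(E^c\cap B(z_n^*,\rho))\leq 2\eta_0(\rho)\,\rho$ with $\eta_0(\rho)\to 0$, and hence forces $\lambda(E\cap I_n)\geq |I_n|/2$ only for subintervals $I_n\subset B(z_n^*,\rho)$ of length $|I_n|\geq 4\eta_0(\rho)\rho$; it says nothing about the density of $E$ in an arbitrary small subinterval around a point $y_n\neq z_n^*$, and indeed $E$ can have density $0$ in arbitrarily small intervals around points arbitrarily close to $z_n^*$. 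The difficulty is intrinsic: by (T2) every $z\in A_n$ already satisfies $h_\mu(z)\geq\beta-1/n$, while $y_n$ must satisfy $h_\mu(y_n)\leq\alpha+1/(3n)<\beta-1/n$, so $y_n\notin F_{n-1}\cap A_n$ for large $n$. The small-exponent point you use for (T1) thus necessarily lies in the (measure-zero) complement of the set whose positive measure you are trying to preserve, and the scales $s_n$ at which $y_n$ achieves its small exponent are \emph{liminf} scales with no a priori lower bound; they can all fall below $4\eta_0(\rho)\rho$, however $\rho$ is chosen. Nothing in your ordering of quantifiers breaks this deadlock.

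The paper sidesteps the issue precisely by \emph{not} maintaining a positive-measure set across steps. After choosing $\tilde x_n$ with $h_\mu(\tilde x_n)<\beta$, it takes the \emph{critical} radius $\tilde r_n$, the largest $r$ with $\mu(B(\tilde x_n,r))=r^\beta$ and $B(\tilde x_n,r)\subset B(x_n,r_n)$, so that $\mu(B(\tilde x_n,\tilde r_n))$ is known exactly. Feeding this into Proposition~\ref{*maxin} inside $I=B(\tilde x_n,\tilde r_n)$ with $t=50$ gives a bad set of measure at most $Q(1)\,\tilde r_n^\beta\,(2\tilde r_n)^{1-\beta}/50<\tilde r_n/5<\lambda(B(\tilde x_n,\tilde r_n/3))$, independently of where the good scales of $\tilde x_n$ lie; the next point $x_{n+1}$ is then chosen inside $B(\tilde x_n,\tilde r_n/3)$, also in the full-measure set $D_\mu$ of differentiability points of $F_\mu$, which supplies the single-scale upper bound $\mu(B(x_{n+1},r_{n+1}))\leq r_{n+1}^{\beta'}$ with $\beta'>\beta$. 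The lower bound analogous to your (T1) holds at the \emph{single} scale $2\tilde r_n$, which is \emph{larger} than the new ball, so no continuity-based interval is needed. Without importing something like this critical-scale mechanism, your positive-measure induction does not close.
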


\begin{proof}
The function $F_{\mmm}(x)=\mu([0,x])$ is 
continuous, monotone increasing  and hence
$\lll$-almost everywhere differentiable by Lebesgue's theorem.
We denote by $D_{\mmm}$ the set of those points $x\in(0,1)$ where (a finite) $F_{\mmm}'(x)$ exists.  

\medskip

The case $\bbb=\aaa$ is treated in Lemma \ref{*closedspectr}.
The case $\bbb=1$ is slightly different and will be discussed later. 

\medskip

Suppose $\aaa<\bbb<1$.  We are going to build iteratively a nested sequence of intervals converging to one point $x$ such that $h_\mu(x)=\beta$. 

Put $\bbb'=\frac{1+\bbb}{2}$ (so that $ \beta <\beta'<1$). Clearly, for any $x\in D_{\mmm}$  there exists $r_{\bbb,x}$
 such that $\mmm(B(x,r))\leq r^{\bbb'}$ for all $0<r<r_{\bbb,x}<1.$
 
Choose an $x_{0}\in D_{\mmm}$ and suppose that $r_{0} \leq r_{\beta,x_0}$  
satisfies 
\begin{equation}\label{*T3*1}
\mmm(B(x_{0},r_{0}))\leq r_{0}^{\bbb'}  \  \text{ and }   \ r_{0}^{\bbb'-\bbb}<\frac{1}{10}.
\end{equation}

We start the induction. 

Let $n\geq 1$ be fixed.
Assume that one can construct   two sequences of positive real numbers  $\{(x_{0},r_0), (x_1,r_1),...,(x_{n},r_n)\}$  and $\{(\tx_{0},\tr_0), (\tx_1,\tr_1), ...,(\tx_{n-1},\tr_{n-1})\}$ satisfying  the following properties:

\begin{itemize}
\sk
\item [(P0)]
the real numbers $x_0,x_1,...x_n, \tx_0,\tx_1,..., \tx_{n-1}$ belong to the interval $\zu$,

\sk
\item[(P1)] the radii  are decreasing with $ n$, and they satisfy
$$r_{0}>\tr_{0}>r_{1}>\tr_{1}>...>\tr_{n-1}>r_{n}>0,$$

\sk
\item[(P2)] 
 for $i=1,...,n$, one has
\begin{eqnarray}\label{*T3*2}
\overline{B(x_{i},r_{i})}  & \sse  & B(\tx_{i-1},\tr_{i-1}/3) \, \sse  \, B(x_{i-1},r_{i-1}/3), \\
\label{*T3*3}
\mmm(B(x_{i},r_{i})) & \leq  & r_{i}^{\bbb'}  ,\\
\label{*T3*33}
 \mmm(B(\tx_{i-1},\tr_{i-1}))  & =  & \tr_{i-1}^{\bbb},
\end{eqnarray}

\sk

\item[(P3)] 
 if $x\in\overline{B(x_{i},r_{i}/3)}$ for some $i=1,...,n$,  and if  $B(x,r)\not\sse B(x_{i},r_{i})$
but $B(x,r)\sse B(x_{i-1},r_{i-1})$, then
\begin{equation}\label{*S5*2}
\mmm(B(x,r))<300\cdot r^{\bbb}.
\end{equation}
\end{itemize}

\medskip

Observe that (P2)  implies that for any $x\in\overline{B(x_{i},r_{i})}$
we have 
\begin{equation}\label{*S5*1}
\mmm(B(x,2\tr_{i-1}))\geq \mmm(B(\tx_{i-1},\tr_{i-1}))=\tr_{i-1}^{\bbb}.
\end{equation}

\begin{center}
\begin{figure}
  \includegraphics[width=10.0cm,height = 5.5cm]{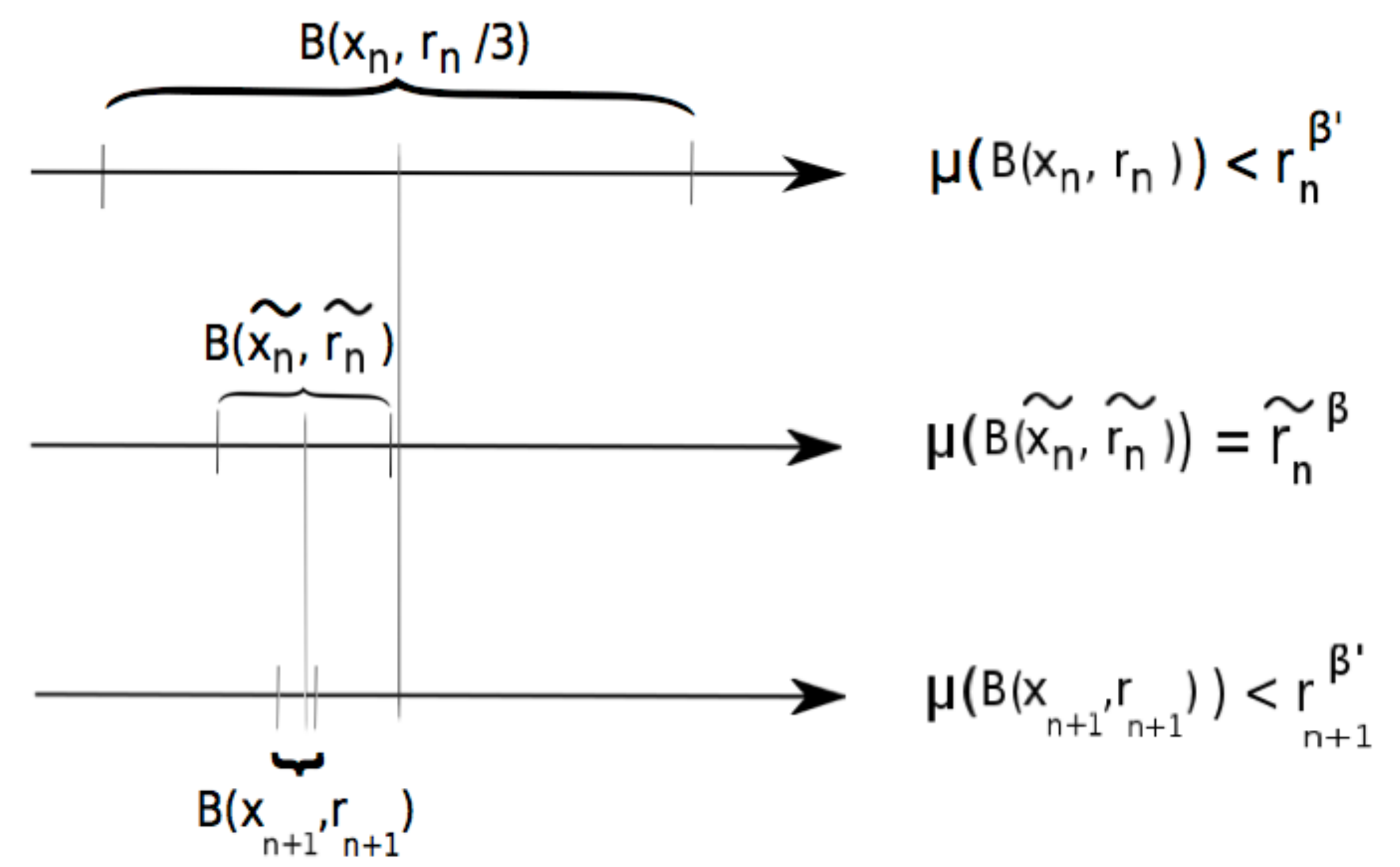}
\caption{Nested sequence of balls in the construction} \label{fig12}
\end{figure}
\end{center}

Let us explain why such a sequence is key to prove Theorem  \ref{thdarbouxb}.

\begin{lemma}
\label{lem0000}
If  there exist four infinite sequences  $(x_{n})$, $(r_{n})$, $(\tx_{n})$ and $(\tr_{n})$
satisfying (P0-3)  for all $n\in\N$, then   Theorem \ref{thdarbouxb} is proved.
\end{lemma}
\begin{proof}
The sequences of radii obviously converge to zero by (P1) and (P2). 

Let $\{x\}=\bigcap_{i=1}^{\oo}\overline{B(x_{i},r_{i})}$.   The "nesting" relation  \eqref{*T3*2} implies that $x$ is always located in the "middle part" of the balls $B(x_i,r_i)$, more precisely in $B(x_i,r_i/3)$ and $B(\tx_i,\tr_i/3)$. 
 
By definition of $h_\mu(x)$, and using \eqref{*S5*1}, we obtain
\begin{equation}
\label{eqqqqq}
h_\mu(x)  \leq \liminf_{i\to+\infty} \frac{\log \mu(B(x,2\tr_i))}  {\log |B(x,2\tr_i)| } \leq \beta.
\end{equation}

Property (P3) allows us to control the behavior of $\mu(B(x,r)) $, for every $r\in (0,r_{1}/3)$. 
Indeed,  fix such a radius $r$, and choose $i\geq 1$ as the unique integer  such that $r_{i}/3< r \leq r_{i-1}/3$ . By construction, one has  $x\in B(x_i,r_i/3)$. Let $R $ be the largest positive real number such that $B(x,R) \subset B(x_i,r_i)$. Obviously, one has  $2r_i/3\leq R \leq r_i \leq r_{i-1}/3$.

\begin{itemize}
\item
if $2r_i/3 < r \leq R$: then using \eqref{*T3*3}
$$\mu(B(x,r)) \leq \mu(B(x_i,r_i)) \leq (r_i)^{\beta'} \leq 3^{\beta'} r^{\beta'}.$$

\item
if $R \leq  r \leq r_{i-1}/3$: then we are in the situation described by (P3), since $B(x,r) \not\subset B(x_i,r_i)$ but from $x\in B(x_{i-1},r_{i-1}/3)$
it follows that  $B(x,r)\subset B(x_{i-1},r_{i-1})$.
 Hence \eqref{*S5*2} holds true.

\smallskip
\end{itemize}
In any case, one sees that $\mu(B(x,r))\leq C r^{\beta}$  for any $r \leq r_1$, for some constant $C$. This implies that $h_\mu(x)\geq \beta$. Combining this with \eqref{eqqqqq}, Lemma \ref{lem0000} is proved.
\end{proof}

\medskip

It remains us to prove that   as soon as we
are given
$(x_{i}$, $r_{i})$ for $i=0,...,n$ and $(\tx_{i}$, $\tr_{i})$ for $i=1,...,n-1$ fulfilling properties (P0-3), we can  construct $(x_{n+1},\ r_{n+1}),\ (\tx_{n},\tr_{n})$ satisfying the same properties.

\medskip

From $r_{n}\leq r_{0}$, \eqref{*T3*1}, and \eqref{*T3*3}, one deduces that
\begin{equation}\label{*T5*1}
\mmm(B(x_{n},r_{n}))\leq r_{n}^{\bbb'} \ \ \text{ and } \ \ r_{n}^{\bbb'-\bbb}<\frac{1}{10}.
\end{equation}
By our assumption  the set $\{ x:h_{\mmm}(x)<\bbb \}$
is dense in $[0,1]$. Choose $\txn\in B(x_{n},r_{n}/6)$  such that 
$h_{\mmm}(\txn)< \bbb$.

\begin{lemma}
There exists a   largest  real number  $\trn$ such that $0<\trn <r_{n}$   and \begin{eqnarray}\label{*S1*2}
\mmm(B(\txn,\trn)) & = &  (\trn)^{\bbb} \\  
\label{*S1*22}
\text{ if  }\trn<r \mbox{ and } 
B(\txn,r) & \sse  & B(x_{n},r_{n}),   \mbox{ then } \ \mmm(B(\txn,r))<r^{\bbb}.
\end{eqnarray}
Moreover,   one  necessarily  has $\trn<r_{n}/10$.
\end{lemma}

\begin{proof}
If $r\geq r_n/10$ and $B(\txn,r)\sse B(x_{n},r_{n})$ then 
from \eqref{*T5*1} 
we deduce  that 
$$\mmm(B(\txn,r))\leq \mmm(B(x_{n},r_{n}))\leq
r_{n}^{\bbb'} \leq \frac 1{10}  r_n ^{\beta} \leq  \frac{10^\beta}{10} r^\beta < r^\beta.$$ 
Hence, if there exists a suitable  $\trn$, then $\trn<r_n/10$.
 
Since $h_\mu(\tilde x_n)<\beta$, there exists $0<r<r_n/20$ such that $\mu(B(\txn,r)) >r^\beta$.
By continuity  of the map $r\mapsto \mu(B(\txn,r))$, we can choose $\trn$ as the largest $r$ such that \eqref{*S1*2} is satisfied. This choice of $\trn$ implies relation \eqref{*S1*22}.
\end{proof}

Observe that, since $\txn\in B(x_{n},r_{n}/6)$ and $\tr_n<r_n/10$, one has
\begin{equation}\label{*S2*1}
B(\txn,\trn)\sse B(x_{n},r_{n}/3).
\end{equation}

\begin{lemma}\label{haromhat}
Let $x\in \overline{B(\txn,\trn/3)}$, $r\geq \trn/3$
and assume that $B(x,r)\sse B(x_{n},r_{n})$. 
Then we have 
\begin{equation}\label{*S3*2}
\mmm(B(x,r))< 5 r^{\bbb}.
\end{equation}
\end{lemma}

\begin{proof}
By construction,  $B(x,r)\sse B(\txn,4r)$.

\sk

$\bullet$ If $B(\txn,4r)\sse B(x_{n},r_{n})$:  using \eqref{*S1*22}, we obtain
\begin{equation*}
\mmm(B(x,r))\leq \mu(B(\txn,4r))\leq (4r)^{\bbb}\leq 4 r^{\bbb}.
\end{equation*} 

\sk

$\bullet$ 
If $B(\txn,4r)\not \sse B(x_{n},r_{n})$: the fact that  $\txn\in B(x_{n},r_{n}/6)$ implies that $4r>5r_{n}/6$, that is, $r>5r_{n}/24$, and
\begin{equation}\label{*S3*1}
\mmm(B(x,r))\leq \mmm(B(x_{n},r_{n}))\leq r_{n}^{\bbb'}\leq
r_{n}^{\bbb}<(24/5)^{\bbb}r^{\bbb}<5r^{\bbb}.
\end{equation}
\end{proof}

 Set $I_{n+1}=B(\txn,\trn)$
and $\hI_{n+1}=B(\txn,\trn/3)$.
 Now, apply Proposition  \ref{*maxin}
with $I=I_{n+1}$ and  $t=50$. 
Since $\mmm(I_{n+1})=\mmm(B(\txn,\trn))=(\trn)^{\bbb}$, 
we have 
$$\lll  \big(  \big \{ x:M^{*}_{I_{n+1},\bbb} \, \mmm(x)>50 \big \}  \big )\leq \frac{Q(1) \cdot ( \trn)^{\bbb}(2\trn)^{1-\bbb}}{50}<\lll(\hI_{n+1}). $$
For the last inequality, we have used that the constant $Q(1)$ in Proposition \ref{*maxin}  is less than 5, \cite{mattila}. Hence, recalling that $D_\mu$ has full Lebesgue measure in  $\hI_{n+1}$,   we can pick 
$x_{n+1}\in \hI_{n+1}\cap D_{\mmm}=B(\txn,\trn/3)\cap D_{\mmm}
$  such that for all $r>0$ if $B(x_{n+1},r)\sse B(\txn,\trn)$
then 
\begin{equation}\label{*S2*2}
\mmm(B(x_{n+1},r))\leq 50(2r)^{\bbb}\leq 100 r^{\bbb}.
\end{equation}

Using that $x_{n+1}\in D_{\mmm}$, one can also   choose $0<r_{n+1}<\tr_{n}/100$
 such that 
 \begin{equation}\label{*S3*3}
 \overline{B(x_{n+1},r_{n+1}) }\sse B(\txn,\trn/3)\sse B(x_{n},r_{n}/3)\
 \text{ and }\
 \mmm(B(x_{n+1},r_{n+1}))\leq r_{n+1}^{\bbb'}.
 \end{equation}

\begin{lemma}
 For all $x\in \overline{B(x_{n+1},r_{n+1}/3)}$,  if
$B(x,r)\not \sse B(x_{n+1},r_{n+1})$ but $B(x,r)\sse B(x_{n},r_{n})$,
then
\begin{equation*}
\mmm(B(x,r))<300r^{\bbb}.
\end{equation*}
\end{lemma}

\begin{proof}
The case $r\geq \trn/3$ is a consequence of Lemma \ref{haromhat}.
 
Let $x$ be as in the statement, and assume that   $r<\trn/3$.

From $B(x,r)\not \sse B(x_{n+1},r_{n+1})$, we deduce that  $2r_{n+1}/3<r.$ 
Hence, $B(x,r)\sse B(x_{n+1},3r)$  and $B(x,r)\sse B(\txn,\trn)$.

\sk

$\bullet$ If $B(x_{n+1},3r)\sse B(\txn,\trn)$ then  by \eqref{*S2*2}, we have
\begin{equation*}
\mmm(B(x,r))\leq \mmm(B(x_{n+1},3r))\leq 100\cdot  3^{\bbb}r^{\bbb}<
300 r^{\bbb}.
\end{equation*}

\sk

$\bullet$ If $B(x_{n+1},3r)\not \sse B(\txn,\trn)$ then one necessarily has  $3r>2\trn/3$.
Since 
$r<\trn /3$ and
$\trn< r_{n}/10$,  we have the inclusions
$B(x,r)\sse B(x,\trn/3)\sse B(x_{n},r_{n})$. Finally,  by
\eqref{*S3*2} used with $r=\trn/3$, we infer
\begin{equation*}
\mmm(B(x,r))\leq \mmm(B(x,\trn/3))\leq 5(\trn/3)^{\bbb}<
5(9r/6)^{\bbb}<25r^{\bbb}.
\end{equation*}
\end{proof}

Summarizing the above, (P0-3) hold when $i=n+1$. Iterating the procedure and applying the three above technical lemmas, we complete our inductive construction.

\medskip

Finally,  we discuss the case  $\bbb=1$,    indicating the minor
adjustments in the proof.

If  there exists $x\in D_{\mmm}$ such that $F_{\mmm}'(x)>0$,
then 
$$h_{\mmm}(x)=\liminf_{r\to 0}\frac{\log\mu(B(x,r))}{\log 2r} =\liminf_{r\to 0}\frac{\log |F_\mu(x+r)-F_\mu(x-r)|}{\log 2r} = 1 ,$$ 
and we are done.
Hence, we suppose that $F_{\mmm}(x)'=0$ for all $x\in D_{\mmm}$.

Choose an $x_{0}\in D_{\mmm}$ and $r_{0}$  such that 
instead of \eqref{*T3*1} we have
\begin{equation*}
\mmm(B(x_{0},r_{0}))\leq r_{0}/20 .
\end{equation*}

Assume that $n\geq 0$ and $(x_{0},r_0)$, ..., $(x_{n},r_n),$ $(\tx_{1},\tr_1),  ...,(\tx_n,\tr_{n})$   are given 
as before and  that \eqref{*T3*2} holds.
Instead of \eqref{*T3*3} and \eqref{*T3*33} we have
\begin{equation*}
\mmm(B(x_{i},r_{i}))\leq r_{i}/20 \text{ and }
\mmm(B(\tx_{i-1},\tr_{i-1}))=\tr_{i-1}\text{ for }i=1,...,n.
\end{equation*}
This implies that  
\eqref{*S5*1}  and \eqref{*S5*2} hold  true 
with $\bbb=1$.  
Further, instead of \eqref{*T5*1} we use
\begin{equation}\label{*Z2*1}
\mmm(B(x_{n},r_{n}))\leq r_{n}/20.
\end{equation}
We select $\txn $ and $\trn$ such that \eqref{*S1*2}
holds with $\bbb=1$.

By \eqref{*Z2*1}, since $B(\txn,r)\sse B(x_{n},r_{n})$, one deduces that 
 $\mmm(B(\txn,r))\leq \mmm(B(x_{n},r_{n}))\leq
r_{n}/20.$ Hence, for $r\geq r_{n}/10$ we have
$\mmm(B(\txn,r))<r$. Therefore,
\eqref{*S2*1} holds in this case as well.

 Finally, Lemma \ref{*maxin} used 
with $t=50$ allows us to  select some real number $x_{n+1}$, and we keep on  arguing as before. One only needs to remove
the inequality containing   $r_{n}^{\bbb'}$ from
\eqref{*S3*1} and
in \eqref{*S3*3} instead of $r_{n+1}^{\bbb'}$
we have to use $r_{n+1}/20$.
We also have to keep in mind that $x_{n+1}\in D_{\mmm}$
and our assumption implies $F'_{\mmm}(x_{n+1})=0.$
\end{proof}

\begin{remark}
It is not difficult to modify the above proof 
so that at each step, in    $\ds B(x_{n},r_{n})$,
two balls $\ds B(x_{n+1}',r_{n+1}')$ and $B(x_{n+1}'',r_{n+1}'')$
are found with the required properties.
Iterating this remark, one concludes that  $E_\mu(\beta)$ is    uncountable.
\end{remark}


\section{A  non-HM monotone function  with an affine   spectrum    }

\label{sec_cons1}

The function ${\bf 1}\!\!  \!{\bf 1}^{*}_{[\aaa_{0},\bbb_{0}]}(h)$
equals $1$ if  $h\in [\aaa_{0},\bbb_{0}]$ and equals $-\oo$ otherwise.
In this section, we  work with monotone functions rather than measures: although the result is the same at the end, functions are more convenient to deal with in our construction.

We construct a monotone function (equivalently a measure) whose spectrum is affine on an interval strictly included in $\zu$ and compatible with the conditions of a spectrum (Proposition \ref{prop2}). In the next sections, we explain how the superposition of   functions built in Theorem \ref{proplinearspectrum}   yield HM and non-HM measures with prescribed spectrum.

\begin{theorem}
\label{proplinearspectrum}
Let $0<\alpha_0\leq \beta_0 < 1$.  Let $0< d<\alpha_0$ and $\eta>0$ 
satisfy 
\begin{equation}\label{*4.1*1}
\text{ $d(1+\eta\beta_0) \leq \beta_0$ and  $d(1+\eta\aaa_0) \leq \aaa_0$}.
\end{equation}

Then there exists a monotone continuous function $Z $  with the following properties:  $Z(x)=0$ when $x\leq 0$,  $Z(x)=1$ when $x\geq 1$, $d_Z(+\infty)=1$ and 
\begin{equation}
\label{spectreZ1}
d_Z(h) = d (1+ \eta h){\bf 1}\!\!  \!{\bf 1}^{*}_{[\alpha_0,\beta_0]}(h) \text{ for }h\in [0,\oo) .
\end{equation}
Moreover, $Z$ can be constructed with the additional properties:
\begin{enumerate}
\item
$\{x: h_Z(x)<+\oo \}=\{x: h_Z(x)<1 \}=\{x: h_Z(x)\leq \bbb_{0} \}$ is located on a Cantor set $\mathcal{C}$, strictly included in $\zu$,
\item
 $\zu\setminus \mathcal{C}$ consists of a countable number of open intervals whose maximal length is less than 1/10,
\item
there exists   $ 0<r_0<1$ such that  for every $x\in \zu$ and $0<r<r_0$,
\begin{equation}
\label{minorZ}
\omega_{B(x,r)} (Z)= |Z(x+r)-Z(x-r)|\leq (2r) ^{\alpha_0 }.
\end{equation} 
\end{enumerate}
\end{theorem}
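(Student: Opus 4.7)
My plan is to realize $Z$ as the cumulative distribution function $F_{\mmm}(x)=\mmm([0,x])$ of a measure $\mmm$ supported on a Moran-type Cantor set $\mathcal{C}\subset (0,1)$ whose multi-scale structure is engineered to produce points of every prescribed local dimension $h\in[\aaa_{0},\bbb_{0}]$. I would discretize the target spectrum by choosing a large integer $N$, setting $h_{j}=\aaa_{0}+(j/N)(\bbb_{0}-\aaa_{0})$ for $j=0,\ldots,N$, and designing $N+1$ ``types'' of building blocks; a type-$j$ sub-interval in the inductive tree is destined to contribute points of local dimension close to $h_{j}$.

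The Cantor set would be $\mathcal{C}=\bigcap_{n\geq 0}\mathcal{G}_{n}$, where $\mathcal{G}_{n}$ is a disjoint union of closed intervals, with $\mathcal{G}_{0}$ chosen so that every interval has length $<1/10$ (giving~(2)). Inside each parent interval $I$ of generation $n$ and length $\ell$, I place for each type $j$ a certain number $c_{n,j}$ of disjoint type-$j$ children of length $\ell^{1+\ttt_{j}}$ (for fixed parameters $\ttt_{j}>0$), separated by uniform gaps. The mass is allocated recursively so that each type-$j$ child receives mass of order $(\ell^{1+\ttt_{j}})^{h_{j}}$, with appropriate normalisation to preserve the parent's total mass. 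With this calibration, the generation-$n$ interval around a point $x$ whose type sequence $(j_{k})$ has asymptotic frequencies $(p_{j})$ satisfies $\mmm(I_{n}(x))\approx |I_{n}(x)|^{\sum p_{j}h_{j}}$. Taking $c_{n,j}$ of order $\ell^{-d(1+\eta h_{j})\ttt_{j}}$ then makes the Hausdorff dimension of the set of points with type frequency concentrated on $j$ equal to $d(1+\eta h_{j})$, by a standard Billingsley/large-deviations argument.

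Once $\mmm$ has been built, I would verify the spectrum in two steps. For the lower bound $d_{Z}(h)\geq d(1+\eta h)$ I single out the subset of $\mathcal{C}$ whose type frequencies realise a prescribed average $h$; a Frostman-type computation against $\mmm$ shows both that the local dimension is exactly $h$ on this subset and that its Hausdorff dimension is $d(1+\eta h)$. For the matching upper bound, the natural cover by generation-$n$ intervals gives an efficient covering of each level set; a key technicality is to lift estimates on Moran intervals to estimates on arbitrary balls $B(x,r)$, using the uniform gap structure so that $B(x,r)$ meets only a bounded number of siblings of comparable generation. The assumptions~\eqref{*4.1*1} guarantee $c_{n,j}\geq 1$ at all generations and are precisely the thresholds ensuring the bound $d_{Z}(h)\leq h$ of Proposition~\ref{prop2} is not violated. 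Points of $\zu\sm\mathcal{C}$ lie in open plateaus of $F_{\mmm}$, whence $h_{Z}(x)=+\oo$ there, settling~(1); endpoints of complementary gaps belong to $\mathcal{C}$ and inherit an exponent in $[\aaa_{0},\bbb_{0}]$ from the Moran structure.

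The main obstacle is the simultaneous two-sided dimension control: the choices of $\ttt_{j}$, $c_{n,j}$ and the mass weights must be balanced so that the Frostman lower bound and the Moran covering upper bound yield the same value $d(1+\eta h)$ for every $h\in[\aaa_{0},\bbb_{0}]$, not just at the grid points $\{h_{j}\}$. This will likely require letting $N\to\oo$ in a controlled fashion (or allowing a continuum of types) together with a density argument showing that convex combinations $\sum p_{j}h_{j}$ of grid values densely and dimensionally fill $[\aaa_{0},\bbb_{0}]$. The H\"older bound~\eqref{minorZ} is a second delicate point: forcing $\mmm(J)\leq |J|^{\aaa_{0}}$ on every construction interval requires a careful choice of the initial generation scale and of the per-child mass, after which $\om_{B(x,r)}(Z)\leq (2r)^{\aaa_{0}}$ follows uniformly for $r<r_{0}$.
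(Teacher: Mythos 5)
Your proposal and the paper's construction both produce the spectrum out of a nested Cantor-set mechanism and both split the verification into a covering upper bound and a Frostman/Moran lower bound, so they share a family resemblance, but the actual engines are quite different. The paper does not construct a single Moran measure with ``types''; it builds $Z$ as an explicit series $Z=\sum_n Z_n$, where each $Z_n$ is continuous, piecewise affine, with base slope $2^{-n}$, and has a steep affine jump on tiny intervals $I_{J_n,k}(\alpha_{n,i})$ of length $2^{-J_n/\alpha_{n,i}}$ for $k$ in carefully chosen sets $\calt_{n,i}$ (arithmetic progressions shifted by distinct primes, so the different exponent types at a fixed generation are automatically disjoint), and only inside intervals that were ``active'' at generation $n-1$. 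The scales $J_n$ are chosen so extravagantly lacunary ($J_n\geq 2^{J_{n-1}/\alpha_0}$) that the oscillation of $Z$ on $B(x,r)$ is dominated by the single term $Z_N$ whose resolution matches $r$. This is the crucial device: it makes the local exponent at a point depend only on the type of the \emph{most recent} active interval containing it, not on any time-average of types. Consequently the Cantor set $\mathcal F(\alpha)$ realising exponent $\alpha$ is obtained by selecting at each generation $n$ one single type $i_n$ with $\alpha_{n,i_n}\nearrow\alpha$, and its Hausdorff dimension is computed directly from the branching numbers $\#\calt_{n,i_n}$; the lacunarity makes the product telescope to $\lim_n\gamma_{n,i_n}=d(1+\eta\alpha)$.

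Your plan replaces this by a genuine multi-type Moran measure in which a point's exponent is the \emph{frequency average} $\sum_j p_j h_j$ of the types visited, and its level-set dimension is to come from a Billingsley/large-deviations argument. You have correctly flagged the main danger (matching the lower and upper bounds for every $h$, not just grid values), but there are two more concrete gaps you have not resolved. First, your mass prescription ``each type-$j$ child of a parent of length $\ell$ receives mass of order $(\ell^{1+\theta_j})^{h_j}$'' cannot be maintained recursively: the parent's own mass already depends on \emph{its} type, so mass conservation $\sum_j c_{n,j}\,(\ell^{1+\theta_j})^{h_j}=\mu(\text{parent})$ over-determines the system; a correct Moran construction would assign transition ratios, at which point the local dimension becomes a ratio of sums over the whole type history, exactly the mixing you were trying to avoid. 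Second, for a frequency-mixing construction the Billingsley entropy formula gives a dimension that is a concave entropy functional of $(p_j)$, not the prescribed affine function $d(1+\eta h)$ of $h=\sum p_j h_j$; getting the affine target would require an additional optimisation and non-trivial tuning of the $\theta_j$ and $c_{n,j}$. The paper's lacunary/sawtooth construction sidesteps both problems at once: there is no recursive mass budget to balance (one simply adds oscillation), and there is no frequency mixing to control (the exponent is determined by the last relevant scale). Your upper-bound covering plan and your reading of where the hypotheses \eqref{*4.1*1} and the H\"older bound \eqref{minorZ} enter are, however, correct and in the same spirit as the paper.
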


\begin{definition}
\label{defFetoile}
We denote by $\caf^{*}$ the class of functions $d_{Z}$ appearing in 
\eqref{spectreZ1} with all possible choices of parameters $(\alpha_0,\beta_0,d,\eta)$ satisfying the assumptions 
of Theorem \ref{proplinearspectrum} (see Figure \ref{fig1}).
 \end{definition}

 It will be useful to keep in mind that after division by
$\aaa_{0}$ the second inequality in \eqref{*4.1*1} is of the form:
$\ds d\Big(\frac {1}{\aaa_{0}}+\hhh \Big)\leq 1$.


The rest of this section is devoted to the proof of Theorem  \ref{proplinearspectrum}.

\begin{center}
\begin{figure}
  \includegraphics[width=7.0cm,height = 5.5cm]{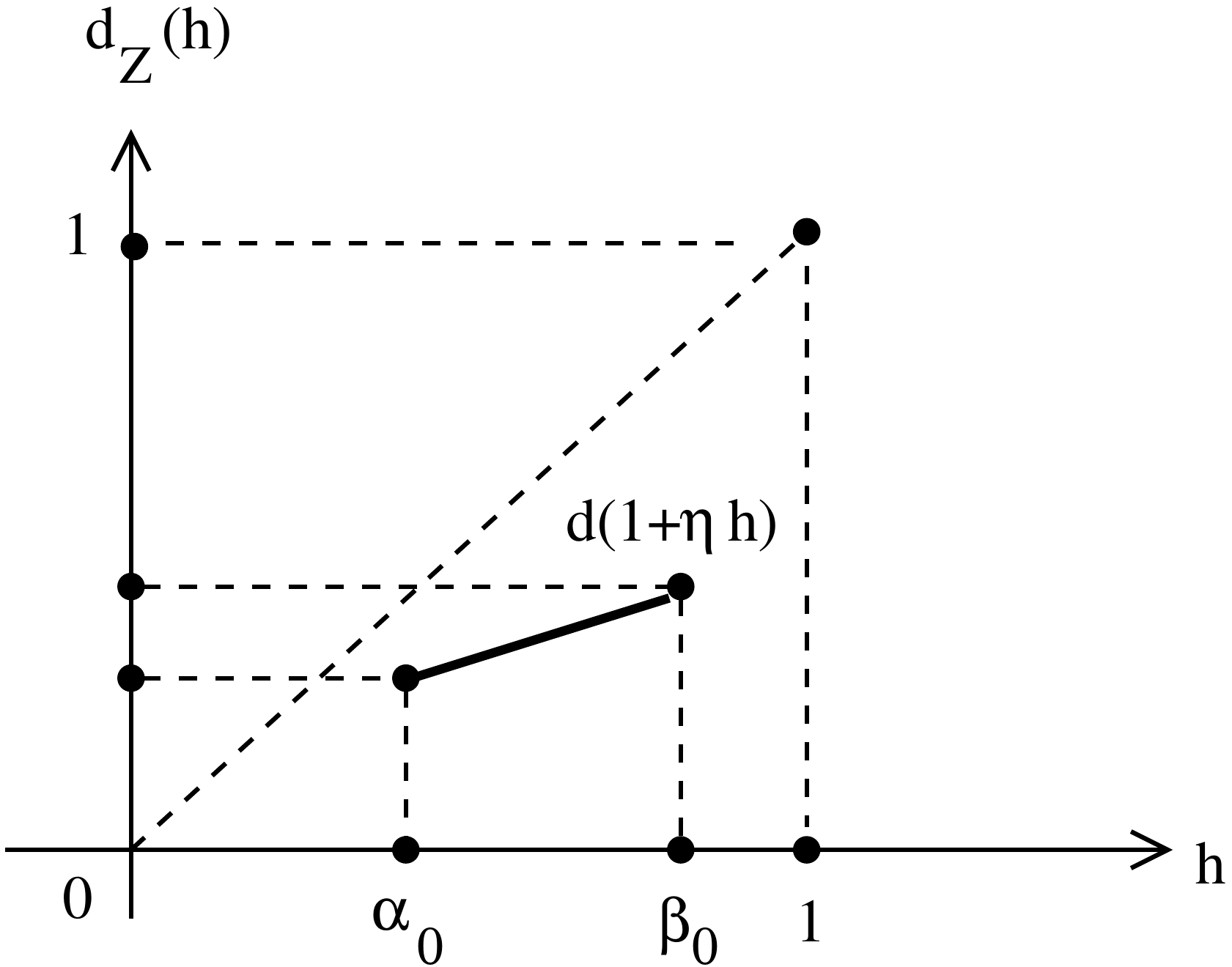}
\caption{Function in the space $\mathcal{F}^*$} \label{fig1}
\end{figure}
\end{center}

{\bf We assume $\aaa_{0}<\bbb_{0}$ and indicate 
(between parenthesis)
during the proof the places
where the case $\aaa_{0}=\bbb_{0}$ requires a slightly different argument.}

\medskip

For every integer $n\geq 1$, let us denote by 
\begin{equation*}
\alpha_{n,0}:= \alpha_0<\alpha_{n,1}< \alpha _{n,2} < \cdots <\alpha_{n,n}<\alpha_{n,n+1}:=\beta_0
\end{equation*}
the unique set of $n+2$ real numbers equally spaced in the interval $[\alpha_{0},\beta_{0}]$. 

\mk

(For the $\aaa_{0}=\bbb_{0}$ case, choose $\aaa_{0}'\in (\aaa_{0},1)$
such that 
\begin{equation}\label{*eqaa}
\frac{(1-\aaa)\aaa'}{(1-\aaa_{0})\aaa_{0}}>{1-\aaa_{0}}\text{ for }\aaa, \aaa'\in [\aaa_{0},\aaa_{0}'].
\end{equation}
For this case we set $\aaa_{n,0}=\aaa_{0}=\bbb_{0}$, $\ds \aaa_{n,1}=\aaa_{0}+\frac{\aaa_{0}'-\aaa_{0}}{n+1},$ 
$\ds \aaa_{n,2}=\aaa_{0}+\frac{2(\aaa_{0}'-\aaa_{0})}{n+1}$
and we do not define $\aaa_{n,i}$, for $i\geq 3.$)
\medskip

For every integer $n\geq 1$, we set
$$\gamma_{n,i}= d(1+\eta\alpha_{n,i})(1-10^{-n}) .$$

 By \eqref{*4.1*1} for any $\aaa\in [\aaa_{0},\bbb_{0}]$, $\ds \frac{d(1+\hhh\aaa)}{\aaa}=d\Big(\frac{1}{\aaa}+\hhh\Big)\leq
 d\Big(\frac{1}{\aaa_{0}}+\hhh\Big)\leq 1$,  hence
 \begin{equation}\label{*42b*}
 \ggg_{n,i}<\aaa_{n,i}.
 \end{equation}
 
\mk

The function $Z$  will be obtained as the sum of an infinite number of functions $Z_n$, $n\geq 1$, whose increments of order $\alpha_{n,i}$, $i\in\{1,2,\cdots, n\}$, have their cardinality controlled. Some notation for dyadic intervals are needed.
\begin{definition} 
Let $(k,j)$ be two positive integers, and $0<\alpha \leq1$ be a real number. We set $I_{j,k} = [k2^{-j},(k+1)2^{-j})$ and
\begin{equation*}
 a_{j,k}(\alpha) = (k+1)2^{-j}  -2^{-j/\alpha} \ \ \mbox{ and } \ \ I_{j,k}(\alpha) = [a_{j,k}(\alpha) ,(k+1)2^{-j}].
 \end{equation*}
Then the length of the interval $I_{j,k}(\alpha) $ is $2^{-j/\alpha}$.
\end{definition}

\subsection{First step}

Let us begin with the function $Z_1$. Let 
\begin{equation}\label{*eoc}
 \eee_{0}=\min\{ \aaa_{0},1-\bbb_{0} \}/2>0. 
\end{equation}

 (When $\aaa_{0}=\bbb_{0}$, we also suppose that
\begin{equation}\label{*eent}
1-\eee_{0}>\aaa_{n,2}.)\hspace{.6cm} 
\end{equation}

Consider $\alpha_{1,1}$ (which belongs to $ (\alpha_{0}, \beta_{0})$), and choose an integer $J_1$ so large that 
\begin{equation}\label{*41a11*}
2^{100}<2^{\left[ J_1 \frac{ \gamma_{1,1} }{\alpha_{1,1}} \right]+1} < 2^{J_1}/10, \  J_{1}\leq 2^{\eee_{0}J_{1}},\  
2^{-J_{1}/\bbb_{0}}<\frac{2^{-J_{1}}}{100},\text{ and }
2^{-1}\cdot 2^{J_{1}\frac{\bbb_{0}-\aaa_{0}}{2}}>1.\end{equation}

 The first inequality  can be satisfied since $\gamma_{1,1}<\alpha_{1,1}$ by \eqref{*42b*}.
 
(When $\aaa_{0}=\bbb_{0}$, we need to argue differently, since  in this case $\ds \aaa_{1,1}=\aaa_{0}+\frac{\aaa_{0}'-\aaa_{0}}{2}$ does not belong to $(\aaa_{0},\bbb_{0})$. In \eqref{*41a11*}
the last inequality should be replaced by $\ds 2^{-1}2^{J_{1}\frac{\aaa_{0}'-\aaa_{0}}{2}}>1.$) 

\mk
 
 We denote by  $\mathcal{T}_1$ the set of integers   
\begin{equation*}
\mathcal{T}_1:= \left\{k\in \{ 1,\cdots, 2^{J_1} -1\}: \ k \mbox{ is  a multiple of $2^{\left [J_1(1- \frac{\gamma_{1,1} }{\alpha_{1,1}} )\right]}$}\right\}.
\end{equation*}
 We also put $\cat_{1,1}=\cat_{1}$.
 Then $$\frac{1}{2} \cdot 2^{J_{1}-\left [J_1(1- \frac{\gamma_{1,1} }{\alpha_{1,1}} )\right]}<\# \cat_{1,1}< 2\cdot 2^{J_{1}-\left [J_1(1- \frac{\gamma_{1,1} }{\alpha_{1,1}} )\right]}.$$
 By \eqref{*41a11*} we also have $$\# \cat_{1,1}=2^{J_{1}\frac{\ggg_{1,1}}{\aaa_{1,1}}(1-\eee_{1,1})}\text{ with }\eee_{1,1}\leq \frac{1}{10}.$$

 \sk
 
 The function $Z_1$ is obtained as follows.
 \begin{itemize}
\item
for every integer $k\in \{ 0,1,\cdots, 2^{J_1} -1\}$ such that $k \not\in \mathcal{T}_1$,  for every $x\in  I_{J_1,k}$,  we set
$$Z_1(x)= x/2  .$$
Hence, $Z_1$ is just   affine on $I_{J_1,k}$, with slope $1/2$.

\sk
\item
for every integer $k\in\mathcal{T}_1$, we set for every $x\in  I_{J_1,k}$, 
$$Z_1(x)=  \begin{cases} 
2^{-1} k 2^{-J_1}  & \mbox{if }  \  x\in   [k2^{-J_1},a_{J_1,k}(\alpha_{1,1}) )
\\
 2^{-1} \left ( (k+1)2^{-J_1}  + 2^{J_1\frac{1-\alpha_{1,1}}{\alpha_{1,1}}}( x- (k+1) 2^{-J_1} ) \right )   & \mbox{if }  \ x\in  I_{J_1,k}(\alpha_{1,1}) .
  \end{cases}  $$
Hence $Z_1$ is first constant on  $[k2^{-J_1},a_{J_1,k}(\alpha_{1,1}) )$, and then affine with a large slope $2^{J_1\frac{1-\alpha_{1,1}}{\alpha_{1,1}}}$ on the interval $ I_{J_1,k}(\alpha_{1,1}) $.

\end{itemize}

\begin{center}
\begin{figure}
  \includegraphics[width=6.5cm,height = 6.0cm]{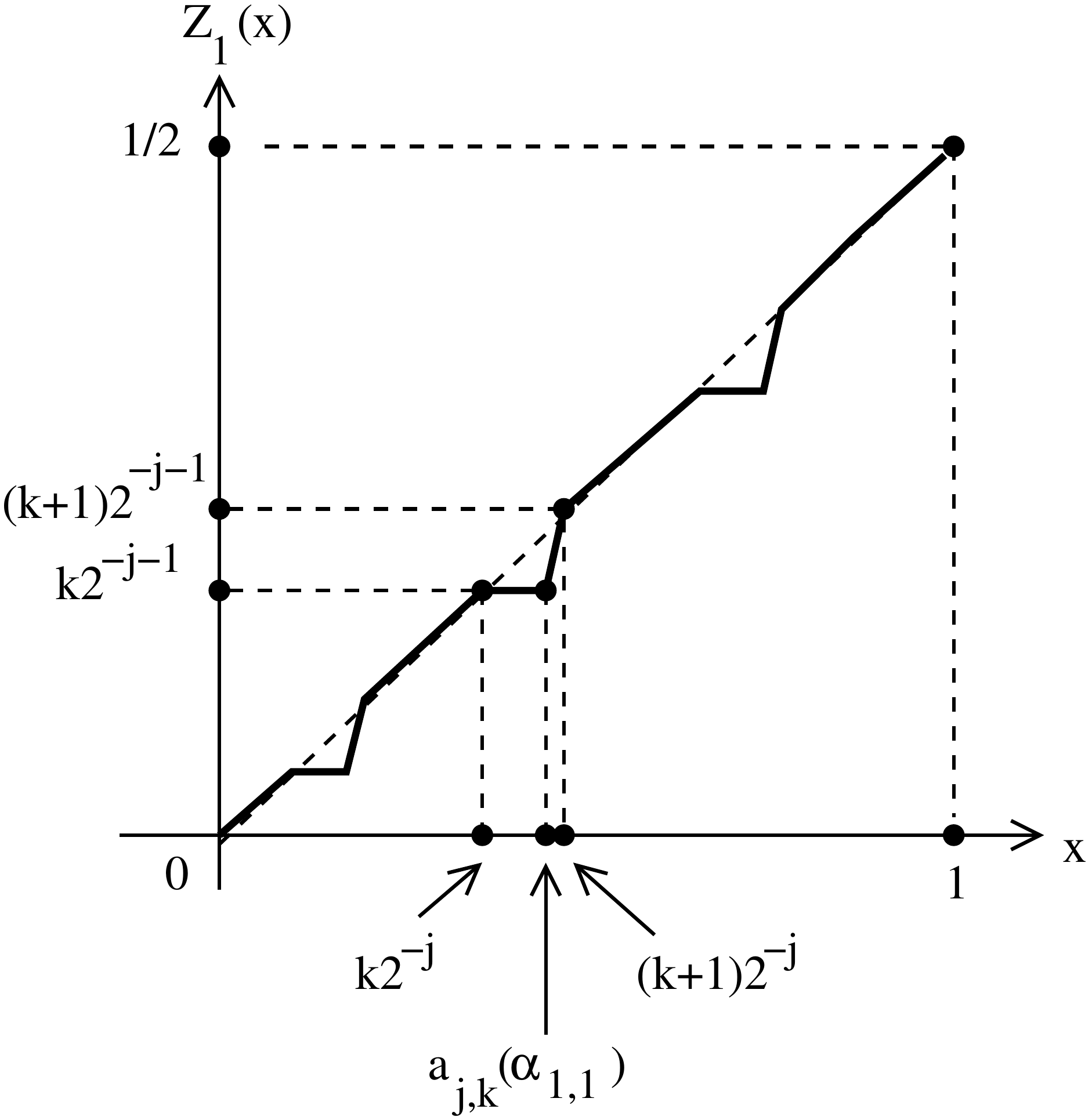}
\caption{Sketch of the graph of $Z_1$} \label{fig2}
\end{figure}
\end{center}

A quick analysis shows that the function $Z_1$ is continuous, piecewise affine, with $Z_1(0)=0$ and $Z_1(1)=1/2$, and that $Z_1(x) \leq x/2$.  Observe that the oscillations of $Z_1$ on the intervals $I_{J_1,k}(\alpha_{1,1})$, $ k \in \mathcal{T}_1$,  satisfy
$$ \om_{I_{J_1,k}(\alpha_{1,1})}(Z_1) = 2^{-1} 2^{-J_1} = 2^{-1}  |I_{J_1,k}(\alpha_{1,1})| ^{\alpha_{1,1}}.$$

We remark that the cardinality of $\mathcal{T}_1$ is $2^{J_1-{\left [J_1(1- \frac{ \gamma_{1,1}}{\alpha_{1,1}} )\right]}} \sim 2^{J_1 \frac { \gamma _{1,1} }{\alpha_{1,1}}} $.

\subsection{Construction of $Z_n$} 

Let $n\geq 2$, and assume that $Z_1$, $Z_2$, $\cdots$, $Z_{n-1}$ are constructed. 
We also suppose that the sets of integers
$\cat_{n-1,i}$ satisfy 
\begin{equation}\label{*482*}
\#\cat_{n-1,i}=2^{J_{n-1}\frac{\ggg_{n-1,i}}{\aaa_{n-1,i}}(1-\eee_{n-1,i})}\ ,  \ \ \text{ where } \ \eee_{n-1,i}\leq 10^{-(n-1)}.
\end{equation}

(When $\aaa_{0}=\bbb_{0}$,  we have $\cat_{n-1,i}$ only for $i=1$.
In this case in the sequel we consider only the index $i=1$, instead of $i=1,...,n.$)

Choose an integer $J_n$ satisfying the 
following conditions:
\begin{equation}\label{*4215*}
\text{
$4n\cdot 10^{n}\cdot 2^{J_{n-1} {10^{2n}}/\aaa_{0}} \leq J_n\cdot \ggg_{n,0}\leq J_{n}$  \ \ and \ \ 
$2n \cdot 2^{J_n(1-10^{-n})} \leq 2^{J_n-J_{n-1}/\alpha_0},$}
\end{equation}
moreover
\begin{equation}\label{*56a*}
4nJ_{n}\leq 2^{\eee_{0}J_{n}}, \  
2^{J_{n}(\frac{1}{\bbb_{0}}-1)}>1 \ \  \text{ and } \ \ 2^{-n}2^{J_{n}\frac{\bbb_{0}-\aaa_{0}}{n+1}}>1.
\end{equation}

(When $\aaa_{0}=\bbb_{0}$, we need to replace the last inequality
by 
\begin{equation*}
 \ds 2^{-n}2^{J_{n}\frac{\aaa_{0}'-\aaa_{0}}{n+1}}>1.)
\end{equation*}

One has
$$[J_{n}     \cdot    \ggg_{n,0}]\leq [J_{n}    \cdot   \ggg_{n,i}]\leq \Big [J_{n}\frac{\ggg_{n,i}}{\aaa_{n,i}}\Big ],$$
and by \eqref{*4.1*1}
$$\frac{\ggg_{n,i}}{\aaa_{n,i}}=d(\frac{1}{\aaa_{n,i}}+\hhh)(1-10^{-n})\leq
d(\frac{1}{\aaa_{0}}+\hhh)(1-10^{-n})\leq (1-10^{-n}).$$
Hence by \eqref{*4215*} for every $i\in\{1,\cdots,n\}$, one obtains
\begin{eqnarray*} 
      \left[J_n \frac{ \gamma_{n,i}} {\alpha_{n,i}}\right] 
\geq [J_{n}\ggg_{n,0}]	  \geq 2^{J_{n-1}/\alpha_{0}}  \  
\mbox{ and } \  \sum_{i=1}^n 2^{\left[J_n \frac{ \gamma_{n,i} }{\alpha_{n,i}}\right]} \leq 2^{J_n-J_{n-1}/\alpha_0} .
\end{eqnarray*}

Simultaneously for all exponents $\alpha_{n,i}$, $i\in\{1,2,...,n\}$, 
our aim is now to spread as uniformly as possible the intervals $I$ on which $Z_n$ has oscillations of order $|I|^{\alpha_{n,i}}$ (as we performed 
during the construction of $Z_1$).    This is achieved as follows.

\mk

Let $p_1=3$, $p_2=5$, ..., $p_n$ be the first $n$ odd  prime numbers.
 For every $i\in \{1,2,...,n\}$, we denote by  $\mathcal{T}_{n,i}$ the set of integers   
\begin{equation}
\label{defini}
\mathcal{T}_{n,i}:=\!  \left\{ \! k\in \{ 1,\cdots, 2^{J_n} \}  \! : 
  \! \begin{cases}
 \  k-p_i \mbox{ is  a multiple of $2^{\left [J_n(1- \frac{  \gamma_{n,i} }{\alpha_{n,i}} )\right]}$, and }\smallskip  \smallskip \\ 
   \mbox{ there exists an integer $0< i' \leq n-1$ such that } \sk\sk\\
 \ k 2^{-J_n}  \mbox{  belongs to } I_{J_{n-1},K} ( \alpha_{n,i'} )  \mbox{ for some $K\in  \mathcal{T}_{n-1,i'} $}
 \end{cases} 
 \!\!\! \!\!  \!\!  \right\} \!.
\end{equation}

(When $\aaa_{0}=\bbb_{0}$, we use only $p_{1}=3$, consider
only $\cat_{n,1}$  and use only $i'=1$
in the definition of $\cat_{n,i}=\cat_{n,1}$.)

To estimate the cardinality of $\mathcal{T}_{n,i}$ we have 
\begin{eqnarray*}  & &
 \frac{1}{2}\sum_{i'=1}^{n-1}  \left(\#\mathcal{T}_{n-1,i'}  \right)   2^{-J_{n-1}/ \alpha_{n,i'}}  \cdot   2^{ J_{n }-{\left [J_n(1- \frac{ \gamma_{n,i} }{\alpha_{{n,i}}} )\right]}}
 \\
&&< \ 
\# \mathcal{T}_{n,i} 
  \ <  \ 2\cdot \sum_{i'=1}^{n-1}  \left(\#\mathcal{T}_{n-1,i'}  \right)   2^{-J_{n-1}/ \alpha_{n,i'}}  \cdot   2^{ J_{n }-{\left [J_n(1- \frac{ \gamma_{n,i} }{\alpha_{{n,i}}} )\right]}} \\
& &
<4 n   2^{ J_{n } \frac{ \gamma_{n,i} }{\alpha_{{n,i}}}},
\end{eqnarray*}
that is,
$$ \#\mathcal{T}_{n,i} \sim \   2\cdot  \sum_{i'=1}^{n-1}   \left(\#\mathcal{T}_{n-1,i'}  \right)  2^{-J_{n-1}/ \alpha_{n,i'}}  \cdot    2^{J_n \frac {\gamma_{n,i}  }{\alpha_{n,i}}}.$$
Hence, using \eqref{*482*} and \eqref{*4215*} we have 
\begin{equation}
\label{majtni}
\# \mathcal{T}_{n,i} =   2^{J_n \frac {\gamma_{n,i}  }{\alpha_{n,i}} (1-\ep_{n,i})} \ \text{ where  \ $\ep_{n,i} \leq 10^{-n}$.}  
\end{equation}
    Finally, we set
 \begin{equation}
 \label{defI}
 \calt_n \ = \ \bigcup_{i=1}^n  \ \calt_{n,i}.
\end{equation}

(When $\aaa_{0}=\bbb_{0}$, we have $\cat_{n}=\cat_{n,1}.$)
\begin{lemma}
If $ 1 \leq i<j \leq n$, then $\mathcal{T}_{n,i} \cap \mathcal{T}_{n,j} =\emptyset$.
Moreover, if $k \in \calt_{n,i}$ and  $k' \in \calt_{n,j}$ such that $k\neq k'$ (one may have $i=j$), then $I_{J_n,k}(\alpha_{n,i})  \bigcap I_{J_n,k'}(\alpha_{n,j})  =\emptyset$.
 \end{lemma}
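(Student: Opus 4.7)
The plan is to prove the two claims separately by elementary arguments: the first is a congruence/number-theoretic observation that exploits the distinct primes $p_i$ built into definition \eqref{defini}, and the second is a length comparison exploiting $\alpha_{n,i}<1$.

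For the first claim, suppose for contradiction that $k\in\mathcal{T}_{n,i}\cap\mathcal{T}_{n,j}$ with $i<j$. Set $M_\ell=\bigl[J_n(1-\gamma_{n,\ell}/\alpha_{n,\ell})\bigr]$; then $k\equiv p_i\pmod{2^{M_i}}$ and $k\equiv p_j\pmod{2^{M_j}}$. The critical observation is that
$\frac{\gamma_{n,\ell}}{\alpha_{n,\ell}}=d\bigl(\frac{1}{\alpha_{n,\ell}}+\eta\bigr)(1-10^{-n})$
is strictly decreasing in $\alpha_{n,\ell}$, so since $\alpha_{n,i}<\alpha_{n,j}$ we get $M_i\leq M_j$. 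Reducing the second congruence modulo $2^{M_i}$ therefore yields $p_i\equiv p_j\pmod{2^{M_i}}$. Now from assumption \eqref{*4.1*1} (linearity in $\alpha$) one has $\gamma_{n,\ell}\leq\alpha_{n,\ell}(1-10^{-n})$, giving $M_i\geq J_n\cdot10^{-n}-1$; combined with the exponentially large lower bound on $J_n$ in \eqref{*4215*}, we have $2^{M_i}\gg p_n$. Since $p_i$ and $p_j$ are distinct primes both smaller than $2^{M_i}$, the congruence forces $p_i=p_j$, contradicting $i\neq j$.

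For the second claim, observe that $I_{J_n,k}(\alpha_{n,i})$ has right endpoint $(k+1)2^{-J_n}$ and length $2^{-J_n/\alpha_{n,i}}$. Since $\alpha_{n,i}\leq\beta_0<1$, one has $2^{-J_n/\alpha_{n,i}}<2^{-J_n}$ strictly, so the left endpoint satisfies $(k+1)2^{-J_n}-2^{-J_n/\alpha_{n,i}}>k2^{-J_n}$. Thus $I_{J_n,k}(\alpha_{n,i})\subset\bigl(k2^{-J_n},(k+1)2^{-J_n}\bigr]$, and the analogous inclusion holds for $I_{J_n,k'}(\alpha_{n,j})$. Distinct values $k\neq k'$ place these half-open dyadic intervals in disjoint cells, so the two subintervals are disjoint.

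I do not anticipate any serious obstacle; the only slightly delicate step is bookkeeping in the first claim to ensure that the modulus $2^{M_i}$ really dominates the primes $p_1,\dots,p_n$, which reduces to chasing the lower bound $J_n\cdot\gamma_{n,0}\geq 4n\cdot 10^n\cdot 2^{J_{n-1}\cdot 10^{2n}/\alpha_0}$ from \eqref{*4215*} together with the universal lower bound $1-\gamma_{n,i}/\alpha_{n,i}\geq 10^{-n}$ coming from \eqref{*4.1*1}. Everything else is straightforward.
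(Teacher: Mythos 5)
Your proof is correct and takes essentially the same approach as the paper: the first claim rests on the congruence $p_i\equiv p_j\pmod{2^{M}}$ being impossible because the modulus dwarfs $p_n\leq 2^{n+1}$ (the paper invokes Bertrand's postulate explicitly for this bound), and the second follows from the strict inclusion $I_{J_n,k}(\alpha_{n,i})\subset(k2^{-J_n},(k+1)2^{-J_n}]$ forced by $\alpha_{n,i}<1$. Your monotonicity observation that $M_i\leq M_j$ when $i<j$ is a slightly cleaner way to order the moduli than the paper's symmetric ``assume $2^{M_i}\geq 2^{M_j}$, the other case is similar,'' but the underlying contradiction is identical.
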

  
 
 \begin{proof}
Suppose that an integer $q$ belongs to $\mathcal{T}_{n,i} \cap \mathcal{T}_{n,j} $. Hence $q$ can be written
 $$ q= p_i+ m_i 2^{\left [J_n(1- \frac{ \gamma_{n,i}  }{\alpha_{n,i}} )\right]} = p_j+ m_j 2^{\left [J_n(1- \frac{ \gamma_{n,j}}{\alpha_{n,j}} )\right]}.$$
 Assume that $2^{\left [J_n(1- \frac{\gamma_{n,i}  }{\alpha_{n,i}} )\right]} \geq 2^{\left [J_n(1- \frac{ \gamma_{n,j}  }{\alpha_{n,j}} )\right]}$, the other case is similar. Then
 \begin{equation}
 \label{eq1}
 0\not=|p_j-p_i| = 2^{\left [J_n(1- \frac{ \gamma_{n,j}  }{\alpha_{n,j}} )\right]}\left|  m_j -  m_i2^{\left [J_n(1- \frac{  \gamma_{n,i} }{\alpha_{n,i}} )\right]- \left [J_n(1- \frac{ \gamma_{n,j}  }{\alpha_{n,j}} )\right]}\right|
> 2^{\left [J_n(1- \frac{ \gamma_{n,j}  }{\alpha_{n,j}} )\right]}. \end{equation}
 
 Obviously, 
 by Bertrand's postulate (proved by Chebyshev) about prime numbers
we have 
 $0<|p_j-p_i| \leq p_n \leq 2^{n+1}$, while $2^{\left [J_n(1- \frac{ \gamma_{n,j}  }{\alpha_{n,j}} )\right]}   >\!\!>2^{n+1}$. Hence,  it is impossible to realize \eqref{eq1}.
 
Finally,  if $k \neq k'$,  since $\aaa_{n,i}$ and $\aaa_{n,j}$ are   smaller
than one,   it is clear from the construction that $I_{J_{n},k}(\aaa_{n,i})\cap I_{J_{n},k'}(\aaa_{n,j})=\ess.$
 \end{proof}

 The non-decreasing mapping $Z_n$ is obtained as follows.
 \begin{itemize}
\item
for every integer $k\in \{ 0,1,\cdots, 2^{J_n} -1\}$ such that $k \not\in \calt_n$,  for every $x\in  I_{J_n,k}$, we set: 
$$Z_n(x)=     2^{-n} x.$$
 
 \item
When the integer $k$ belongs to some $\mathcal{T}_{n,i}$, we set:   for every $x\in  I_{J_n,k}$, 
$$Z_n(x)=  \begin{cases} 
2^{-n} k 2^{-J_n}   & \mbox{if }  \  x\in [k2^{-J_n},a_{J_n,k}(\alpha_{n,i})),\\
2^{-n}  \Big( (k+1)2^{-J_n}   +  2^{J_n\frac{1-\alpha_{n,i}}{\alpha_{n,i}}}( x- (k+1) 2^{-J_n} ) \Big) & \mbox{if }  \ x\in  I_{J_n,k}(\alpha_{n,i}) .
 \end{cases}  $$

\end{itemize}

As above, the function $Z_n$ is continuous, piecewise affine, and it obviously satisfies $Z_n(0)=0$, $Z_n(1)=2^{-n}$ and $Z_n(x)\leq 2^{-n}$.  Moreover,   the oscillations of $Z_n$ on the intervals $I_{J_n,k}(\alpha_{n,i})$, $ k \in \mathcal{T}_{n,i}$,  satisfy
$$ \om_{I_{J_{n},k}(\alpha_{n,i})}(Z_n) = 2^{-n} 2^{-J_n} = 2^{-n}  |I_{J_n,k}(\alpha_{n,i})| ^{\alpha_{n,i}}.$$

In addition, a key remark is that $Z_n$ is not  a linear function with slope $2^{-n}$ only on some intervals $I_{J_n,k}$ that are included in intervals $I_{J_{n-1},k'}$ on which $Z_{n-1}$ has large oscillation. Hence, these nested intervals are the intermediary steps of the construction of a Cantor set.

\subsection{Construction  of $Z$} 

\begin{definition}
We define the mapping $Z:\zu\to \zu$ by the formula
$$\mbox{for every $x\in\zu$, } \ \ Z(x) = \sum_{n=1}^{+\infty} \ Z_n(x).$$
\end{definition}

Immediate properties of $Z$ are gathered in the next Proposition.
\begin{proposition}
The mapping $Z$ is continuous (as uniform limit of continuous functions), strictly increasing, and satisfies $Z(0)=0$ and $Z(1)=1$.
\end{proposition}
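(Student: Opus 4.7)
My plan is to verify the three claims separately, leaning on the properties recorded during the construction of each $Z_n$: continuity, piecewise affineness, non-decrease, and the sup bound $0\le Z_n(x)\le 2^{-n}$ with $Z_n(0)=0$ and $Z_n(1)=2^{-n}$.

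Continuity and the boundary values are immediate by the Weierstrass M-test. Since $\|Z_n\|_\infty\le 2^{-n}$, the series $\sum_n Z_n$ converges uniformly on $[0,1]$ to a continuous limit $Z$. Plugging in the endpoints gives $Z(0)=\sum_n Z_n(0)=0$ and $Z(1)=\sum_n 2^{-n}=1$. Because each $Z_n$ is non-decreasing, $Z$ itself is non-decreasing as a uniform limit of non-decreasing functions.

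The real task is strict monotonicity. Each $Z_n$ has large flat pieces on every interval $I_{J_n,k}$ with $k\in\calt_n$, so a single $Z_n$ is not strictly increasing; the idea is to show that for any pair $x<y$ in $[0,1]$ at least one $Z_n$ is strictly increasing on a subinterval of $[x,y]$. The natural candidates are the ``good'' dyadic intervals $I_{J_n,k}$ with $k\notin\calt_n$, on which $Z_n$ is affine with positive slope $2^{-n}$. The plan is a counting argument showing that $\calt_n$ is sparse enough that any window of $L$ consecutive integers contains a good index. Concretely, each $\calt_{n,i}$ is a translate of a multiple of $2^{[J_n(1-\gamma_{n,i}/\alpha_{n,i})]}$, so it meets a window of length $L$ in at most $1+L\cdot 2^{-[J_n(1-\gamma_{n,i}/\alpha_{n,i})]}$ points. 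Summing over $i=1,\ldots,n$ gives
\begin{equation*}
\#\big(\calt_n\cap\{k_0,\ldots,k_0+L-1\}\big)\le n+L\sum_{i=1}^n 2^{-[J_n(1-\gamma_{n,i}/\alpha_{n,i})]}.
\end{equation*}
Choosing $L=2n+1$ and using the enormous slack built into $J_n$ by \eqref{*4215*} (together with $\gamma_{n,i}/\alpha_{n,i}\le 1-10^{-n}$) the right-hand side is strictly smaller than $L$, so at least one index in the window avoids $\calt_n$.

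Given $x<y$, pick $n$ so large that $(2n+1)2^{-J_n}<(y-x)/2$. Then $[x,y]$ contains a block of $2n+1$ consecutive dyadic intervals $I_{J_n,k}$ of level $J_n$; by the previous counting at least one of them is good, hence on that subinterval $Z_n$ is affine with slope $2^{-n}>0$, and in particular $Z_n(y)-Z_n(x)\ge 2^{-n}\cdot 2^{-J_n}>0$. Combining this with the non-decrease of every other $Z_m$ yields $Z(y)-Z(x)>0$, proving that $Z$ is strictly increasing. The main (mild) obstacle is the sparsity bookkeeping for $\calt_n$, but the factor $(1-10^{-n})$ inside $\gamma_{n,i}$ and the choice of $J_n$ in \eqref{*4215*} leave ample room for the argument.
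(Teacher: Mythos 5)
Your proof is correct, but the strict-monotonicity step takes a detour that the construction renders unnecessary. You treat the ``bad'' intervals $I_{J_n,k}$, $k\in\calt_n$, as obstacles and then count to find a ``good'' window; but in fact $Z_n$ has \emph{exactly} the same total increment over every dyadic interval of level $J_n$, namely
\begin{equation*}
Z_n\big((k+1)2^{-J_n}\big)-Z_n\big(k2^{-J_n}\big)=2^{-n}2^{-J_n}\qquad\text{for every }k\in\{0,\dots,2^{J_n}-1\},
\end{equation*}
whether or not $k\in\calt_n$: on a bad interval $Z_n$ is flat on $[k2^{-J_n},a_{J_n,k}(\alpha_{n,i}))$ and then climbs steeply on $I_{J_n,k}(\alpha_{n,i})$, but it still interpolates the values of $2^{-n}x$ at the dyadic endpoints. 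Consequently, given $x<y$ it suffices to pick $n$ with $2\cdot 2^{-J_n}<y-x$; then $[x,y]$ contains a full interval $I_{J_n,k}$ for some $k$, hence $Z(y)-Z(x)\ge Z_n(y)-Z_n(x)\ge 2^{-n}2^{-J_n}>0$ by monotonicity of the remaining $Z_m$. No sparsity estimate on $\calt_n$ is needed, and no case distinction between $k\in\calt_n$ and $k\notin\calt_n$ arises. Your counting argument (each $\calt_{n,i}$ meets a window of $L$ integers in at most $1+L\cdot 2^{-[J_n(1-\gamma_{n,i}/\alpha_{n,i})]}$ points, and $J_n$ is large enough to make the sum small) does hold up under the bound $J_n\ge 4n\cdot 10^n$ coming from \eqref{*4215*}, so the proof is valid; it is just doing more work than the structure of $Z_n$ demands. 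The paper itself dispatches this proposition with the tail bound $\|\sum_{n>N}Z_n\|_\infty\le 2^{-N+2}$ and the word ``obviously'', so the intended argument is presumably this simpler one.
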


This obviously follows from the construction, and from the fact that 
$$ \mbox{for every $N\geq 1$, } \  \ \|Z-Z_N\|_{\infty} = \Big  \| \sum_{n\geq N+1} Z_n \,\Big\|_{\infty} \leq  2^{-N+2}.$$

 Moreover, let us call $\mathcal{C}$ the (closed) Cantor set
 \begin{equation}
 \label{defcantor}
 \mathcal{C} = \bigcap_{N\geq 1} \ \bigcup_{n\geq N}  \ \bigcup_{i=1}^{n}  \ \ \bigcup_{k\in\!\!\displaystyle{\bigstar_{n}}} \ I_{J_{n},k}(\alpha_{n,i}),
 \end{equation} 
$\bigstar_{n} $ meaning that   the intervals $I_{J_n,k}(\alpha_{n,i})$ considered are only for those $k$ which appear in the construction of $Z_n$. 

\begin{lemma}
\label{lem00}
If $x\notin \mathcal{C}$, then $h_Z(x)=+\infty$.
\end{lemma}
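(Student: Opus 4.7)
The plan is to show that $Z$ is actually locally affine in a neighborhood of any $x\notin\mathcal{C}$, which immediately yields $h_Z(x)=+\infty$. The crux is a \emph{localization claim}: for every $n\geq 2$, every kink of $Z_n$ (a point where its piecewise slope changes) lies in the previous-level Cantor approximation
\[
C_{n-1}:=\bigcup_{i=1}^{n-1}\bigcup_{k\in\calt_{n-1,i}}I_{J_{n-1},k}(\alpha_{n-1,i}).
\]
Between the three building blocks of $Z_n$---the slope-$2^{-n}$ linear piece on $I_{J_n,k}$ for $k\notin\calt_n$, the flat piece on $[k\cdot 2^{-J_n},a_{J_n,k}(\alpha_{n,i}))$ for $k\in\calt_{n,i}$, and the steep piece on $I_{J_n,k}(\alpha_{n,i})$---a change of slope can only occur at a point of the form $k\cdot 2^{-J_n}$ or $a_{J_n,k}(\alpha_{n,i})$. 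Interior kinks $a_{J_n,k}(\alpha_{n,i})$ sit inside $I_{J_n,k}(\alpha_{n,i})\subset C_n\subset C_{n-1}$. A non-trivial boundary kink at $k\cdot 2^{-J_n}$ forces either $k\in\calt_n$---in which case \eqref{defini} places $k\cdot 2^{-J_n}$ inside some $I_{J_{n-1},K}(\alpha_{n-1,i'})\subset C_{n-1}$---or $k-1\in\calt_n$, so $k\cdot 2^{-J_n}$ is the right endpoint of $I_{J_n,k-1}(\alpha_{n,i})$; the super-exponential growth of $J_n$ from \eqref{*4215*} ensures that this whole level-$n$ interval fits into one $I_{J_{n-1},K}(\alpha_{n-1,i'})$, so again the kink is trapped in $C_{n-1}$.

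With the localization claim in hand, fix $x\notin\mathcal{C}$ and pick $n_0$ with $x\notin C_{n_0}$. Because $C_{n_0}$ is a finite union of closed intervals, $\delta:=\dist(x,C_{n_0})>0$, and the nesting $C_m\subset C_{n_0}$ for $m\geq n_0$ gives $(x-\delta,x+\delta)\cap C_m=\emptyset$ for all $m\geq n_0$. Since $Z_1,\ldots,Z_{n_0}$ together have only finitely many kinks, one may further shrink $\delta$ so each of these finitely many functions is affine on $(x-\delta,x+\delta)$ with a well-defined slope $s_n$. For $n>n_0$, by the localization claim every kink of $Z_n$ lies in $C_{n-1}\subset C_{n_0}$, which is disjoint from $(x-\delta,x+\delta)$; hence $Z_n$ is affine on this neighborhood with some slope $s_n\in\{0,2^{-n}\}$. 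The tail $\sum_{n>n_0}|s_n|\leq\sum_{n>n_0}2^{-n}$ converges, so $s:=\sum_{n\geq 1}s_n$ is a finite real number, and summing the exact identities $Z_n(y)-Z_n(x)=s_n(y-x)$ over all $n$ yields
\[
Z(y)-Z(x)=s\,(y-x)\qquad\text{for every }y\in(x-\delta,x+\delta),
\]
so $Z$ is actually affine on this whole neighborhood.

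Taking $P(t):=Z(x)+s\,t$, we get $|Z(y)-P(y-x)|=0\leq C|y-x|^\alpha$ for every $\alpha\geq 0$, whence $Z\in C^\alpha_x$ for every $\alpha$ and $h_Z(x)=+\infty$. The only genuinely non-routine part of the argument is the localization step: it is the place where the specific arithmetic in \eqref{defini} and the super-exponential growth of $J_n$ dictated by \eqref{*4215*} are needed, to rule out any boundary configuration in which a level-$n$ interval would straddle the edge of a level-$(n-1)$ oscillation interval and allow a kink to escape $C_{n-1}$. Once that containment is verified, the remainder of the proof reduces to termwise summation using the monotonicity of the $Z_n$ and absolute convergence of the geometric tail $\sum 2^{-n}$.
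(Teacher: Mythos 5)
Your proof is correct and follows the same underlying strategy as the paper's, namely showing that $Z$ is locally affine off $\mathcal{C}$; the paper's own proof, however, consists of only two sentences and simply asserts the local affineness, whereas you make explicit the localization claim (that the kinks of $Z_n$ lie in $C_{n-1}$, with $C_n\subset C_{n-1}$) that makes the assertion true, correctly identifying that both the arithmetic constraint in \eqref{defini} and the growth condition \eqref{*4215*} are needed to trap the kinks. This is a welcome expansion of the paper's terse argument rather than a different route.
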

\begin{proof}
If $x\notin \mathcal{C}$, then there exists $r>0$ such that the distance between $x$ and $\mathcal{C}$ is larger than $r$. In particular, $x$ does not belong to any interval of the form $I_{J_{n},k}(\alpha_{n,i})$ appearing in the definition of $\mathcal{C}$, for $n$ larger than some integer $N>0$. In other words, $Z$ is affine in a neighborhood of $x$, hence the conclusion.
\end{proof}

Lemma \ref{lem00} and our construction  yield  items (i) and (ii) of Theorem \ref{proplinearspectrum} concerning the size of the complement of the Cantor set.

\subsection{Local regularity properties of $Z$}

To find the  multifractal  spectrum of $Z$, we start by studying its local oscillations.

For every integers $N$, $k$, $i$ and every   $r>0$, let us define the intervals  $I_{J_N,k}(i,r)$  by:
\begin{equation*}
I_{J_N,k}(i,r)  =   I_{J_N,k}(\alpha_{N,i })+ B(0,r).
\end{equation*}

\begin{definition}
For $i_{0}=1,...,N+1$ let us introduce the sets
\begin{eqnarray*}
 \mathcal E_{N,i_0,r} & =& \{ x \in \zu: \om_{B(x,r )}(Z)\geq (2 r)^{ \alpha_{N,i_0} } \},\\
\mathcal E'_{N,i_{0}} & = &   \bigcup _{i:i<i_{0}}  \ \  \bigcup _{k\in\cat_{N,i}} I_{J_N,k}(\aaa_{N,i}), \\
\mathcal  E''_{N,i_{0},r} & = &   \bigcup _{i:i<i_{0}} \ \   \bigcup _{k\in\cat_{N,i}} I_{J_N,k}(i,r) .
 \end{eqnarray*}
\end{definition}

(When $\aaa_{0}=\bbb_{0}$, we consider these sets only for $i_{0}=1,2$,
this restriction applies in Lemma \ref{lemmadecompos}  as well.)

Heuristically, $\mathcal E'_{N,i_{0}} $ contains all the intervals on which $Z_N$ has an exponent less than $\alpha_{N,i_0}$, and $\mathcal E''_{N,i_{0},r}$ is the $r$-neighborhood of these points. Also, $\mathcal E''_{N,1,r}=\ess.$ 

We also remark that $\ds \cac =\bigcap_{N=1}^{\oo}\mathcal E'_{N,N+1}$.
(When $\aaa_{0}=\bbb_{0}$ then $\ds \cac =\bigcap_{N=1}^{\oo}\mathcal E'_{N,2}$.)

The next  Lemma,  very technical,  allows us to "locate" the elements around which $Z$ has an oscillation of a given size.

\begin{lemma}
\label{lemmadecompos}
Let $r\in (0,2^{-J_{5}-1})$,  and let $N\geq 5$ be the unique integer such that 
\begin{equation}
\label{defJN}
2^{-J_{N+1}} \leq  2r <2^{-J_N}.
\end{equation}

Let $i_{0}\in \{ 1,...,N+1 \}$. One has:
\begin{enumerate}
\smallskip
\item
If  \ $2^{-\frac{J_{N}}{\aaa_{0}(1-\bbb_{0})}}<2r<2^{- \frac{J_{N}}{\aaa_{N,i_{0}}}}$, 
then $\mathcal E_{N,i_{0},r} \sse \mathcal E''_{N,i_{0},r}$.
\smallskip
\item
 If \ $2r>2^{- \frac{J_{N}}{\aaa_{N,i_{0}}
}}$,
or  if  \ $2r<2^{-\frac{J_{N}}{\aaa_{0}(1-\bbb_{0})}}$, then $\mathcal E_{N,i_{0},r}  =\ess$.
\smallskip
\item
Moreover,  if $x\in \mathcal E'_{N,i_{0}}$, then there is
$0<r\leq 2^{-J_{N}/\aaa_{N,i_{0}}}$  such that $x\in \mathcal E_{N,i_{0},r} .$
\end{enumerate}
 \end{lemma}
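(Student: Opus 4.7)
\emph{Plan for the proof of Lemma~\ref{lemmadecompos}.}

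The plan is to exploit the sub-additivity $\omega_{B(x,r)}(Z)\leq \sum_{n\geq 1}\omega_{B(x,r)}(Z_n)$ and to estimate each summand according to the comparison between the radius $r$ and the characteristic scales $2^{-J_n/\alpha_{n,i}}$ of the burst intervals built at step $n$. Since $2^{-J_{N+1}}\leq 2r<2^{-J_N}$, the step $n=N$ will dominate, and every other $Z_n$ should contribute only lower-order error.

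First I would collect three uniform pieces:
\textbf{(a) High-frequency tail $n>N$.} Since $Z_n$ agrees with the linear map $x\mapsto 2^{-n}x$ at every dyadic point $k2^{-J_n}$, and $B(x,r)$ contains many such dyadic intervals (because $2^{-J_n}\leq 2^{-J_{N+1}}\leq 2r$), monotonicity gives $\omega_{B(x,r)}(Z_n)\leq 2^{-n}(2r+2\cdot 2^{-J_n})\leq 6r\,2^{-n}$, whence $\sum_{n>N}\omega_{B(x,r)}(Z_n)\leq 12\,r\,2^{-N}$.
\textbf{(b) Low-frequency head $n<N$.} By \eqref{*4215*} the scale $2^{-J_n/\alpha_{n,i'}}$ is vastly larger than $r$, so $B(x,r)$ sits either in an affine piece of $Z_n$ of slope $2^{-n}$ or inside a single burst $I_{J_n,k}(\alpha_{n,i'})$ of slope $2^{-n}2^{J_n(1/\alpha_{n,i'}-1)}\leq 2^{J_n/\alpha_0}$; in either case $\omega_{B(x,r)}(Z_n)\leq 2r\cdot 2^{J_n/\alpha_0}$. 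Summing and using $J_n\cdot\gamma_{n,0}\geq 4n\cdot 10^n\cdot 2^{J_{n-1}\cdot 10^{2n}/\alpha_0}$, the head is bounded by $4r\cdot 2^{J_{N-1}/\alpha_0}$, itself much smaller than $(2r)^{\alpha_{N,i_0}}$ whenever $2r>2^{-J_N/(\alpha_0(1-\beta_0))}$, thanks to the second half of \eqref{*4215*}.
\textbf{(c) Middle term $n=N$.} The burst intervals of $Z_N$ have length $2^{-J_N/\alpha_{N,i}}$ and produce a jump of $2^{-N}2^{-J_N}$. If $B(x,r)$ is disjoint from every burst then $\omega_{B(x,r)}(Z_N)\leq 2^{-N}(2r+2^{-J_N+1})$; if it meets a burst of exponent $\alpha_{N,i}\geq\alpha_{N,i_0}$, its slope $2^{-N}2^{J_N(1/\alpha_{N,i}-1)}$ gives $\omega_{B(x,r)}(Z_N)\leq 2r\cdot 2^{-N}\cdot 2^{J_N(1/\alpha_{N,i}-1)}$, which under $2r<2^{-J_N/\alpha_{N,i_0}}$ is strictly below $(2r)^{\alpha_{N,i_0}}$ precisely when $i\geq i_0$.

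Combining (a)--(c), I would conclude the three items:
\textbf{For (i):} if $x\notin\mathcal{E}''_{N,i_0,r}$ then $B(x,r)$ does not meet any burst $I_{J_N,k}(\alpha_{N,i})$ with $i<i_0$, and the three estimates above add up to strictly less than $(2r)^{\alpha_{N,i_0}}$, so $x\notin\mathcal{E}_{N,i_0,r}$.
\textbf{For (ii), case $2r>2^{-J_N/\alpha_{N,i_0}}$:} here $(2r)^{\alpha_{N,i_0}}>2^{-J_N}$ but the whole $Z_N$ contribution is bounded by $C\cdot 2^{-N}\cdot 2^{-J_N}$ (since $2r<2^{-J_N}$ forces $B(x,r)$ to intersect at most one burst), and head/tail are negligible by (a),(b); so no $x$ can qualify.
\textbf{For (ii), case $2r<2^{-J_N/(\alpha_0(1-\beta_0))}$:} the head estimate in (b) ceases to be comfortable, so I would redo it by observing that at such a tiny scale the even finer inequalities in \eqref{*4215*} force $2r\cdot 2^{J_{N-1}/\alpha_0}\ll (2r)^{\alpha_{N,i_0}}$; and the $n=N$ and $n>N$ terms are also much too small, giving emptiness again.
\textbf{For (iii):} if $x\in I_{J_N,k}(\alpha_{N,i})$ with $i<i_0$, take $r=\tfrac12|I_{J_N,k}(\alpha_{N,i})|=\tfrac12 2^{-J_N/\alpha_{N,i}}\leq 2^{-J_N/\alpha_{N,i_0}}$; then $\omega_{B(x,r)}(Z_N)\geq 2^{-N}\cdot 2^{-J_N}=2^{-N}(2r)^{\alpha_{N,i}}\geq(2r)^{\alpha_{N,i_0}}$ after swallowing the other terms via (a),(b).

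The main obstacle I foresee is the bookkeeping in (ii), second case: the tail and head bounds are both delicate, and one must exploit the full strength of conditions \eqref{*4215*} on how quickly $J_n$ grows with $n$ to ensure that neither the higher- nor the lower-indexed $Z_n$'s can, in concert, create an oscillation of size $(2r)^{\alpha_{N,i_0}}$ at a scale smaller than $2^{-J_N/(\alpha_0(1-\beta_0))}$. The exponent $\alpha_0(1-\beta_0)$ is exactly what makes the $n<N$ head competitive with $(2r)^{\alpha_{N,i_0}}$, so it is essentially forced by the estimate in (b).
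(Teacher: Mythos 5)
Your overall decomposition into the tail $n>N$, the head $n<N$, and the middle $n=N$ is exactly the paper's, and your estimates (a) and (c) match the paper's computations (up to harmless constants). However, you have misidentified the role of the lower threshold $2^{-J_N/(\alpha_0(1-\beta_0))}$, and this leads to a genuine gap in item (ii), second case. The head $n<N$ is \emph{not} what becomes competitive at small $r$: using \eqref{*51bis}, the paper bounds $\sum_{n<N}\omega_{B(x,r)}(Z_n)\leq NJ_N(2r)\leq 2^{\ep_0 J_N}(2r)/4\leq (2r)^{1-\ep_0}/4$, and since $1-\ep_0>\beta_0\geq\alpha_{N,i_0}$ this is $\leq(2r)^{\alpha_{N,i_0}}/4$ for \emph{every} $r\in(0,2^{-J_5-1})$, no threshold needed. (Your own crude bound $4r\cdot 2^{J_{N-1}/\alpha_0}$ also beats $(2r)^{\alpha_{N,i_0}}$ for all $2r<2^{-J_N}$, precisely because \eqref{*4215*} makes $J_N$ grow so fast compared to $J_{N-1}/\alpha_0$.) So the closing sentence of your proposal, attributing the exponent $\alpha_0(1-\beta_0)$ to the head estimate (b), is simply wrong.

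The actual source of the threshold is the $n=N$ term that your part (c) omits: when $B(x,r)$ sits inside a burst $I_{J_N,k}(\alpha_{N,i})$ with $\alpha_{N,i}<\alpha_{N,i_0}$, the local slope is $2^{-N}2^{J_N(1-\alpha_{N,i})/\alpha_{N,i}}$, which is steep, and $\omega_{B(x,r)}(Z_N)\leq 2^{-N}2^{J_N(1-\alpha_{N,i})/\alpha_{N,i}}\cdot 2r$ is \emph{not} automatically below $(2r)^{\alpha_{N,i_0}}$. Rearranging, one needs $2r<2^{-J_N(1-\alpha_{N,i})/((1-\alpha_{N,i_0})\alpha_{N,i})}$, and since $1-\alpha_{N,i}<1$, $1-\alpha_{N,i_0}\geq 1-\beta_0$, $\alpha_{N,i}\geq\alpha_0$, the uniform sufficient condition is exactly $2r<2^{-J_N/((1-\beta_0)\alpha_0)}$; this is the paper's Case 2A. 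Your analysis in (c) only handles $\alpha_{N,i}\geq\alpha_{N,i_0}$ and the disjoint case, so the argument that $\mathcal E_{N,i_0,r}=\emptyset$ below the threshold is missing its key step. Finally, a minor point on (iii): taking $r=\tfrac12\,2^{-J_N/\alpha_{N,i}}$ only guarantees $B(x,r)$ contains half the burst; the paper takes $r=2^{-J_N/\alpha_{N,i}}$ so that $B(x,r)\supset I_{J_N,k}(\alpha_{N,i})$, and then invokes \eqref{*56a*} explicitly (namely $2^{-N}2^{J_N(\beta_0-\alpha_0)/(N+1)}>1$) to convert $2^{-N}(2r)^{\alpha_{N,i}}$ into $\geq r^{\alpha_{N,i_0}}$; you should cite that inequality rather than wave at ``swallowing''.
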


\begin{proof}

 We are going to investigate the possible values of $\om_{B(x,r)}(Z)$, for all possible  values of $x\in\zu$ and $r>0$. 
We also emphasize that 
$0<\ep_{0}$  by \eqref{*eoc} is so small that $0<\alpha_0-\ep_{0} < \beta_0 +\ep_{0} <1$. 
 Recall also that we supposed that $N$ is the only integer satisfying \eqref{defJN}.

 We fix $i_0\in \{1,..., N+1\}$, and we look for the locations of the elements of $\mathcal E_{N,i_{0},r}$.

 Obviously, $\om_{B(x,r)}(Z)  = \sum_{n\geq 1} \om_{B(x,r)}(Z_n)$. 
Let us compare the terms in  this sum  according to  the value of $n$. 

\sk\sk\sk

{\bf (i)  ${\bf  n \geq N+1}$:} by construction, $B(x,r)$ is covered by at most $(2r  /2^{-J_n}) +2 \leq 4r2^{ J_n}$ 
dyadic intervals of generation $J_n$, on which the oscillation of $Z_n$ is exactly $2^{-J_n} 2^{-n}$. 
Hence     $ \om_{B(x,r)}(Z_n) \leq 4r2^{J_n} 2^{-J_n} 2^{-n} \leq 4 \cdot 2^{-n} r$. Since $N> 4$ summing over $n\geq N+1$ yields
\begin{equation}
\label{eq11}
\sum_{n\geq N+1} \om_{B(x,r)}(Z_n) \leq 8\cdot 2^{-N} r\leq   (2r)/4 \leq (2r)^{\alpha_{N,i_0}}/4.
\end{equation}

\sk\sk\sk

{\bf (ii)   ${\bf n\leq N-1}$:} by construction,  $Z_n$ has a 
maximal
slope of $2^{-n}2^{J_n\frac{1-\alpha_{n,j}}{\alpha_{n,j}}}$, for some integer $j\in\{1,\cdots, n\}$, which by $2^{-n}<1$, $1-\aaa_{n,j}<1$, $\aaa_{n,0}<\aaa_{n,j}$ verifies
\begin{equation}\label{*55b*}
 2^{-n}2^{J_n\frac{1-\alpha_{n,j}}{\alpha_{n,j}}} < 2^{J_n\frac{1 }{\alpha_{n,0}}} =2^{J_n/{\alpha_{0}}}   .\end{equation}
By \eqref{*4215*} and \eqref{*56a*}, for every $n<N$ one has
\begin{equation}\label{*51bis} 
\text{
$J_N \geq 2^{J_{n }  /\alpha_0}$ \ \ and  \ \  $    4N J_N \leq 2^{\ep_{0} J_N   }$.}
\end{equation}
  By \eqref{*55b*} we have   $ \om_{B(x,r)}(Z_n) \leq 2^{J_n /\alpha_0}  2r $.
By \eqref{defJN} and \eqref{*51bis} we obtain
 $$
\sum_{n=1}^{N-1} \om_{B(x,r)}(Z_n) \leq  \sum_{n=1}^{N-1}  2^{J_n /\alpha_0} (2r)  \leq   4 N J_N  (2r)/4\leq 2^{\ep_{0} J_N   } 2r /4 \leq   (2 r)^{1- \ep_{0}}/4.
$$
Since $1-\ep_{0} > \beta_0 = \alpha_{N,N+1}\geq \alpha_{N,i_0}$, we deduce that 
\begin{equation}
\label{eq12}
\sum_{n=1}^{N-1} \om_{B(x,r)}(Z_n) \leq (2r)^{\alpha_{N,i_0}}/4.
\end{equation}

(When $\aaa_{0}=\bbb_{0}$, we also use \eqref{*eent}.)
\sk\sk\sk

{\bf (iii)   ${\bf n=N}$:} It remains  to study the oscillation of $Z_N$ on $B(x,r)$.   
 By \eqref{eq11}  and \eqref{eq12}, if $x\in \mathcal E_{N,i_{0},r}$, then one necessarily has 
\begin{equation}
\label{ineg00}
\om_{B(x,r )}(Z_N)\geq (2r)^{  \alpha_{N,i_0} }/2.
\end{equation}
Our goal is now to identify the elements satisfying  \eqref{ineg00}.

 By \eqref{defJN}, $B(x,r)$ contains at most one dyadic number $k2^{-J_N}$.  Recall that  $\calt_N$ defined by \eqref{defI} contains the integers $k$ such that  $Z_N$ possesses large oscillations on  intervals    of the form $I_{J_N,k} (\alpha_{N,i})$. If $B(x,r)$ does not meet any interval of the form $I_{J_N,k}(\alpha_{N,i})$, then the oscillation of $Z_N$ on $B(x,r)$ is less than $2^{-N} 2r  $ (since $2^{-N}$ is the value of the slope of $Z_N$ on such intervals), and \eqref{ineg00} cannot be realized.

 \sk
 
 We thus assume that $B(x,r)$  intersects   an interval of the form $I_{J_N,k}(\alpha_{N,i})$, where 
 $i \in \{0,1,...,N\}$ and  $k\in \calt_{N,i}$.   Observe that $B(x,r)$ can intersect at most two such intervals, and when it does, $2r \sim 2^{-J_N}$ and $\om_{B(x,r)}(Z_N)\leq 2\cdot 2^{-N} \cdot 2^{-J_N}$. 
Thus, in this case, $x\not\in \mathcal E_{N,i_{0},r}$.
 
 \mk
 
 We thus assume that  $B(x,r)$  intersects exactly  one interval of the form $I_{J_N,k}(\alpha_{N,i})$, where 
 $i \in \{0,1,...,N\}$ and  $k\in \calt_{N,i}$.
 
 \sk \sk \sk
 
 {\bf 1.} {\bf  If $\mathbf{2r >   2^{-J_N /   \alpha_{N,i_0}  }}$:}  then  by definition of $Z_N$ 
 the oscillation  on the intersection $I_{J_N,k}(\alpha_{N,i}) \cap B(x,r)$ is  less than the oscillation of $Z_N$ on one interval of size $2^{-J_N}$, i.e. less than 
 $2^{-N}2^{-J_N} \leq  2^{-N} (2r) ^{ \alpha_{N,i_0}  }   \leq  (2r) ^{ \alpha_{N,i_0}  } /4$. 
 Hence,  $x$ cannot belong to $\mathcal  E_{N,i_{0},r}$.

 \mk \sk
 {\bf 2.} {\bf  Next we assume:}
 \begin{equation}
 \label{conditiononr}
 \mathbf {2r \leq   2^{-J_N /  \alpha_{N,i_0}  }.}
 \end{equation}
 
 \sk \sk

\noindent {\bf 2A.   If  ${\bf 2r< 2^{\frac{-J_{N}}{(1-\bbb_{0})\aaa_{0}}}}$:} 
Keeping in mind that $1>1-\aaa_{N,i}$, $1-\bbb_{0}\leq 1-\aaa_{N,i_{0}}$
and $\aaa_{0}\leq \aaa_{N,i}$
this implies that   
\begin{equation}\label{*raa}
2r < 2^{\frac{-J_{N}}{(1-\bbb_{0})\aaa_{0}}}<2^{\frac{-J_{N}(1-\aaa_{N,i})}{(1-\aaa_{N,i_{0}})\aaa_{N,i}}}.
\end{equation}

(At this point, when $\aaa_{0}=\bbb_{0}$,   extra care is needed.
In this case $i_{0}=1$, or $2$. To obtain \eqref{*raa} one can use \eqref{*eqaa}
with $\aaa=\aaa_{N,i_{0}}$ and $\aaa'=\aaa_{N,i}$ to obtain
$$\frac{(1-\aaa_{N,i_{0}})\aaa_{N,i}}{(1-\aaa_{0})\aaa_{0}}>{1-\aaa_{0}}>{1-\aaa_{N,i}}.\hspace{.6cm})$$

 By \eqref{*raa}, one gets
 $$\ooo_{B(x,r)}(Z_{N})< 2^{-N} 
 2^{J_{N}\frac{1-\aaa_{N,i}}{\aaa_{N,i}}}2r< 2^{-N}(2r)^{-1+\aaa_{N,i_{0}}}(2r)\leq (2r)^{\aaa_{N,i_{0}}}/4,$$
 and $x\not\in \mathcal  E_{N,i_{0},r}$. Since this argument applies for any
 $x\in [0,1]$ for ${2r< 2^{\frac{-J_{N}}{(1-\bbb_{0})\aaa_{0}}}}$
 we have
  $\mathcal E_{N,i_{0},r} =\ess$.

\mk\sk

\noindent {\bf 2B. If ${\bf  2^{\frac{-J_{N}}{(1-\bbb_{0})\aaa_{0}}}<2r <2^{-J_N /  \alpha_{N,i_0}  } }$ and   ${\bf \mathbf{\alpha_{N,i} \geq\alpha_{N,i_0} }}$:}   the slope of the function $Z_N$ on the interval $I_{J_N,k}(\alpha_{N,i})$  is exactly $2^{-N}2^{J_N\frac{1- \alpha_{N,i}}{\alpha_{N,i}}}$.
The ball $B(x,r)$ can also intersect $I_{J_{N},k-1}$, or $I_{J_{N},k+1}$
but by \eqref{*56a*} on these intervals the slope of $Z_{N}$ equals
$2^{-N}< 2^{-N}2^{J_N\frac{1- \alpha_{N,i}}{\alpha_{N,i}}}$.
Consequently,  the oscillation of $Z_N$ on $ B(x,r)$  is less than 
 $2^{-N}2^{J_N\frac{1- \alpha_{N,i}}{\alpha_{N,i}}} 2 r $, which by  \eqref{conditiononr} and ${\alpha_{N,i_0}} /{\alpha_{N,i}}\leq 1$  is no more than
$$   2^{-N}  (2r)^{-\alpha_{N,i_0} \frac{1- \alpha_{N,i}}{\alpha_{N,i}} } (2r)  \leq   (2r)^{({ \alpha_{N,i} -1})  \frac{\alpha_{N,i_0}} {\alpha_{N,i}} +1} /4 \leq    (2r) ^{  \alpha_{N,i}}/4  \leq (2r)^{\alpha_{N,i_0} } /4.$$
Once again, $x\notin \mathcal  E_{N,i_{0},r} $.

 \sk \sk \sk

\noindent {\bf 2C.  If ${\bf  2^{\frac{-J_{N}}{(1-\bbb_{0})\aaa_{0}}}<2r <2^{-J_N /  \alpha_{N,i_0}  } }$ and   ${\bf \ \alpha_{N,i} < \alpha_{N,i_0} }$:}    observe   that if $x\not\in \mathcal E''_{N,i_{0},r}$, then $B(x,r)$ does not intersect
 any interval of the form $I_{J_N,k}(\alpha_{N,i })$
 with $\aaa_{N,i}\leq \aaa_{N,i_{0}}$. Hence,    $x\not \in \mathcal E_{N,i_{0},r} $. We deduce that necessarily $\mathcal E_{N,i_{0},r}  \subset \mathcal E''_{N,i_{0},r}$.

 \mk 
 
 This concludes the proof of the  parts (i) and (ii)  of Lemma \ref{lemmadecompos}.
  
 \medskip
   
To obtain part (iii) of the lemma, assume that  $x\in \mathcal E'_{N,i_{0}}$, for some $i_0\in\{1,..., N+1\}$. 

(When $\aaa_{0}=\bbb_{0}$,  this means $i_{0}=2$, since
$\mathcal E'_{N,1}=\ess$.)

Then there exists $i<i_{0}$   
and  $k\in \cat_{N,i}$  such that  $x\in I_{J_N,k}(\alpha_{N,i })$.
Choose $r=2^{-J_{N}/\aaa_{N,i}}< 2^{-J_{N}/\aaa_{N,i_{0}}}$.
Then $B(x,r)\supset I_{J_N,k}(\alpha_{N,i })$ and
$$\ooo_{B(x,r)}(Z_{N})\geq 2^{-N} \cdot 2^{-J_{N}}=
2^{-{N}}(2^{-J_{N}/\aaa_{N,i}})^{\aaa_{N,i_{0}}}(2^{-J_{N}/\aaa_{N,i}})^{\aaa_{N,i}-\aaa_{N,i_{0}}}.$$
Using \eqref{*56a*}, one concludes that
$$\ooo_{B(x,r)}(Z_{N})\geq2^{-N}2^{J_{N}\frac{\aaa_{N,i_{0}}-\aaa_{N,i}}{\aaa_{N,i}}}
\cdot r^{\aaa_{N,i_{0}}}\geq 2^{-N}2^{J_{N}\frac{\bbb_{0}-\aaa_{0}}{N+1}}r^{\aaa_{N,i_{0}}}>r^{\aaa_{N,i_{0}}}.$$
Consequently, $x\in \mathcal  E_{N,i_0,r} $.
  \end{proof}

From these considerations, one deduces easily item (iii) of Theorem \ref{proplinearspectrum}: for every $x\in \zu$, for every $r>0$ small enough,  $Z(x+r)-Z(x-r) \leq  2r^{\alpha_0}$. 

Indeed, consider  $0<r<2^{-J_{5}-1}$ and the associated integer $N> 4$  such that 
\eqref{defJN} holds. By Lemma \ref{lemmadecompos} we have
$\mathcal E_{N,1,r}\sse \mathcal E''_{N,1,r}=\ess$.
Hence, for any $x\in[0,1]$,
\begin{equation}\label{*19201*}
\om_{B(x,r)}(Z)=Z(x+r)-Z(x-r)<(2r)^{\aaa_{N,1}}<(2r)^{\aaa_{N,0}}=
(2r)^{\aaa_{0}}.
\end{equation}
  
 \mk

\subsection{Upper bound for the spectrum of $Z$} We need to consider only points within the Cantor set $\mathcal{C}$.

   \medskip

 Let $\alpha_0\leq \alpha\leq \beta_0$, and assume that a real number $x$ satisfies $ h_Z(x)\leq \alpha$.
(When $\aaa_{0}=\bbb_{0}$,  we need to consider only the case
$\aaa=\aaa_{0}$.)
 By definition, for every $\ep>0$ such that $0 <\alpha_0-\ep<\beta_0+\ep<1$,  one can find a strictly increasing sequence of integers $p$ such that $x$  belongs to an  $\mathcal E_{N_p,i_{N_p },2^{-p}} (Z )$, where $i_{n }$ is the largest  integer satisfying $\alpha_{n,i_{n }} \leq \alpha+\ep$, ($i_n$ depends on $\alpha$ and $\ep$, but we omit the subscripts for clarity). Otherwise we would have for every small $r>0$
 $$\om_{B(x,r)} < (2r)^{\alpha +\frac{\eee}{2}},$$
 which contradicts the fact that $ h_Z(x)\leq \alpha$.

  Hence, we have the inclusion 
\begin{equation}
\label{upperbound}
E_Z^{\leq} (\alpha)= \{x:h_Z(x)\leq \alpha\} \subset \limsup_{p\to +\infty} \mathcal  E_{N_{p},i_{N_p},2^{-p}}   =  \bigcap_{P\geq 1} \ \bigcup_{p\geq P}   \ \mathcal E_{N_p,i_{N_p},2^{-p}}  .
\end{equation}
 Using this, we prove that the Hausdorff dimension of $E_Z^{ \leq}(\alpha) $ is less than $d(1+\eta\alpha)$. 
 Let $s> d(1+\eta\alpha)$. 
 
 Obviously, by \eqref{upperbound}, $E_Z^{\leq} (\alpha)$ is covered for every   $P\geq 1 $ by the union  
 $\bigcup_{p\geq P}   \ \mathcal E_{N_p,i_{N_p},2^{-p}}  .$
 Let us count the number of intervals in  
$\mathcal E_{N_p,i_{N_p },2^{-p}} $. 
 Recalling  \eqref{majtni} we deduce using  the decomposition above that  $\mathcal E_{N_p,i_{N_p},2^{-p}} $   is covered by the union over each $i$ 
 such that $i< i_{N_p}$  of  
  $$2\cdot 2^{J_{N_p}   \frac {\gamma_{N_p,i }}{\alpha_{N_p,i }}(1-\ep_{N_p,i})}   $$
   intervals of the form $ I_{J_{N_p}  ,k}(i,2^{-p})  $  (here the value of $i$  
   depends on the value of $2^{-p}$).
   But as noticed above, in order to have $h_Z(x)\leq \alpha$,  one necessarily has $x\in  I_{J_{N_p}  ,k}(\alpha_{N_p,i})$  (not only $x \in I_{J_{N_p}  ,k}(i,2^{-p})  $). 
   
 Hence, the $s$-Hausdorff pre-measure $\mathcal{H}^s_\delta$ (which is obtained by using covering of size less than $\delta\geq 2^{-p}$) of  $\mathcal E_{N_p,i_{N_p},2^{-p}} $ is bounded from above by
 \begin{eqnarray*}
 \mathcal{H}^s_\delta( \mathcal E_{N_p,i_{N_p},2^{-p}}) & \leq & \sum_{i=1}^{i_{N_p}-1}  |  I_{J_{N_p}  ,k}(\alpha_{N_p,i })|^s  \cdot 2\cdot 2^{J_{N_p}   \frac {\gamma_{N_p,i }}{\alpha_{N_p,i  }}(1-\ep_{N_p,i})} \\
  & \leq & C    \sum_{i=1}^{i_{N_p}-1} 
  2^{J_{N_p}  \left(   \frac {\gamma_{N_p,i }}{\alpha_{N_p,i  }}(1-\ep_{N_p,i})- \frac{s}{\alpha_{N_p,i}} \right)} .
  \end{eqnarray*}
Then, 
 \begin{eqnarray*}
 \mathcal{H}^s_\delta(  E^{\leq}_Z(\alpha)) & \leq & \sum_{p\geq P }  \mathcal{H}^s_\delta( \mathcal E_{N_p,i_{N_p},2^{-p}}) \\
 & \leq & C  \sum_{p\geq P } \sum_{i=1}^{i_{N_p}-1}   2^{J_{N_p}  \left(   \frac {\gamma_{N_p,i }}{\alpha_{N_p,i  }}(1-\ep_{N_p,i})- \frac{s}{\alpha_{N_p,i}} \right)} .
  \end{eqnarray*}
This series converges since $s >d(1+\eta \alpha) >\gamma_{N_p,i_{N_p}} >\ggg_{N_{p},i}$ and $\ep_{N_p,i} \leq 10^{-N_p}$ (for every indices $p$ and $i$).
We conclude that  $ \mathcal{H}^s_\delta( E^{\leq}_Z(\alpha) ) =0$,  hence $\dim_{\mathcal{H}} E^{\leq}_Z(\alpha) \leq s$. Since this holds for every $s>d(1+\eta \alpha)$ we obtained our upper bound for the spectrum of $Z$  
when $\aaa\in [\aaa_{0},\bbb_{0}]$.

By \eqref{minorZ}, $h_{Z}(x)\geq \aaa_{0}$ for any $x\in [0,1]$
and hence $\{ x:h_{Z}(x)<\aaa_{0} \}=\ess.$

On the other hand, in order to have $h_{Z}(x)<+\oo$
one needs $x\in \cap_{N=1}^{\oo}  \mathcal E'_{N,N+1}$.
But then by Lemma \ref{lemmadecompos} there exists $r_{N}\to 0$
 such that $x\in\mathcal  E_{N,N+1,r_{N}} $. Since $\aaa_{N,N+1}= \bbb_{0}$ we obtain that in this case $h_{Z}(x)\leq \bbb_{0}.$
 
 (When $\aaa_{0}=\bbb_{0}$,  we have $x\in \mathcal E_{N,2,r_{N}}$ and
 $\aaa_{N,2}\to\aaa_{0}=\bbb_{0}.$)

\subsection{Lower bound for the spectrum of $Z$}
  Let $\alpha \in [\alpha_0,\beta_0]$, and consider  the sequence of sets
 $\mathcal{F}_{n} (\alpha)= \bigcup_{k\in \mathcal{T}_{n,i _{n}}} I_{J_{n},k} (\alpha_{n,i_{n}})$,
 where $i_{n}\in \{ 1,...,n \}$ is the largest  integer satisfying $\alpha_{n,i_{n}} < \alpha$.
(In case of $\aaa=\aaa_{0}$, such an integer does not exist
and we set $i_{n}=1$ in this case.)

 Consider the Cantor set 
  $$\mathcal{F}(\alpha) = \bigcap_{n\geq 2} \mathcal{F}_{n} (\alpha).$$
Obviously $\mathcal{F}(\alpha) \subset \mathcal{C}=\cap_{N=1}^{\oo} E'_{N,N+1}$, where  $\mathcal{C}$ is the Cantor set  defined by \eqref{defcantor} on which the function $Z$ has exponents between $\alpha_0$ and $\beta_0$.

First remark  that by \eqref{*4215*} the sequence $(J_n)_{n\geq 2}$ is lacunary (for instance, we have $J_n \geq 2^{J_{n-1}}$).
In addition, recall that by \eqref {majtni}, one has
$$\# \mathcal{T}_{n,i_{n}} =   2^{J_{n} \frac {\gamma_{n,i_{n}}  }{\alpha_{n,i_{n}}}  (1-\ep_{n,i_{n}})}  .$$
From the lacunarity of $(J_n)$, one deduces that
$$  \log  (\prod_{m=1}^n \ \# \mathcal{T}_{m,i_{m}}) \sim_{n\to +\infty}  \log   \ \# \mathcal{T}_{n,i_{n}}.$$
Finally, the intervals of $I$, which are all of length  $ 2^{-J_{n} /{\alpha_{n,i_{n}} }}  $  belonging to $\mathcal{F}_{n} (\alpha)$ are embedded in those of $\mathcal{F}_{n-1} (\alpha)$, and (remembering definition \eqref{defini}), these intervals are separated by a distance at least equal to 
$$2^{\left [J_{n}(1- \frac{  \gamma_{{n},i_{n}} }{\alpha_{{n},i_{n}}} )\right]} 2^{-J_{n}}  \sim 2^{ - J_{n}  \frac{  \gamma_{{n},i_{n}} } {\alpha_{{n},i_{n}}} } .$$

By a classical argument (see \cite{F2}, examples 4.6 and 4.7 for instance)  allowing to compute the Hausdorff dimension of this type of Cantor sets,  we have
\begin{eqnarray*}
 \dim_{\mathcal{H}} \, \mathcal{F}(\alpha) &  \geq  &  \liminf_{n\geq +\infty}    \frac {  -  \log \, ( \, \prod_{m=1}^n \ \# \mathcal{T}_{m,i_{m}})} {    \log \,   2^{-J_{n} /{\alpha_{n,i_{n}} }} }
\\
 & =  & \liminf_{n\geq +\infty}    \frac {   -\log  \ \# \mathcal{T}_{n,i_{n}} } {    \log \,   2^{-J_{n} /{\alpha_{n,i_{n}} }} }\\
 & = & \lim_{n\to+\infty}  {\gamma_{n,i_{n}}   } (1-\ep_{n,i_{n}})  = d(1+\eta\alpha).
 \end{eqnarray*}

Suppose $\aaa_{0}<\aaa\leq\bbb_{0}$ and $0<r<2^{-J_{5}-1}$ and choose
$N$ satisfying \eqref{defJN}. Then $x\in \cae_{N}(\aaa)=\bigcup_{k\in \mathcal{T}_{N,i _{N}}} I_{J_{N},k} (\alpha_{N,i_{N}})$
and we have $\cae_{N}(\aaa)\cap E''_{N,i_{N},r}=\ess$.
This implies $x\not\in E_{N,i_{N},r}(Z)$, that is,
\begin{equation*}
\ooo_{B(x,r)}(Z)<(2r)^{\aaa_{N,i_{N}}}.
\end{equation*}
If $\aaa=\aaa_{0}$ by \eqref{*19201*} we have 
$\ooo_{B(x,r)}(Z)<(2r)^{\aaa_{0}}$.

On the other hand,
$\cae_{N}(\aaa)\sse E'_{N,i_{N}+1}$ and there exists
$r'\leq 2^{-J_{N}/\aaa_{N,i_{N}+1}}\leq 2^{-J_{N}}$
 such that 
 \begin{equation*}
 \ooo_{B(x,r')}(Z)\geq (2r')^{\aaa_{N,i_{N}+1}}.
 \end{equation*}
 Since $\aaa_{N,i_{N}+1}-\aaa_{N,i_{N}}\to 0$ as $N\to\oo$
 and $\aaa_{N,i_{N}}\leq \aaa <\aaa_{N,i_{N}+1}$ we have
 $h_{Z}(x)=\aaa.$


\section{Multifractal  spectrum  prescription of a non-HM measure}

\label{sec_cons11}

Let $f\in  \mathcal{F}$. We build a probability measure $\mu$ whose multifractal spectrum $d_\mu(h)$ is exactly $f(h)$ for the exponents $h<1$. To get  part (iii) of Theorem \ref{th111} (the spectrum for $h=1$), it is enough to consider the measure $(\mu + \lll)/2$, where $\lll$ is the Lebesgue measure on $\zu$.

\mk

 We call $(\tilde f_n)$ the sequence of functions associated with $f\in \mathcal{F}$, which are  constant  
 over a closed interval $I_n \subset \zu$ 
 and satisfy $\widetilde f_n(x) \leq f(x)$. We set 
\begin{equation}\label{*aodef}
\alpha_0 = \inf_{n\geq 1} \min I_n>0.
\end{equation}

Recall that $\mathcal{F}^*$, introduced in Definition \ref{defFetoile}, was the set of functions suitable for Theorem \ref{proplinearspectrum}.
For each $\widetilde f_n$, there exists a countable sequence  of affine functions $(f_{n,p})_{p\geq 1}$,
$f_{n,p}\in \caf^{*}$
 such that
$\text{Support}(f_{n,p})=I_{n}$ for all $p$
and 
 $\widetilde f_n = \sup_{p} f_{n,p}$. 

Hence, if we consider the  countable family of  functions $(f_{n,p})_{n\geq 1,\  p\geq 1}$, we  still have
 $f=\sup_{n\geq 1,\ p\geq 1} f_{n,p}.$ 
 By adjusting our notation
 we call this new countable family    $ (   f_p)_{p\geq 1}$, and we have $f = \sup_{p\geq 1}    f_p$.

\begin{remark}
This procedure will be used in the next section also.
\end{remark}

For every integer $p\geq 1$, by Theorem \ref{proplinearspectrum}, one can find a surjective monotone function  $Z_p:\zu\to\zu $ whose multifractal spectrum is exactly $f_p$. Let us call $\mu_p$ the measure defined as the integral of $Z_n$: $\mu_p([0,x])= Z_p(x)$. We obtain the measure $\mu$  as follows: for every $p\geq 1$, the restriction of $\mu$ to the interval $[2^{-p}, 2^{-p+1}]$ coincides with  $2^{-p} (\mu_p\circ \ell_p)$, where $\ell_p$ is the unique affine bijective increasing map from $\zu $ to $[2^{-p}, 2^{-p+1}]$.

It is a trivial matter to see that  
$$d_\mu = \sup_{p\geq 1} d_{\mu_p},$$
 since affine mappings do not modify the multifractal spectrum and since the supports of the measures $\mu_p\circ \ell_p$ are disjoint.  Problems may occur only at the concatenation points between the supports of the measure, i.e. at the rationals $2^{-p}$, $p\geq 1$, and at $0$. In reality, there is no problem at the rationals  $2^{-p}$, because of item (i) of Theorem \ref{proplinearspectrum} (and the fact that 0 and 1 are isolated points  and  do not belong to the support of the measures  $\mu_p$), which ensures that these points satisfy $h_\mu(2^{-p})=+\infty$. Hence, only 0 may be a problem, but it is easy  to modify the measure $\mu$ (for instance by performing a time subordination which is singular at 0) to ensure that $h_\mu(0)=+\infty$. This yields Theorem \ref{th111}.

\section{Multifractal  spectrum  prescription of a  HM measure }
\label{sec_cons3}

Let $f$ be a function belonging to the set of functions $\caf$. We apply the same procedure as in the previous Section \ref{sec_cons11} to get a  countable family of  functions     $ (   f_p)_{p\geq 1}$ all belonging to $\mathcal{F}^*$ and satisfying  $f = \sup_{p\geq 1}    f_p$.

\mk

For each function $   f_p$,  using Theorem \ref{proplinearspectrum}, there exists a function $   Z_p$ whose spectrum 
on $[0,+\oo)$
is exactly $   f_p$. The construction of Theorem \ref{proplinearspectrum} guarantees that the function $   Z_p$ has a particular form: the points $x$ such that $h_{   Z_p}(x) <+\infty$ are located on a Cantor set $C_p$, and the largest interval  in the complementary set of $C_p$ in $\zu$ has length less than 1/10. The functions $Z_n$ are extended as continuous functions $\R\to \R$, by setting $Z_n(x)=0$ if $x\leq 0$ and $Z_n(x)=1$ if $x\geq 1$.
 
 \mk

The idea behind our construction is to "insert" in each open interval complementary to the Cantor set $C_p$ a copy of another function $   Z_{p'}$, so that the new function will have a  multifractal  spectrum equal to the supremum of the spectra of $   f_p$ and $   f_{p'}$, since the two functions have disjoint supports.
 We will repeat this a countable number of times, with a strong redundancy, and the function $Z$ obtained 
 as the uniform limit of  continuous functions $(Y_{n})$ will have the desired homogeneous multifractal spectrum.

\mk

We will first construct the HM  function $Z$ as the uniform limit of a sequence of functions $(Y_n)_{n\geq 1}$,  using  a suitable
 subsequence of functions $(g_n)_{n\geq 1}$ which will be selected from the 
 set of functions $(   f_p)_{p\geq 1}$. For the moment, we do not explain how this choice is made, we will do it at the end of this section.  By abuse of notation, we still denote by $Z_n$ the function built in Theorem \ref{proplinearspectrum} whose spectrum equals $g_n$.
Afterwards, we will explain how we choose each function $g_n$ among the functions $(   f_p)_{p\geq 1}$ in order to impose a homogeneous  multifractal  spectrum for $Z$.

\mk

We denote by $Z_{0}$ the function obtained from Proposition \ref{homone}.

Set $Y_1=Z_{0}+Z_1$. Then for all $x\in [0,1]$ we have $h_{Y_{1}}(x)\leq 1$
and the set of singularities  $\widetilde C_1=\{x\in \zu: h_{Y_1}(x) <1\}$
is located on a Cantor set.

\begin{center}
\begin{figure}
  \includegraphics[width=7.cm,height=6cm]{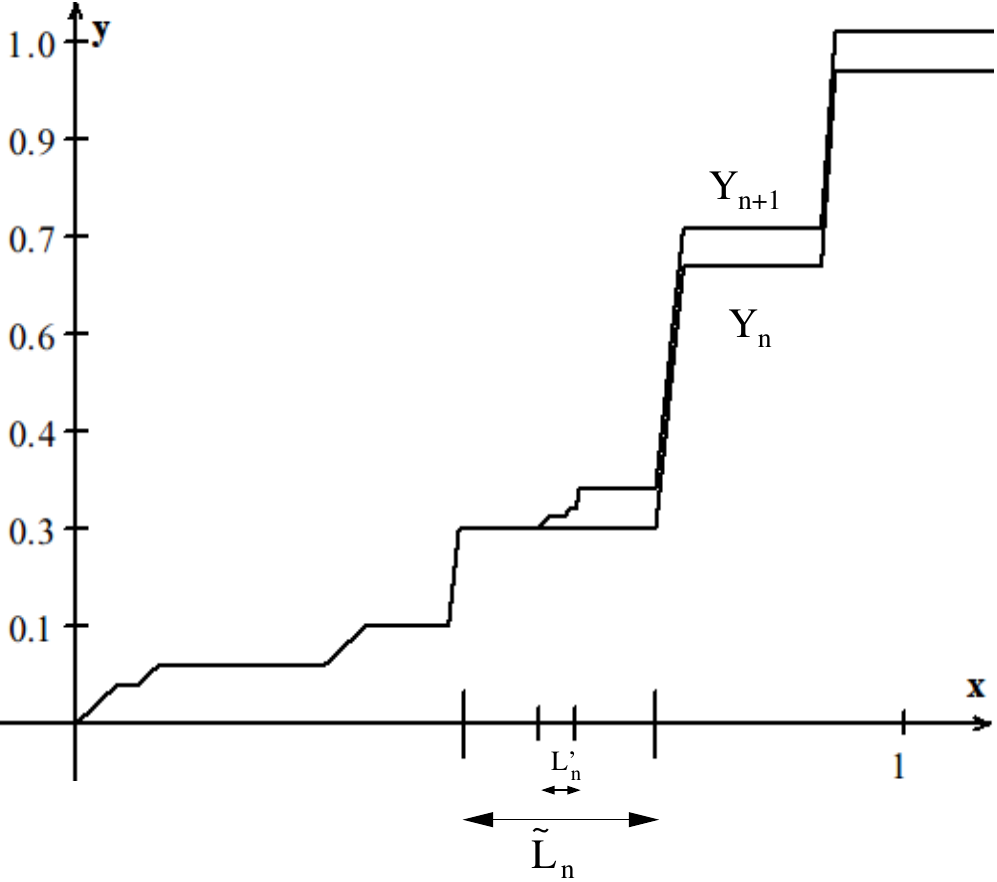}
\caption{Definition of $Y_{n}$ and $Y_{n+1}$} \label{figpsp5c}
\end{figure}
\end{center}

For every $n\geq 2$, we assume that $Y_n$ has been  built, and that the set $\widetilde C_n$ of singularities of $Y_n$, i.e. $\widetilde C_n=\{x\in \zu: h_{Y_n}(x) <1\}$ has the structure of a Cantor set : there exists a sequence of sets $(C_{n,p})_{p \geq 1}$ satisfying the following:
\begin{itemize}
\item
the $C_{n,p}$ are nested, i.e. for every $p\geq 1$, $C_{n,p+1} \subset C_{n,p}$,
\item
$C_{n,p}$ is a finite union of pairwise disjoint  closed intervals,
\item
the maximal length of the intervals in $C_{n,p}$ is strictly decreasing with $p$ and  tends to zero when $p$ tends to infinity,
\item
$\widetilde C_n = \bigcap_{p\geq 1} C_{n,p}$. 
\end{itemize}
We also assume that $h_{Y_{n}}(x)\geq \aaa_{0}$ for all $x\in [0,1].$

Then we construct $Y_{n+1}$ as follows: let $\widetilde {L}_n$ be one of the longest open intervals contiguous to $\widetilde C_n$ in $\zu$, and let $L_n'$ be concentric with $\widetilde {L}_n$ but of length $2^{-n^2}$ times  that of $\widetilde {L}_n$, i.e. $|L'_n|=2^{-n^2}|\tilde L_n|$. 

We set (see Figure \ref{figpsp5c})
\begin{equation}
\label{defzn2}
\forall \ x\in\zu, \ \ \ Y_{n+1} (x) = Y_n(x) +  2^{- n^2/ |\widetilde {L}_n |}  \cdot Z_{n+1} (S_n(x)) , 
\end{equation}
where $S_n$ on $L_n'$ is the unique increasing affine function mapping $L_n'$ to $\zu$, otherwise $S_{n}$ is continuous and constant on the components
of $\zu \sm L_n'$.

 By \eqref{minorZ} of Theorem \ref{proplinearspectrum} we have 
$h_{Y_{n+1}}(x)\geq \aaa_{0}$ for all $x\in [0,1]$.

Then $Y_{n+1}$ obviously satisfies the same properties as $Y_n$: its set of singularities $\widetilde C_{n+1}=\{x\in \zu: h_{Y_{n+1}}(x) <1\}$ has the structure of a Cantor set, since it is the union of two Cantor sets ($\widetilde C_n$ and the image of $C_{n+1}$ by $S_n^{-1}$) which ``do not cross", i.e. between any two  points of  the image of $C_{n+1}$ by $S_n^{-1}$, there is no point of $\widetilde C_n$. 
Moreover, 
in case $\widetilde C_n$ had only one contiguous interval of maximal size, $\widetilde L_n$
then the  size of one of the largest open interval in the complement of $\widetilde C_{n+1}$ is less than that of the largest open intervals in the complement of $\widetilde C_{n}$, since the interval $\widetilde {L}_n$ has been cut in many parts (at least 2).
Otherwise  the (finite) number of contiguous intervals to $\widetilde C_{n+1}$
of size $\widetilde L_n$ is one less than that for $\widetilde C_n$.

\mk

We iterate this construction.

\begin{proposition}
The sequence of functions $(Y_n)_{n\geq 1}$ converges uniformly to a continuous function $Z:\zu\to\R$.
\end{proposition}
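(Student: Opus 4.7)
The plan is to show that $(Y_n)$ is uniformly Cauchy on $[0,1]$, and then invoke the standard fact that the uniform limit of continuous functions is continuous.

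First, I would record three easy observations following directly from the construction. (a) Each function $Z_{n+1}$ produced by Theorem \ref{proplinearspectrum} is continuous, monotone, and takes values in $[0,1]$ (since $Z_{n+1}(x)=0$ for $x\leq 0$ and $Z_{n+1}(x)=1$ for $x\geq 1$). (b) The map $S_n:[0,1]\to[0,1]$ is continuous, being affine increasing on $L'_n$ and constant on each component of $[0,1]\setminus L'_n$. Consequently $Z_{n+1}\circ S_n$ is a continuous function from $[0,1]$ to $[0,1]$. (c) Since $\widetilde L_n\subset[0,1]$, we have $|\widetilde L_n|\leq 1$, so $n^2/|\widetilde L_n|\geq n^2$ and therefore
\[
2^{-n^2/|\widetilde L_n|}\ \leq\ 2^{-n^2}.
\]

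Combining these with the defining relation \eqref{defzn2} yields the pointwise estimate
\[
\sup_{x\in[0,1]}|Y_{n+1}(x)-Y_n(x)|\ =\ 2^{-n^2/|\widetilde L_n|}\sup_{x\in[0,1]}|Z_{n+1}(S_n(x))|\ \leq\ 2^{-n^2}.
\]
For any $m>n\geq 1$, a telescoping argument therefore gives
\[
\|Y_m-Y_n\|_{\infty}\ \leq\ \sum_{k=n}^{m-1}2^{-k^2}\ \leq\ \sum_{k=n}^{\infty}2^{-k^2},
\]
and the right-hand side tends to $0$ as $n\to\infty$. Hence $(Y_n)$ is uniformly Cauchy on $[0,1]$, and, as each $Y_n$ is continuous (having been built by adding a continuous function to $Y_{n-1}$, with $Y_1=Z_0+Z_1$ continuous), the uniform limit $Z:=\lim_{n\to\infty}Y_n$ exists and is continuous on $[0,1]$.

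There is no genuine obstacle here: the only point to check carefully is the uniform bound on the increment $Y_{n+1}-Y_n$, which is why the construction inserts the damping factor $2^{-n^2/|\widetilde L_n|}$. The factor is even more than sufficient for uniform convergence; its stronger role, namely to ensure that the H\"older exponents of $Z$ agree with those prescribed by the $g_n$'s and that no spurious regularity is introduced at the rescaling scales $L'_n$, is needed only for the subsequent multifractal analysis, not for the continuity statement proved here.
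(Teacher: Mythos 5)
Your proof is correct and takes essentially the same route as the paper, which also reduces the claim to the convergence of the series $\sum_{n\geq 1} 2^{-n^2/|\widetilde L_n|}$; you simply make the dominating estimate $2^{-n^2/|\widetilde L_n|}\leq 2^{-n^2}$ and the telescoping step explicit.
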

\begin{proof}
Using the definition of $Y_n$  in \eqref{defzn2},  this follows from the fact that 
$0<|\widetilde{L}_n|<1$ is non-increasing, and that 
the series $\sum _{n\geq 1} 2^{- n^2/ |\widetilde {L}_n |} $ converges.  \end{proof}

In addition, the sequence of lengths $(|\widetilde {L}_n|)_{n\geq 1}$ is not only non-increasing, but it also tends to zero. Indeed, at a fixed step $n$, the set of intervals in the complementary set of $\widetilde {C}_n$ whose size is between  $|\widetilde L_n|$  and $|\widetilde L_n|/2$ is finite. Since any further step divides by  more than two the size of (at least one of) the maximal interval(s), after a finite number of steps, the size  of the maximal interval(s) in the complement of  $\widetilde {C}_{n+n'}$ will be less than $|\widetilde L_n|/2$. Hence, $(|\widetilde {L}_n |)_{n\geq 1}$ converges to zero when $n$ tends to infinity.

\mk

Since we are adding monotone functions the oscillation of $Y_n$ on a given interval $I$ can only increase when $n$ increases. One  consequence is that for each $x \in   \widetilde C_N$, the  \ho exponent of $Z$ at $x$ is not larger than the \ho exponent of $Y_N$ at $x$.

Moreover, using \eqref{minorZ}, all the \ho exponents of $Z$ are larger than $\alpha_0$.

Hence the multifractal spectrum of $Z$ has a support included in  $[\alpha_0,+\infty]$.

\mk

We now prove the key proposition to obtain Theorem \ref{th1}: it asserts that the set  of those points where the  \ho exponent can be altered during the iterative construction of the functions $Y_{n}$ has Hausdorff dimension $0$.

\begin{proposition} For every $N\geq 2$, the  Hausdorff dimension of the set
$$
\widetilde{\mathcal{F}}_{N}=
 \{x\in [0,1] : h_Z(x) < h_{Y_N}(x) \}$$
is zero. 
\end{proposition}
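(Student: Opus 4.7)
The plan is to exploit the super-exponential decay of the perturbations $c_n := 2^{-n^2/|\widetilde L_n|}$ and the localisation of each added summand inside the tiny interval $L'_n$, thereby squeezing the exceptional set into a $\limsup$ of intervals of rapidly decreasing length.

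I would first write $Z = Y_N + T_N$ with $T_N := \sum_{n\geq N} c_n\, Z_{n+1}\circ S_n$. Both $Y_N$ and $T_N$ are sums of non-decreasing functions, so their oscillations on every ball add pointwise and the pointwise Hölder exponent of such a sum is the infimum of the individual exponents. This yields
$$h_Z(x) \;=\; \min\bigl(h_{Y_N}(x),\ h_{T_N}(x)\bigr),$$
together with the upper bound $h_{Y_N}(x)\leq 1$ everywhere, which follows from $Y_N \geq Z_0$ and $h_{Z_0}\equiv 1$ (Proposition~\ref{homone}) by the same minimum rule. Hence $\widetilde{\mathcal F}_N \subseteq \{x: h_{T_N}(x)<1\}$, and the task reduces to proving that the right-hand side has Hausdorff dimension $0$.

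Next I would identify which $x$ can have $h_{T_N}(x)<1$. Each summand $c_n\, Z_{n+1}\circ S_n$ is locally constant off $L'_n$ and has total variation $c_n$, while inside $L'_n$ its Hölder exponent at $x$ equals $h_{Z_{n+1}}(S_n(x))$, which by item~(i) of Theorem~\ref{proplinearspectrum} is $+\infty$ away from the singular Cantor set $\mathcal C_{n+1}$ and lies in $[\alpha_0,\beta_0]\subset (0,1)$ on it. The minimum rule applied to $T_N$ therefore yields the coarse inclusion
$$\{x:h_{T_N}(x)<1\}\ \subseteq\ \bigcup_{m\geq N}\bigl(L'_m\cap S_m^{-1}(\mathcal C_{m+1})\bigr).$$
The crucial strengthening is to pass from this union to a $\limsup$: using the ``longest contiguous gap'' selection rule defining $\widetilde L_n$, together with the fact that $|\widetilde L_n|\to 0$ (proved in the paragraph following the definition of $Y_{n+1}$), every point $x$ of a Cantor piece $L'_m\cap S_m^{-1}(\mathcal C_{m+1})$ is accumulated by contiguous gaps of that newly-inserted Cantor structure, and each such gap is eventually selected as some later $\widetilde L_{m'}$, placing a new tiny interval $L'_{m'}$ arbitrarily close to $x$. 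Thickening to $L''_n := L'_n + B(0, c_n)$, combined with the Hölder bound~\eqref{minorZ} applied to the finitely many intermediate summands, then gives $\widetilde{\mathcal F}_N \subseteq \limsup_n L''_n$.

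Finally, since $|L'_n| = 2^{-n^2}|\widetilde L_n|\leq 2^{-n^2}$ and $c_n \leq 2^{-n^2}$ (because $|\widetilde L_n|\leq 1$ forces $n^2/|\widetilde L_n|\geq n^2$), we have $|L''_n|\leq 3\cdot 2^{-n^2}$, so $\sum_n |L''_n|^s \leq 3^s\sum_n 2^{-sn^2}<\infty$ for every $s>0$. The standard covering bound
$$\mathcal H^s_\delta(\limsup_n L''_n)\ \leq\ \sum_{n\geq M}|L''_n|^s\ \xrightarrow[\,M\to\infty\,]{}\ 0$$
then gives $\dim_H(\limsup_n L''_n)=0$, and hence $\dim_H \widetilde{\mathcal F}_N = 0$.

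The main obstacle is the limsup-upgrade step in the second paragraph: the pointwise singular pieces $L'_m \cap S_m^{-1}(\mathcal C_{m+1})$ have positive dimension a priori (of order $d(1+\eta\beta_0)$), so it is only the combinatorial/quantitative fact that every exceptional $x$ is visited by infinitely many thickened intervals $L''_{m'}$ that collapses the dimension to zero. This requires a careful analysis of the order in which the shrinking longest gaps are exhausted, together with the redundancy in the construction of $(g_n)$ (each $f_p$ appearing infinitely often), so that the $L'_{m'}$'s synchronise with the nested gap structure of the inserted Cantor sets and accumulate near every singular point.
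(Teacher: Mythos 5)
Your opening reduction via the minimum rule $h_Z(x)=\min(h_{Y_N}(x),h_{T_N}(x))$ is valid (oscillations of non-decreasing functions on an interval add) and is a cleaner entry point than the paper's, which instead extracts the quantitative oscillation inequality $\om_{B(x,r)}(Z-Y_N)>r$ directly from the hypothesis $h_Z(x)<h_{Y_N}(x)$. But the argument breaks at exactly the two steps you flag as delicate. The "coarse inclusion" $\{x:h_{T_N}(x)<1\}\subseteq\bigcup_{m\geq N}(L'_m\cap S_m^{-1}(\mathcal C_{m+1}))$ is not an inclusion: a point $x$ lying outside every $L'_m$ can still have $h_{T_N}(x)<1$ when its distance to $L'_m$ is of order $c_m$ for infinitely many $m$, since those nearby amplitudes then dominate on balls of the right radius. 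More seriously, that right-hand side has positive Hausdorff dimension, so there is no way to force it into a dimension-zero limsup, and the limsup-upgrade claim is simply false for generic points of the Cantor pieces. If $x\in L'_m\cap S_m^{-1}(\mathcal C_{m+1})$ and $G$ is a contiguous gap of that inserted Cantor set touching $x$, the later interval $L'_{m'}$ (when $G$ is chosen as $\widetilde L_{m'}$) sits \emph{concentric} in $G$, hence $\mathrm{dist}(x,L'_{m'})\asymp |G|$, whereas the thickening you allow is only of order $c_{m'}$ or $|L'_{m'}|\asymp 2^{-(m')^2}|G|$, which is super-exponentially smaller than $|G|$. "Arbitrarily close" at gap scale is nowhere near close enough; no amount of careful bookkeeping of the longest-gap selection order can fix this, because the claim itself is quantitatively off by a factor $2^{(m')^2}$.

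The mechanism you are missing, and which the paper's proof supplies, is the case split on the \emph{scale} $r$ at which the oscillation discrepancy occurs. Taking $n$ minimal with $B(x,r)\cap L'_n\neq\emptyset$, one shows that $r>|L'_n|$ is impossible: the total extra oscillation contributed on $B(x,r)$ by all the summands beyond $Y_n$ is at most $2c_n=2\cdot 2^{-n^2/|\widetilde L_n|}$, which is $\ll |L'_n|\leq r$, and $\om_{B(x,r)}(Y_n)=\om_{B(x,r)}(Y_N)$ by minimality of $n$; together these contradict $\om_{B(x,r)}(Z-Y_N)>r$. So $r\leq |L'_n|$, which, combined with $B(x,r)\cap L'_n\neq\emptyset$, pins $x$ to the tripled interval $L''_n$. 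This quantitative matching of $r$ against $|L'_n|$ is what produces the $\limsup$ containment; it is an oscillation estimate, not a consequence of the combinatorics of gap selection or the redundancy in the choice of the $(g_n)$.
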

\begin{proof}

Let us choose an $x\in [0,1]$ with $h_{Z}(x)<h_{Y_{N}}(x)\leq 1$. This means that $h_{Y_N}(x)$ has a  value, greater than $\alpha_0$ and not exceeding $1$. Let $n>N$. The maximal value of the oscillation of the contribution of $Y_n$ is  $2^{-n^2/|\widetilde L_n|}$. Moreover, all the further contributions of the $Y_{n'}$, for $n'\geq n$, are all of magnitudes  less than $2^{-(n')^2/|\widetilde L_{n'}|}$ (which itself is  less than $2^{-(n')^2/|\widetilde L_{n}|}$),  so the sum of all the maximal oscillations 
is less than  $2\cdot 2^{-n^2/|\widetilde L_n|}$.

One knows that for every $r$ small enough, 
\begin{equation}\label{*oohyn}
\omega_{B(x,r)}(Y_{N}) \leq r^{h_{Y_N}(x)-\ep}.
\end{equation}
 We assumed that  $ h_Z(x) < h_{Y_N}(x)$, and let $\ep >0$ be such that $ h_Z(x)  +3\ep < h_{Y_N}(x) $. Necessarily, for some small values for $r$,  one must have 
$$\omega_{B(x,r)}(Z) \geq r^{h_{Z }(x)+\ep}> r^{h_{Y_{N}}(x)-2\eee}>
2 r^{h_{Y_{N}}(x)-\eee}\text{ and }r^{h_{Y_{N} }(x)+\ep}/2\geq r^{h_{Y_{N}}(x)+2\eee}.$$ This 
and \eqref{*oohyn}
imply that for some small values of $r>0$
\begin{equation}
\label{eq4}
\om_{B(x,r)} ( Z-Y_N) \geq r^{h_{Z }(x)+\ep}/2 \geq r^{h_{Z }(x)+2\eee}>r.
\end{equation}

In order to modify the oscillation of $Y_{N}$ on $B(x,r)$, the ball
  $B(x,r)$ should intersect at least one of the intervals 
$L_n'$ for an $n\geq N+1$. 
Let $n\geq N+1$ be the minimal integer such that 
$B(x,r ) \cap L_n'\neq \emptyset$.   
The maximal  possible value of the oscillation of the function
$2^{- n^2/ |\widetilde {L}_{n} |}  \cdot  Z_{n+1} (S_{n}(x))$ (which is 
added at step $n+1$ to construct $Y_{n+1}$ from $Y_{n}$)  equals 
 $2^{- n^2/ |\widetilde {L}_{n} |} $.
This oscillation is obtained  on the interval $L_{n}'$  of length $2^{- n^2}|\widetilde{L}_{n}|$. The difference between $\om_{B(x,r)}(Y_{n})$ and  $\om_{B(x,r)}(Y_{n+1})$ 
is at most
$2^{- n^2/ |\widetilde {L}_{n } |} $. Using the remark above,  summing all the further oscillations (for $n'\geq n+1$) does not change much the size of $\om_{B(x,r)}(Y_{n})$.

Assume that $r> |L_n'| = 2^{- n^2}|\widetilde{L}_{n}|$. 
Also recall that $|\widetilde{L}_{n}|\leq 1/10.$
The difference between $\om_{B(x,r)}  (Y_{n})  $ and  $\om_{ B(x,r)}(Z )$ 
is at most
$$2\cdot 2^{- n^2/ |\widetilde {L}_{n } |}       <\!\!\!<
2^{-n^{2}+\log_{2}|\widetilde{L}_{n}|}=
 2^{- n^2}|\widetilde{L}_{n } |  = |L_n'| \leq r  . $$

Therefore, we have  $|\om_{ B(x,r)}(Z ) - \om_{B(x,r)}  (Y_{n}) | \leq r$, $|\om_{ B(x,r)}(Y_n ) - \om_{B(x,r)}  (Y_{N}) |  = 0$.
 This contradicts  \eqref{eq4}.
 
Hence, we need to consider the case $r\leq |L_n'|$ and $B(x,r)\cap L_n'\not=\ess$.
This
 implies that $x$ must belong to the interval concentric with $L_n'$, but of
 length $3 |L_n'|$. Let us call these intervals $L_n''$.

In order to change at $x$ the oscillation of $Z$  compared to that of $Y_{N}$,  the point
 $x$ must belong to  an infinite number of intervals   $L_n''$. Let us now find an upper-bound for the dimension of the set $\mathcal{S}$ of such points.
 For any $s>0$,  if $\eta_n = |L_n''|$, the union $\bigcup_{n'\geq n} L_{n'}''$  forms 
 an $\eta_n$-covering of $\mathcal{S}$, thus  we have
$$\mathcal{H}^s_{\eta_n} (\mathcal{S}) \leq \sum_{n'\geq n}| L_{n'}''|^s \leq  3^s \sum_{{n'}\geq n }| 2^{- {n'}^2/ |\widetilde {L}_{{n'} }|} |^s \leq  3^s \sum_{{n'}\geq n}| 2^{- {n'}^2/ |\widetilde {L}_{N } |} |^s  . $$
This last sum obviously converges for any value of $s>0$. Hence $\mathcal{H}^s  (\mathcal{S}) =0$ and $\dim(\mathcal{S}) \leq 0$.
Since $\widetilde{\mathcal{F}}_{N}\sse \mathcal{S}$ we proved the proposition.
\end{proof}

By taking a countable union of sets of dimension zero, we obtain:

\begin{corollary} Let $\mathcal{F}= \cup_{N=2}^{\oo}\widetilde{\mathcal{F}}_{N}$. Then $\mathcal{F}$ is
the subset of $[0,1]$ for which the exponent of $Z$ at $x$ is modified due to our ``scheme of iteration" and  $\dim \mathcal{F} = 0$.\end{corollary}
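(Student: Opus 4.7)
The plan is to prove the corollary as a short formal consequence of the preceding proposition, with one conceptual verification to make. The substantive work (controlling oscillations added at each stage, the argument that $x$ must belong to infinitely many intervals $L_n''$, the Hausdorff dimension bound for that $\limsup$-set) is already carried out in the proof of the proposition that $\dim \widetilde{\mathcal{F}}_N = 0$ for each $N \geq 2$. The corollary only has to combine these statements.

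The first step is to invoke countable stability of Hausdorff dimension: for any countable family $\{A_k\}_{k\geq 1}$ of subsets of $\R$,
\begin{equation*}
\dim\Big(\bigcup_{k\geq 1} A_k\Big) = \sup_{k\geq 1} \dim A_k,
\end{equation*}
a standard fact (see, e.g., \cite{F2}). Applying this to $\mathcal{F} = \bigcup_{N\geq 2} \widetilde{\mathcal{F}}_N$ and using the preceding proposition, which gives $\dim \widetilde{\mathcal{F}}_N = 0$ for every $N\geq 2$, yields $\dim \mathcal{F} = 0$ immediately.

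The second step is the conceptual verification that $\mathcal{F}$ really is the set of points at which the iterative scheme alters the H\"older exponent. Since each increment $Y_{n+1} - Y_n$ is a non-negative monotone function (it is a positive scalar multiple of $Z_{n+1}\circ S_n$), the oscillation $\omega_{B(x,r)}(Y_n)$ is non-decreasing in $n$, so by passing to the limit $\omega_{B(x,r)}(Z) \geq \omega_{B(x,r)}(Y_N)$ for every $N$ and every ball. Taking $\liminf$ in $r$ we obtain $h_Z(x) \leq h_{Y_N}(x)$ for all $N$ and all $x$. Therefore the exponent at $x$ is modified by the scheme (compared with the finite stage $Y_N$) precisely when the inequality is strict for some $N$, i.e.\ when $x \in \widetilde{\mathcal{F}}_N$ for some $N$, which is exactly $\mathcal{F}$.

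There is no real obstacle here: the only thing to be careful about is that we are indexing $N \geq 2$ (because this is where the proposition is stated), and that we are not losing the points where modification already occurred at the very first stage $N=1$; but $\widetilde{\mathcal{F}}_1 \subset \widetilde{\mathcal{F}}_2$ holds automatically, since $h_Z(x) < h_{Y_1}(x) \leq h_{Y_2}(x)$ implies $h_Z(x) < h_{Y_2}(x)$, so no exceptional point is missed. The corollary then follows in two lines.
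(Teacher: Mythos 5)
Your approach is essentially the paper's: apply countable stability of Hausdorff dimension to $\mathcal{F}=\bigcup_{N\geq 2}\widetilde{\mathcal{F}}_N$, using the preceding proposition's conclusion $\dim \widetilde{\mathcal{F}}_N = 0$, and identify $\mathcal{F}$ as the modified set via the monotonicity observation $h_Z(x)\leq h_{Y_N}(x)$ for all $N$. The paper's own proof is exactly the one-line countable union argument.

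However, your final paragraph contains a reversed inequality. You assert $h_{Y_1}(x)\leq h_{Y_2}(x)$, but the monotonicity of oscillations you correctly used two sentences earlier gives the opposite: since $Y_{n+1}-Y_n$ is a non-negative non-decreasing function, $\omega_{B(x,r)}(Y_{n+1})\geq\omega_{B(x,r)}(Y_n)$, and because $\log r<0$ for small $r$, larger oscillation forces a \emph{smaller} exponent, i.e.\ $h_{Y_{n+1}}(x)\leq h_{Y_n}(x)$. Consequently the sets $\widetilde{\mathcal{F}}_N=\{x:h_Z(x)<h_{Y_N}(x)\}$ are nested \emph{decreasing} in $N$, so in fact $\mathcal{F}=\widetilde{\mathcal{F}}_2$, and the inclusion $\widetilde{\mathcal{F}}_1\subset\widetilde{\mathcal{F}}_2$ you claim is the wrong way round (one has $\widetilde{\mathcal{F}}_2\subseteq\widetilde{\mathcal{F}}_1$). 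This error does not undermine the conclusion $\dim\mathcal{F}=0$, which follows from the countable stability step alone, but the argument you offer for ``not losing the points from stage $N=1$'' is incorrect as written and should be removed or replaced by the observation that the corollary, as stated, concerns only $N\geq 2$.
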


\mk

We finish by explaining the choice of the functions $(f_n)_{n\geq 1}$ in the construction of the function $Z$. This sequence is obtained recursively:
\begin{itemize}
\item
{\bf Step $1$:} We use $g_n=   f_1$ until each dyadic interval $I_{1,k}$, for $k=0,1$ contains a copy of $Z_1$.

\item
{\bf Step $2$:} We use $g_n = f_1$ until each dyadic interval $I_{2,k}$, for $k=0,\cdots, 2^2-1$ contains a copy of $Z_1$. Then we use $f_n = g_2$ until each dyadic interval $I_{2,k}$, for $k=0,\cdots, 2^2-1$ contains a copy of $Z_2$.

\item
...

\item
{\bf Step $p$:} We use $g_n = f_1$ until each dyadic interval $I_{p,k}$, for $k=0,\cdots, 2^p-1$ contains a copy of $Z_1$. Then we use $g_n = f_2$ until each dyadic interval $I_{p,k}$, for $k=0,\cdots, 2^p-1$ contains a copy of $Z_2$ ....  Finally we use $g_n = f_p$ until each dyadic interval $I_{p,k}$, for $k=0,\cdots, 2^p-1$ contains a copy of $Z_p$. 
\item ...
\end{itemize} 
At the end of the construction, we obviously have the following property: Any non-trivial interval $I\subset \zu$ contains a copy  of any function $f_p$. 
If $h\in \text{Support}(f)$ and $f(h)>0$ then from $\dim \mathcal{F} =0$
it follows that $d_{Z}(h)=f(h).$ If $f(h)=0$ and $h\geq \aaa_{0}$
then Theorem \ref{thdarbouxb} can be used to verify that $d_{Z}(h)=f(h)=0.$ Indeed, $h_{Z}(x)\leq h_{Y_{N}}(x)$ for any $x\in [0,1]$
and $N\in \N$. By the choice of $\aaa_{0}$ in \eqref{*aodef}
and by construction $\{ x:h_{Z}(x)\leq \aaa_{0}+\eee \}$
is dense in $[0,1]$
 for any $\eee>0$. Moreover, $h_{Z}(x)\geq \aaa_{0}$ for any
 $x\in [0,1].$
 The existence and the value of the rest of the spectrum (i.e. the exponents $h$ for which $d_Z(h) = 0$) are obtained  by combining  the results of the previous section.

\section{Multifractal  spectrum  prescription of a HM (non-monotone) function}
\label{sec_cons4}

Theorem \ref{th3} is simply the consequence of the following  Theorem proved in \cite{BS1}. Let $(\psi_{j,k})_{j\in \Z,k\in \Z}$ be any orthogonal  wavelet basis of $L^2(\R)$ (see for instance \cite{MEYER} for the existence and the construction of wavelet bases).

\begin{theorem}
\label{thbs}
Let $\mu$ be a measure on $\zu$,    $0<\alpha <\beta$. Consider the wavelet series
$$F_\mu(x) = \sum_{j\geq 1} \sum_{k=0}^{2^j -1}  \  d_{j,k} \  \psi_{j,k}(x),$$
where the wavelet coefficients of $F_\mu$ are defined by
$d_{j,k} = 2^{- j \alpha} \mu\left(   I_{j,k} \right)^{\beta -\alpha}.$

Then for every $x\in \zu$, 
 $h_{F_\mu}(x)  =  \alpha+ (\beta -\alpha) \cdot h_\mu(x).$
 
This implies that for every exponent $h$ such that  $\frac{h-\alpha}{\beta-\alpha} $ belongs to $  \mbox{Support\,($d_\mu$)}$, one has
$ d_{F_\mu}(h) = d_\mu \left( \frac{h-\alpha}{\beta-\alpha} \right).$
\end{theorem}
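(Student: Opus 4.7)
My plan is to reduce the pointwise exponent computation to Jaffard's two-sided wavelet characterization of pointwise Hölder regularity, which states that for a smooth enough orthonormal wavelet basis: (a) if $|d_{j,k}|\leq C\,2^{-j\gamma}(1+|2^j x_0-k|)^{\gamma}$ for all $(j,k)$, then $F\in C^\gamma_{x_0}$ (modulo a logarithmic loss that can be absorbed by decreasing $\gamma$ slightly); and conversely (b) if $F\in C^\gamma_{x_0}$, then $|d_{j,k}|\leq C\,2^{-j\gamma}(1+|2^j x_0-k|)^{\gamma}$. The strategy is to translate the liminf definition of $h_\mu(x_0)$ into two-sided estimates on the coefficients $d_{j,k}=2^{-j\alpha}\mu(I_{j,k})^{\beta-\alpha}$ and then feed these into (a) and (b).

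Fix $x_0\in\zu$ and write $h=h_\mu(x_0)$. For the lower bound $h_{F_\mu}(x_0)\geq \alpha+(\beta-\alpha)h$, I would use that for every $\varepsilon>0$ and every $r$ sufficiently small, $\mu(B(x_0,r))\leq r^{h-\varepsilon}$. For any dyadic interval $I_{j,k}$, if $\ell=\lfloor|2^j x_0-k|\rfloor$, then $I_{j,k}\subset B(x_0,(\ell+2)2^{-j})$ and hence
\begin{equation*}
\mu(I_{j,k})\leq\bigl((\ell+2)\,2^{-j}\bigr)^{h-\varepsilon},\qquad |d_{j,k}|\leq C\,(1+|2^j x_0-k|)^{(\beta-\alpha)(h-\varepsilon)}\,2^{-j\gamma_\varepsilon},
\end{equation*}
with $\gamma_\varepsilon=\alpha+(\beta-\alpha)(h-\varepsilon)$. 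Since $(\beta-\alpha)(h-\varepsilon)<\gamma_\varepsilon$ (because $\alpha>0$), the polynomial growth in $|2^j x_0-k|$ is at most $\gamma_\varepsilon$, so (a) applies and $F_\mu\in C^{\gamma_\varepsilon-\delta}_{x_0}$ for any $\delta>0$. Letting $\varepsilon,\delta\to 0$ gives $h_{F_\mu}(x_0)\geq \alpha+(\beta-\alpha)h$.

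For the upper bound $h_{F_\mu}(x_0)\leq \alpha+(\beta-\alpha)h$, I would exploit that by definition of the liminf there is a sequence $r_n\to 0$ with $\mu(B(x_0,r_n))\geq r_n^{h+\varepsilon}$. Choose $j_n$ with $2^{-j_n}\leq r_n<2^{-j_n+1}$; the ball $B(x_0,r_n)$ is covered by at most three adjacent dyadic intervals at scale $j_n$, so at least one of them, call it $I_{j_n,k_n}$, satisfies $\mu(I_{j_n,k_n})\geq r_n^{h+\varepsilon}/3$ and $|2^{j_n}x_0-k_n|\leq 2$. Therefore
\begin{equation*}
|d_{j_n,k_n}|\;\geq\; C\,2^{-j_n[\alpha+(\beta-\alpha)(h+\varepsilon)]}.
\end{equation*}
If $F_\mu\in C^\gamma_{x_0}$, then (b) combined with the uniform bound $|2^{j_n}x_0-k_n|\leq 2$ forces $|d_{j_n,k_n}|\leq C'\,2^{-j_n\gamma}$, which implies $\gamma\leq \alpha+(\beta-\alpha)(h+\varepsilon)$. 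Sending $\varepsilon\to 0$ yields the matching upper bound, hence the pointwise identity $h_{F_\mu}(x_0)=\alpha+(\beta-\alpha)h_\mu(x_0)$. The spectrum formula follows because this identity provides a bijection between the level sets, $E_{F_\mu}(h)=E_\mu\bigl(\tfrac{h-\alpha}{\beta-\alpha}\bigr)$, so their Hausdorff dimensions coincide.

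The main subtlety I expect is the validity of (a) and (b) when $\beta>1$, since then \eqref{defpoint} involves a polynomial trend of degree $<\lfloor\gamma\rfloor$ that must be absorbed into the low-frequency wavelet components; this is the technical core of Jaffard's theorem and requires the wavelet to be sufficiently smooth with enough vanishing moments. A secondary point is the possible logarithmic gap between the two-sided wavelet criterion and the pointwise Hölder condition, which is harmless here because we only need the exponent up to arbitrary $\varepsilon$. Since the detailed verification of Jaffard's criterion is exactly the content cited from \cite{BS1} and ultimately \cite{J2}, I would treat it as a black box and keep the focus on the clean dictionary between measure regularity and coefficient size provided by $d_{j,k}=2^{-j\alpha}\mu(I_{j,k})^{\beta-\alpha}$.
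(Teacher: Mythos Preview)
The paper does not prove this theorem: it is quoted verbatim from \cite{BS1} and used as a black box to derive Theorem~\ref{th3}. Your sketch follows exactly the route taken in \cite{BS1}, namely Jaffard's two-sided wavelet characterization of pointwise regularity translated via the dictionary $d_{j,k}=2^{-j\alpha}\mu(I_{j,k})^{\beta-\alpha}$, so there is no alternative approach to compare against.

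Two small glosses worth tightening. In the lower-bound step, the inequality $\mu(B(x_0,r))\le r^{h-\varepsilon}$ holds only for $r$ below some threshold $r_0$; for dyadic intervals $I_{j,k}$ with $(\ell+2)2^{-j}\ge r_0$ you must instead use the trivial bound $\mu(I_{j,k})\le 1$, giving $|d_{j,k}|\le 2^{-j\alpha}$, and then check that this is still dominated by $C\,2^{-j\gamma_\varepsilon}(1+\ell)^{\gamma_\varepsilon}$ because in that regime $1+\ell\gtrsim 2^{j}$ and $\gamma_\varepsilon>\gamma_\varepsilon-\alpha$. In the upper-bound step, a ball of diameter $2r_n<4\cdot 2^{-j_n}$ can meet up to five (not three) dyadic intervals at scale $j_n$; this only affects the constant in front of $r_n^{h+\varepsilon}$ and changes nothing in the conclusion.
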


Theorem \ref{thbs} combined with Theorem \ref{th11} yields Theorem \ref{th3}.

\section{A HM measure with a spectrum gap when $h>1$}
\label{sec_noncontinuous}


We prove Proposition \ref{*spectrgap}. Our construction is very technical, but we do not know any other "easy" construction of such a measure. Actually we will rather build a continuous increasing HM function.

 \begin{definition} 
 Given a non-degenerate closed interval $J=[a,b]$ we call $\fff:J\to\R$ a Cantor-type
interpolating function if $\fff$ is continuous,   non-decreasing and
there exists a closed set $E_{\fff}$ of zero Lebesgue measure  
such that $\fff$ is constant on the intervals contiguous to
$E_{\fff}$. Moreover, it has a multifractal spectrum
as bad as possible, that is $d_{\fff}(h)=h$, for all $h\in [0,1]$.
 \end{definition}
 
 \begin{remark}
 It is not too difficult to provide direct constructions of such functions,
 but using Theorem \ref{proplinearspectrum} we can easily verify their existence.
  Indeed, consider sequences $\aaa_{0,n}$, $\bbb_{0,n},
 $ $d_{n}$ and $\eta_{n}$ such that all possible rational values of parameters
 satisfying the assumptions of Theorem \ref{proplinearspectrum} appear among them.
Denote by $Z_{n}$ the sequence of monotone continuous functions  which we obtain
from Theorem \ref{proplinearspectrum} by using these parameter values. Set $\fff(a)=0$ and $$\fff(x)=\frac{1}{2^{n-1}}Z_{n}\Big (\Big (\frac{b-a}{n}-\frac{b-a}{n+1}\Big )^{-1}\Big (x-\frac{b-a}{n+1}\Big )\Big )$$ $$\text{ for }
x\in \Big (\frac{b-a}{n+1},\frac{b-a}{n}\Big ], \  n=1,2,... .$$
\end{remark}

\begin{definition} 
A finite  set of real numbers  $S =\{ s_{1},...,s_{n} \}\sse (0,1)$ is said to be $\ddd $-discrete (with $\ddd >0$) if
\begin{align}\label{*V2*2}
&\text{the distance between any two intervals $[s_{i}-\ddd ,s_{i}+\ddd ]$, $i=1,...,n$,}\\ \nonumber
&
\text{is larger than $2\ddd $, and the distance of these intervals from $0$, or from $1$}\\ \nonumber
&\text{is also larger than $2\ddd$.}
\end{align}

\end{definition}

Now we are turning to the proof of Proposition \ref{*spectrgap}.
  We are going to construct the HM measure $\mmm$ by defining its
Borel integral $F(x)=\int_{0}^{x}d\mmm$.
The  function
$F$ will be an infinite  sum of monotone increasing continuous functions $F_{n}$.

\subsection{First part of the definition of the functions $F_{n}$ by induction}

We introduce several sequences of  numbers, sets and functions, which will be the basis of our induction.

First,  we fix a sequence of intervals $(I_{n}:=[a_{n},b_{n}])_{n\geq 1}$ satisfying $I_n \sse I_1:=(0,1)$, 
$b_{n}-a_{n}\searrow 0$ and $\{ a_{n}:n=1,... \}$
is dense in $[0,1]$. The sequence of intervals $(I_n)$ is thus also dense.

\medskip

We start with  $\ddd_{0}=1$, $S_{0}=\ess$, $H_{0}=\ess$,
$I_{1}'=[0,1]$, $T_{0}=\ess$, $\widetilde I_{1}=I_{1}$.

We assumme that for some integer $n\geq0$, we have built the following:
\begin{enumerate}

\smallskip \item  $n+1$ real numbers $\ddd_{0}>...>\ddd_{n} >0$, satisfying for $1\leq p\leq n$
\begin{equation}\label{*V8*2}
(  {2^{-p}}+
p )    \ddd_{p}^{2}  <\frac{\ddd_{p-1}^{2}}{2}.
\end{equation}
(We remark that for the case $n=0$ here and later there
is no $p$ satisfying $1\leq p\leq n$, which means that in the $n=0$ case
these assumptions are not needed.)

\smallskip\item
for $1\leq p\leq n$, an increasing sequence of   finite sets  of points $S_{p}=\{ s_{1},...,s_{p} \}\sse (0,1)$  that are $\ddd_{p}$-discrete and such that 
$s_{p}\in I_{p}$. We put
\begin{equation}
\label{defHi}
H_{p}=\bigcup_{i=1}^{p}  \ [ s_i-\ddd_{p},
s_{i}+\ddd_{p}] .
\end{equation}

\smallskip\item
 closed intervals 
$(\widetilde I_{p})_{p = 1,...,n+1}$, such that $\widetilde I_{p} \sse I_{p}$ and  for $1\leq p \leq n$, 
\begin{equation}\label{*V2*1}
\text{$\widetilde I_{p+1} \sm H_{p}$ and $I_{p+2}\sm H_{p}$ contain closed
intervals of length larger than $2\ddd_{p}$}. 
\end{equation}

\smallskip\item     monotone non-decreasing continuous functions $F_{p}:\zu\to \zu$, $1\leq p\leq n$ satisfying:

\begin{enumerate}

\smallskip\item  $F_{p}$ is constant on the intervals $[s_{i}-\ddd_{p},s_{i}+\ddd_{p}/2]$, $i=1,...,p$,  

 \smallskip\item  on the intervals $[s_{i}+\ddd_{p}/2,s_{i}+\ddd_{p}]$, $i=1,...,p$,  
  the function $F_{p}$ coincides with a Cantor-type interpolating function
 whose increment on this interval is given by
\begin{equation*}
F_{p}(s_{i}+\ddd_{p})-F_{p}(s_{i}+\ddd_{p}/2)=\ddd_{p}^{2}\text{ for $i=1,...,p$}.
\end{equation*}
We call    $T_{p,i}$ the nowhere dense closed set associated with the restriction of $F_p$ to  $[s_{i}+\ddd_{p}/2,s_{i}+\ddd_{p}]$, and   $T_p= \bigcup_{i=1}^p T_{p,i}$. By construction,  $F_{p}$ is   constant on the intervals
contiguous to    $T_p$.

  \smallskip\item for all $x\not \in H_{p}$    there exists an interval $I_{x,p}=
[a_{x,p},b_{x,p}]$  such that 
\begin{equation}\label{*V3*2}
x\in I_{x,p},\ b_{x,p}-a_{x,p}<\ddd_{p}  \ \text{ and } \ 
F_{p}(b_{x,p})-F_{n}(a_{x,p})\geq(b_{x,p}-a_{x,p})^{1+\frac{1}{p}}.
\end{equation}
 
 \smallskip\item  for $1\leq p <i \leq n $, $F_{p}$ is constant on the  intervals of $H_{i}$  (see \eqref{defHi}). In other words, $H_n$ "avoids" the Cantor sets $T_p$, $p\leq n-1$ of zero Lebesgue measure where the functions $F_p$ increase.

\smallskip\item
finally, for $1\leq p \leq n$, 
\begin{equation}\label{*V8*3bis}
F_{p}(1)-F_{p}(0)\leq p\ddd_{p}^{2}+\frac{\ddd_{p}^{2}}{2^{p}}.
\end{equation}

\end{enumerate}

\end{enumerate}

Observe that  part {(iii)}  implies that  $S=\cup_{n}S_{n}$ is dense in $[0,1]$.

\subsection{The next step of the induction}

Suppose $n\geq 0$. We need to define $s_{n+1}$, $\ddd_{n+1}$, $\widetilde I_{n+2}$, $T_{n+1}$
and $F_{n+1}$. 
We assume that 
\begin{equation*}
\frac{\ddd_{n+1}^{2}}{2^{n+1}}+
(n+1)\ddd_{n+1}^{2}<\frac{\ddd_{n}^{2}}{2}.
\end{equation*}

Using \eqref{*V2*1} we  select  a closed subinterval  $\widetilde I_{n+2}\subset I_{n+2}\sm H_{n}$ 
of length $2\ddd_{n}$. Then, 
\begin{equation}\label{*sn1val*}
\text{in the interior of $\widetilde  I_{n+1}\sm H_{n}$, we select a point
$s_{n+1}\not \in T_{n}$.} 
\end{equation} 
Hence, by choosing a sufficiently small $0 < \ddd_{n+1}<\ddd_{n}/2$, 
we can ensure that for all $1\leq p \leq n$ the functions $F_{p}$
are constant on $[s_{n+1}-\ddd_{n+1},s_{n+1}+\ddd_{n+1}]
\sse [0,1]\sm T_{n}$, and thus  
on $H_{n+1}$.

By \eqref{*sn1val*}, 
we   suppose that $\ddd_{n+1}$ is also so small that
\begin{equation}\label{*V5*2}
[s_{n+1}-\ddd_{n+1},s_{n+1}+\ddd_{n+1}]\sse \intt(\widetilde I_{n+1}\sm H_{n}).
\end{equation}

\medskip

Now we build $F_{n+1}$. Set $F_{n+1}(0)=0$. 

Next we  define $F_{n+1}$ on  $\zu \setminus H_{n+1}$, which is constituted by  finitely many
intervals contiguous
to $H_{n+1}\cup \{ 0 \}\cup \{ 1 \}$.  Observe that by choosing a sufficiently small $\ddd_{n+1}$,  we can 
ensure that for all intervals 
$[\aaa,\bbb]$ contiguous to $H_{n+1}\cup \{ 0 \}\cup \{ 1 \}$, $\bbb-\aaa>2\ddd_{n+1}$. In addition, we can also suppose
that both $\widetilde I_{n+2}\sm H_{n+1}$ and $I_{n+3}\sm H_{n+1}$
contain a closed interval of length $2\ddd_{n+1}$.

Now fix an interval  $J=[\aaa,\bbb]$ contiguous to $H_{n+1}\cup \{ 0 \}\cup \{ 1 \}$. We pick an integer  $\kkk\in\N$  such that 
\begin{equation}\label{*V5*1}
\frac{\bbb-\aaa}{\kkk}\leq \frac{\ddd_{n+1}^{2(n+1)}}
{2^{(n+1)^{2}}}
\end{equation} 
and subdivide $J$ into subintervals 
\begin{equation}
\label{defjl}
J_{l}=\Big [\aaa+(l-1)\frac{\bbb-\aaa}{\kkk},\aaa+l\frac{\bbb-\aaa}{\kkk}\Big ]=[\aaa_{l},\bbb_{l}],\ l=1,...,\kkk.
\end{equation}
We define $F_{n+1}$ so that:
\begin{itemize}
\item the increments on $J_l$ are 
\begin{equation}\label{*V6*1}
F_{n+1}(\bbb_{l})-F_{n+1}(\aaa_{l})=
(\bbb_{l}-\aaa_{l})\frac{\ddd_{n+1}^{2}}{2^{n+1}}, \text{ for all }l=1,...,\kkk.
\end{equation}

\item On the interior of $J_{l}$,    $F_{n+1}(x)-F_{n+1}(\aaa_{l})$  is  a Cantor type 
interpolating function $\fff_{l}$ and \eqref{*V6*1} is also satisfied.
\end{itemize}

From \eqref{*V5*1} it follows that
$$(\bbb_{l}-\aaa_{l})^{\frac{1}{n+1}}=\Big (\frac{\bbb-\aaa}{\kkk}\Big )^{\frac{1}{n+1}}\leq \frac{\ddd_{n+1}^{2}}{2^{n+1}}.$$
By \eqref{*V6*1} we obtain
\begin{equation*}
F_{n+1}(\bbb_{l})-F_{n+1}(\aaa_{l})\geq (\bbb_{l}-\aaa_{l})^{1+\frac{1}{n+1}}.
\end{equation*} 

\medskip

By construction, if $x\not\in H_{n+1}$, then with the above notation, $x$ belongs to an interval $J_{x}= [\aaa^x_{ l_{x}},\bbb^x_{ l_{x}}]$,  for  some interval $[\alpha^x,\beta^x]$ and some suitable integer $l_{x}$. 
We put $a_{x,n+1}=\aaa^x_{ l_{x}}$, $b_{x,n+1}=\bbb^x_{ l_{x}}$
and this choice yields part (iv)(c) of the induction.

\medskip

Finally, it remains to impose the behavior of $F_{n+1}$ on $H_{n+1}$. We impose  that  for every $i=1,...,n+1$, the function $F_{n+1}$
coincides with a Cantor type interpolating function on $[s_{i}+\ddd_{n+1}/2,
s_{i}+\ddd_{n+1}]$, is constant on $[s_{i}-\ddd_{n+1},s_{i}+\ddd_{n+1}/2]$
and $F_{n+1}(s_{i}+\ddd_{n+1})-F_{n+1}(s_{i}+\ddd_{n+1}/2)=F_{n+1}(s_{i}+\ddd_{n+1})-F_{n+1}(s_{i}-\ddd_{n+1})=\ddd_{n+1}^{2}.$  
Hence, the incerement of $F_{n+1}$ is defined on all components of $H_{n+1}$, and thus on $\zu$.

\medskip

Since $H_{n+1}$ consists of $n+1$ many component intervals,   \eqref{*V6*1} gives
\begin{equation}\label{*V8*3}
F_{n+1}(1)-F_{n+1}(0)\leq (n+1)\ddd_{n+1}^{2}+\frac{\ddd_{n+1}^{2}}{2^{n+1}}.
\end{equation}

\medskip

The attentive reader has checked that all the parts of the induction are verified for $n+1$ instead of $n$.

\medskip

\begin{definition}
Iterating the procedure, we construct a sequence of functions $(F_n)_{n\geq 1}$ and define the continuous strictly increasing function
$$F=\sum_{n=1}^{+\infty} F_{n}.$$ 
\end{definition}
The continuity follows from the uniform convergence guaranteed by \eqref{*V8*3}, and the strict monotonicity from  the density of the $I_n$ and the fact that the   $F_n$ do not increase at the same locations.

\subsection{Multifractal properties of $F$}

\begin{proposition}
For every  $x \in \zu$ which is not one of the $s_n$, one has $h_F(x)\leq 1$.
\end{proposition}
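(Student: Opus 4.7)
The plan is to produce, for every $x\in\zu$ not equal to any $s_n$, a sequence of scales $r_p\downarrow 0$ along which
\[
F(x+r_p)-F(x-r_p)\;\geq\; r_p^{1+1/p}.
\]
Since $r_p<1$ eventually, dividing $\log\bigl(F(x+r_p)-F(x-r_p)\bigr)\geq(1+1/p)\log r_p$ by $\log r_p<0$ gives $\log\bigl(F(x+r_p)-F(x-r_p)\bigr)/\log r_p\leq 1+1/p$, and letting $p\to\infty$ yields $h_F(x)\leq 1$ in view of \eqref{defexpZ} and the monotonicity of $F$. These scales will come directly from \eqref{*V3*2} applied at each index $p$ with $x\notin H_p$: one then obtains an interval $I_{x,p}=[a_{x,p},b_{x,p}]\ni x$ with $|I_{x,p}|<\ddd_p$ and $F_p(b_{x,p})-F_p(a_{x,p})\geq|I_{x,p}|^{1+1/p}$. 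Because every $F_n$ is non-decreasing, so is $F-F_p$, and the same lower bound passes to $F$. Choosing $r_p:=\max(|x-a_{x,p}|,|x-b_{x,p}|)\leq|I_{x,p}|$, the ball $B(x,r_p)$ contains $[a_{x,p},b_{x,p}]$, whence $F(x+r_p)-F(x-r_p)\geq|I_{x,p}|^{1+1/p}\geq r_p^{1+1/p}$, as required. Note $r_p<\ddd_p\to 0$.

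The whole proposition therefore reduces to a combinatorial claim: \emph{for every $x\in\zu\setminus\{s_n:n\geq 1\}$, one has $x\notin H_p$ for infinitely many $p$.} I would argue this by contradiction. Suppose $x\in H_p$ for all $p\geq P_0$, and for each such $p$ choose $i_p\leq p$ with $|x-s_{i_p}|\leq\ddd_p$. If the sequence $(i_p)$ is bounded, pigeonhole yields some fixed $j$ attained for infinitely many $p$, forcing $|x-s_j|\leq\inf_p\ddd_p=0$, i.e.\ $x=s_j$, contrary to hypothesis. If $(i_p)$ is unbounded, extract a subsequence $p_k$ with $j_k:=i_{p_k}\to\infty$; since $j_k\leq p_k$ we have $\ddd_{p_k}\leq\ddd_{j_k}$, and hence
\[
x\in [s_{j_k}-\ddd_{j_k},\,s_{j_k}+\ddd_{j_k}].
\]
But \eqref{*V5*2} was precisely arranged so that this interval sits inside $\widetilde I_{j_k}\setminus H_{j_k-1}$, forcing $x\notin H_{j_k-1}$. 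As $j_k-1\to\infty$ eventually exceeds $P_0$, this contradicts $x\in H_p$ for every $p\geq P_0$.

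The crux of the argument, and the main obstacle, is this last combinatorial step: the inductive placement of $s_{j_k}$ in $\widetilde I_{j_k}\setminus H_{j_k-1}$, together with the choice of $\ddd_{j_k}$ small enough for \eqref{*V5*2} to hold, is exactly what rules out the otherwise pathological possibility of a non-$s_n$ point $x$ being trapped forever in the shrinking sets $H_p$ by a sequence of nearby $s_i$'s accumulating at $x$. Once this claim is established, \eqref{*V3*2} and the monotonicity of $F-F_p$ combine to deliver $h_F(x)\leq 1$ routinely.
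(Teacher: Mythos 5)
Your proof is correct, and the overall strategy matches the paper's: reduce the proposition to the claim that $x\notin H_p$ for infinitely many $p$ (this is exactly the paper's Lemma~\ref{lemnotin}), and then use \eqref{*V3*2} together with the monotonicity of $F-F_p$ to get the oscillation lower bounds at scales $r_p<\ddd_p\to 0$. Where your argument genuinely diverges is in the proof of the combinatorial claim. The paper argues constructively: assuming $x\in H_{n_0}$, it uses $\ddd_n\downarrow 0$ and $x\neq s_i$ to choose the first index $n_1>n_0$ at which $x$ leaves the $\ddd_{n_1}$-neighborhoods of the ``old'' points $\{s_1,\dots,s_{n_0}\}$ (this is \eqref{*V9*1}), and then combines the $\ddd$-discreteness \eqref{*V2*2} with the placement rule \eqref{*V5*2} to rule out trapping by the ``new'' points $s_{n_0+1},\dots,s_{n_1}$, obtaining $x\notin H_{n_1}$, and iterates. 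Your proof instead runs by contradiction through a clean dichotomy on the trapping indices $(i_p)$: boundedness plus $\ddd_p\downarrow 0$ forces $x=s_j$, while unboundedness lets \eqref{*V5*2} alone evict $x$ from $H_{j_k-1}$ along a subsequence $j_k\to\infty$, contradicting the hypothesis. Both rest on the same key design feature, namely that \eqref{*V5*2} places each new $s_{n+1}$ strictly outside $H_n$; your version is slightly more economical in that it never needs to invoke the discreteness property \eqref{*V2*2}.

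One remark you share with the paper: the oscillation estimate $F(x+r_p)-F(x-r_p)\geq r_p^{1+1/p}$ directly yields $h_\mu(x)\leq 1$ for the associated measure, and the passage to $h_F(x)\leq 1$ via \eqref{defexpZ} tacitly uses that the liminf it controls is below~$1$ (ruling out any polynomial correction of higher degree); the paper makes the identical identification $h_F(x)=h_\mmm(x)$ without comment, so your treatment is consistent with the source.
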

\begin{proof}
  Consider a real number  $x \in \zu$ which is not one of the $s_n$.

\begin{lemma}
\label{lemnotin}
There exists an infinite number of integers $n$ such that   $x\not\in H_{n }$.
\end{lemma}
\begin{proof}
Assume that there exists $n_{0}$ such that
$x\in H_{n_{0}} $. 
 Choose $n_{1}\geq n_{0}$
such that for $n_{0}\leq n <n_{1}$,
\begin{equation}\label{*V9*1}
x\in \bigcup_{i=1}^{n_{0}}[s_{i}-\ddd_{n},s_{i}+\ddd_{n}]\sse H_{n} \ \text{ but } \ x\not\in \bigcup_{i=1}^{n_{0}}[s_{i}-\ddd_{n_{1}},s_{i}+\ddd_{n_{1}}].
\end{equation}
This integer $n_1$ exists, because the sequence $\delta_n$ converges to zero. 

Since $x\in H_{n_{1}-1}$,    $x\not \in \widetilde I_{n_{1}}\sm
H_{n_{1}-1}.$
Moreover, \eqref{*V2*2}, \eqref{*sn1val*} and \eqref{*V9*1} imply that
$x\not \in \cup_{i=n_{0}+1}^{n_{1}-1}[s_{i}-\ddd_{n_{1}-1},
s_{i}+\ddd_{n_{1}-1}].$
By \eqref{*V5*2}, $x\not\in [s_{n_{1}}-\ddd_{n_{1}},s_{n_{1}}+\ddd_{n_{1}}]$, which is included in the interior of $ \widetilde I_{n_{1}}\sm H_{n_{1}-1}.$
Therefore $x\not \in H_{n_{1}}$. 

Since this argument can be repeated, 
there are infinitely many $n_{j}$'s, $j=1,2,...$
such that $x\not \in H_{n_{j}}$.
\end{proof}

Now, if $x\not \in H_{n }$, then item (iv)(c) of the induction provides us with an interval   $I_{x,n }=[a_{x,n },b_{x,n }]$
 such that  \eqref{*V3*2} holds with $n$. Since by Lemma \ref{lemnotin}, this holds for infinitely many intervals whose size goes to zero, 
this implies $h_{F}(x)=h_{\mmm}(x)\leq 1.$ \end{proof}
 
 We take care of the points $s_n$, where $F$ is more regular.

\begin{proposition}
For every $n\geq 1$, $h_F(s_n) =2$. Hence, $d_F(2) = 0$.
\end{proposition}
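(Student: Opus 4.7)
The plan is to establish two matching order-$h^2$ estimates for $F$ around $s_n$: an upper estimate $F(s_n+h)-F(s_n)\leq Ch^2$ valid for all small $|h|$, and a lower estimate $F(s_n+\delta_p)-F(s_n)\geq \delta_p^2$ along the sparse sequence of scales $h=\delta_p$, $p\geq n$, together with the auxiliary estimate $F(s_n+\delta_p/2)-F(s_n)=o(\delta_p^2)$. The upper estimate will force $F'(s_n)=0$ and yield $F\in C^\alpha_{s_n}$ for every $\alpha\leq 2$ with the constant polynomial $P\equiv F(s_n)$; the combination of the lower and auxiliary estimates will rule out $F\in C^\alpha_{s_n}$ for any $\alpha>2$, by producing two incompatible limits for the ratio $(F(s_n+h)-F(s_n))/h^2$ along different sequences of scales.

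For the upper estimate, I would decompose $F=\sum_p F_p$ and bound each term at scale $r>0$. Item (iv)(d) gives that $F_p$ is constant on $H_n\supset [s_n-\delta_n,s_n+\delta_n]$ for $p<n$, so such terms vanish once $r\leq \delta_n$. Item (iv)(a) gives that $F_p$ is constant on $[s_n-\delta_p,s_n+\delta_p/2]$ for $p\geq n$, so the $p$-th term vanishes once $r\leq \delta_p/2$. For the remaining indices, the contribution of $F_p$ to $F(s_n+r)-F(s_n-r)$ is at most $N_p(r)\cdot \delta_p^2+2r\cdot \delta_p^2/2^p$, where $N_p(r)$ counts the components of $H_p$ meeting $B(s_n,r)$ (each of mass $\leq \delta_p^2$ by (iv)(b)) and the second term uses the density $\delta_p^2/2^p$ enforced by \eqref{*V6*1} on intervals contiguous to $H_p$. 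Splitting at $p^* := \max\{p : \delta_p\geq r\}$, the term at $p^*$ is at most $\delta_{p^*}^2\leq 4r^2$; for $p>p^*$, using $N_p(r)\leq r/\delta_p+2$ together with the super-geometric decay $\delta_{p+1}^2<\delta_p^2/(2(p+1))$ forced by \eqref{*V8*2}, the tail $\sum_{p>p^*}$ sums to $O(r^2)$. The main technical obstacle is precisely this splitting, since both the $p^*$-th term and the tail contribute at comparable orders $r^2$, and the control must be extracted uniformly in $r$ from \eqref{*V8*2}.

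The lower estimate is immediate from (iv)(a)--(b): $F_p(s_n)=F_p(s_n+\delta_p/2)$ and $F_p(s_n+\delta_p)-F_p(s_n+\delta_p/2)=\delta_p^2$, so by monotonicity of every $F_q$ we have $F(s_n+\delta_p)-F(s_n)\geq F_p(s_n+\delta_p)-F_p(s_n)=\delta_p^2$. The auxiliary estimate $F(s_n+\delta_p/2)-F(s_n)=o(\delta_p^2)$ follows by re-running the upper-bound summation at $r=\delta_p/2$: the $p$-th term still vanishes (the constancy region of $F_p$ contains $B(s_n,\delta_p/2)$) and the tail $\sum_{q>p}$ telescopes to $O(\delta_p^2/\sqrt{p+1})$ via \eqref{*V8*2}.

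To conclude, the upper estimate combined with $P\equiv F(s_n)$ gives $F\in C^\alpha_{s_n}$ for every $\alpha\leq 2$, so $h_F(s_n)\geq 2$. Conversely, fix $\alpha>2$ and suppose $F\in C^\alpha_{s_n}$ with polynomial $P(h)$ of degree $d<[\alpha]$. Since $P(0)=F(s_n)$ and $F'(s_n)=0$ force the constant and linear coefficients of $P$ to be $F(s_n)$ and $0$, one can write $P(h)-F(s_n)=a_2 h^2+a_3 h^3+\ldots+a_d h^d$ (with the convention that all $a_k=0$ when $d\leq 1$). From $|F(s_n+h)-P(h)|\leq C|h|^\alpha$ with $\alpha>2$, one obtains
\[
\frac{F(s_n+h)-F(s_n)}{h^2}\ =\ a_2+O(|h|)+O(|h|^{\alpha-2})\ \longrightarrow\ a_2\quad\text{as }h\to 0^+.
\]
Along $h=\delta_p$ the ratio is $\geq 1$, forcing $a_2\geq 1$; along $h=\delta_p/2$ the ratio tends to $0$ by the auxiliary estimate, forcing $a_2=0$. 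This contradiction gives $h_F(s_n)\leq 2$, hence $h_F(s_n)=2$. The concluding statement $d_F(2)=0$ then follows because the previous proposition shows $E_F(2)\subset S:=\{s_n:n\geq 1\}$, a countable set of Hausdorff dimension $0$.
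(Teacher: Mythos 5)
Your proof is correct and follows the same overall architecture as the paper's: a two-sided quadratic estimate at $s_n$, where the upper bound $|F(s_n+h)-F(s_n)|\le Ch^2$ is obtained by decomposing $F=\sum_p F_p$ and using constancy (items (iv)(a), (iv)(d)) plus the small total mass of the tail, and the lower bound comes from $F_p(s_n+\delta_p)-F_p(s_n)=\delta_p^2$. The paper organizes the upper bound by annuli $\delta_{n+1}<|y-s_n|<\delta_n$ with the two cases $|y-s_n|\ge \delta_n/2$ and $|y-s_n|<\delta_n/2$ and the crude tail bound \eqref{*V8*3bis}; your splitting at $p^*$ is just a re-packaging of the same estimate.

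The genuine addition in your write-up is the auxiliary estimate $F(s_n+\delta_p/2)-F(s_n)=o(\delta_p^2)$ and the resulting two-sequence argument ruling out \emph{any} quadratic polynomial trend. The paper jumps from the bilateral estimates directly to ``$h_F(x)=h_\mu(x)=h_\mu(s_{n_0})=2$'', which is clean for the measure exponent $h_\mu$ (no polynomial subtraction in \eqref{defexpmu}), but is terse for the function exponent $h_F$ as defined in \eqref{defpoint}: once $F'(s_n)=0$ is forced, one must still exclude an approximating polynomial with a nonzero $h^2$-term, and the mere inequality $F(s_n+\delta_p)-F(s_n)\ge\delta_p^2$ does not do that by itself. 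Your observation that the ratio $(F(s_n+h)-F(s_n))/h^2$ has distinct limit points along $h=\delta_p$ and $h=\delta_p/2$ supplies exactly this missing step, and the refined count $N_q(r)\lesssim r/\delta_q$ (rather than the global variation bound) is what yields the needed $o(\delta_p^2)$ rate. So the proposal is not merely a reformulation: it makes the identification $h_F(s_n)=h_\mu(s_n)=2$ fully explicit, which is the point where the paper is at its sketchiest.
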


\begin{proof}
Consider one of the  points $ s_{n_{0}}$,  and $n\geq n_{0}$.  Then $s_{n_{0}}\in H_{n}$ and by part (iv)(d) of the induction, 
the functions $F_{p}$ with $1\leq p   <n$ are constant on
$[s_{n_{0}}-\ddd_{n},s_{n_{0}}+\ddd_{n}]$.

By  parts (iv)(a) and  (iv)(b) of the induction, we have 
\begin{equation}\label{*V8*1}
F_{n}(s_{n_{0}}+\ddd_{n})-F_{n}(s_{n_{0}})=
F_{n}(s_{n_{0}}+\ddd_{n})-F_{n}(s_{n_{0}}-\ddd_{n})=
\ddd_{n}^{2}.
\end{equation}

Suppose $y\in [s_{n_{0}}-\ddd_{n},s_{n_{0}}+\ddd_{n}]\sm
[s_{n_{0}}-\ddd_{n+1},s_{n_{0}}+\ddd_{n+1}]$.
Then 
\begin{eqnarray*}
&&|F(y)-F(s_{n_{0}})|\leq\\
&& \sum_{k=1}^{n-1}|F_{k}(y)-F_{k}(s_{n_{0}})|+
|F_{n}(y)-F_{n}(s_{n_{0}})|+\sum_{k=n+1}^{\oo}
|F_{k}(y)-F_{k}(s_{n_{0}})|\\
&& =0+\Delta_{n}+\Sigma_{n+1}.
\end{eqnarray*}

If $\ddd_{n}/2 \leq |y-s_{n_{0}}|\leq \delta_n$, then $|\Delta_{n}|\leq \ddd_{n}^{2}\leq 
4|y-s_{n_{0}}|^{2}$ and $|\Sigma_{n+1}|\leq \ddd_{n}^{2}\leq 
4|y-s_{n_{0}}|^{2}$.

If $\ddd_{n+1}<|y-s_{n_{0}}|< \ddd_{n}/2$ then $\Delta_{n}=0$. By using the definition of $H_{n+1}$
and the choice of $\ddd_{n}$ and $\ddd_{n+1}$, we obtain that
$|\Sigma_{n+1}|< 3\ddd_{n+1}^{2}+\sum_{k=n+2}^{\oo} (k\ddd_{k}^{2}+\frac{\ddd_{k}^{2}}{2^{k}})< 4|y-s_{n_{0}}|^{2}$.

We have used that  \eqref{*V8*2} and  \eqref{*V8*3bis} holds for all $n\geq n_{0}$. This combined with  \eqref{*V8*1} implies that $h_{F}(x)=h_{\mmm}(x)=h_{\mmm}(s_{n_{0}})=2$.  Since all the other points have an exponent less than 1, this concludes the proof.
\end{proof}

 \begin{proposition}
 For every $h\in \zu$, $d_F(h)=h$.
 \end{proposition}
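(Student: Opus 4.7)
The upper bound $d_F(h)\leq h$ for all $h\in\zu$ follows immediately from Proposition~\ref{prop2} applied to the Borel probability measure $\mmm$ whose Borel integral is $F$. The main work is in the matching lower bound $d_F(h)\geq h$, which together with the upper bound yields the desired equality.

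The key observation is that each $F_n$ is built by pasting Cantor-type interpolating functions $\fff_l$ on the subintervals $J_l$ of \eqref{defjl}, and by the very definition of a Cantor-type interpolating function, each $\fff_l$ has full spectrum $d_{\fff_l}(h)=h$ for every $h\in\zu$. For each $h\in(0,1]$ the level set $E_{n,l,h}:=\{x\in J_l:h_{\fff_l}(x)=h\}$ therefore has Hausdorff dimension exactly $h$, and since the $J_l$'s become dense in $\zu$ as $n\to\oo$, such level sets are distributed throughout $\zu$. Given $h\in(0,1]$ and an arbitrary subinterval $U\sse\zu$, the plan is to choose $n$ large enough that some $J_l\sse U$ and argue that a dimension-$h$ subset of $E_{n,l,h}$ consists of points $x$ with $h_F(x)=h$.

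For any such $x$, the inequality $h_F(x)\leq h_{F_n}(x)=h$ is automatic: since each $F_m$ is monotone nondecreasing, $F(B(x,r))\geq F_n(B(x,r))$, so dividing by $\log r<0$ yields $h_F(x)\leq h_{F_n}(x)$. The reverse inequality requires bounding $F(B(x,r))=\sum_m F_m(B(x,r))\leq r^{h-\eee}$ for every $\eee>0$ and all sufficiently small $r$. The $m=n$ contribution is $\leq r^{h-\eee/2}$ by choice of $x$. For $m>n$, the total-variation bound \eqref{*V8*3bis}, the rapid decay \eqref{*V8*2} of $\ddd_m$, and the density estimate $F_m(B(x,r))\leq 2r\cdot \ddd_m^2/2^m$ on the natural subintervals $J_{l'}^{(m)}$ of size $\leq \ddd_m^{2m}/2^{m^2}$ produce a contribution vastly dominated by $r^{h-\eee/2}$. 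For $m<n$, $x$ lies in a $J_{l'}^{(m)}$ whose length vastly exceeds $r$, and the same density estimate yields a linear-in-$r$ bound of negligible order.

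The main technical obstacle is that the $\fff_{l'}^{(m)}$ are themselves Cantor-type functions with full spectrum, so at certain exceptional points of $E_{n,l,h}$ the local oscillation of an $F_m$ ($m\neq n$) could exceed the average-density estimate and force $h_F(x)<h$. This will be resolved as follows: outside the Cantor set $T_m$ of $F_m$, the function $F_m$ is locally constant and therefore contributes zero to $F_m(B(x,r))$ at small enough scales; hence only points $x\in E_{n,l,h}\cap\bigcup_{m\neq n}T_m$ are problematic. Using the lacunary scales \eqref{*V8*2} and the zero-Lebesgue-measure structure of each $T_m$, one verifies that this exceptional set has Hausdorff dimension strictly less than $h$ within $E_{n,l,h}$, so removing it leaves a subset of full dimension $h$ on which $h_F(x)=h$. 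This gives $\dim(E_F(h)\cap U)\geq h$ for every subinterval $U$, establishing both $d_F(h)\geq h$ and the HM character of $F$, and thereby $d_F(h)=h$ for every $h\in\zu$, completing the proof of Proposition~\ref{*spectrgap}.
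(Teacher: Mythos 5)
Your upper bound and your monotonicity observation (that $F(B(x,r))\ge F_m(B(x,r))$ forces $h_F(x)\le h_{F_m}(x)$) are both correct, and the overall strategy --- pick a dimension-$h$ subset of a level set of the Cantor-type block $\fff_l$ of some $F_n$, then control the contributions of all the other $F_m$ --- is a reasonable and more explicit version of what the paper's one-line ``$d_{\widetilde F_l}(h)\ge h$ by construction'' must mean. The paper itself does not spell out how the other $F_m$'s are dismissed, so you are right that this is where the real work lies.

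However, the step you use to close the argument is not justified and, as stated, is false. You claim that $E_{n,l,h}\cap\bigcup_{m\ne n}T_m$ has Hausdorff dimension strictly less than $h$, citing the lacunary scales in \eqref{*V8*2} and the fact that each $T_m$ has zero Lebesgue measure. Neither ingredient bears on Hausdorff dimension: \eqref{*V8*2} controls the magnitude $\ddd_m^2$ of the increments of $F_m$, not the \emph{location} of the set $T_m$; and zero Lebesgue measure says nothing about Hausdorff dimension. In fact each $T_m$ carries the Stieltjes measure of $F_m$, and since $F_m$ is a concatenation of Cantor-type interpolating functions with $d_{\fff}(1)=1$, the set $T_m$ has Hausdorff dimension exactly $1$. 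So $T_m$ is a dimension-$1$, Lebesgue-null Cantor set, and there is no general reason its intersection with the dimension-$h$ set $E_{n,l,h}$ should have dimension $<h$; the generic expectation for a dimension-$h$ set meeting a dimension-$1$ set in $[0,1]$ is that the intersection has dimension $h$. Nothing in the construction as written keeps $E_{n,l,h}$ away from the $T_m$: for $m>n$ the Cantor pieces of $F_m$ are built \emph{after} $\fff_l$ with no avoidance requirement, and for $m<n$ the sets $T_m$ are already in place when $\fff_l$ is chosen, but the construction imposes no disjointness between $E_{\fff_l}$ and $T_m$. So the ``exceptional set'' could in principle contain all of $E_{n,l,h}$, and the proof would collapse.

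To make the argument rigorous one needs an actual mechanism. Two routes are natural: (a) exploit the freedom in the notion of Cantor-type interpolating function to arrange, at each stage, that the new Cantor supports are disjoint from the previously built ones (they are all nowhere dense and Lebesgue-null, so this can be done, but it must be \emph{said} and it changes the construction); or (b) avoid a pointwise argument altogether and work with a Frostman measure $\nu_\eee$ carried by $E_{n,l,h}$ with $\nu_\eee(B(x,r))\le C r^{h-\eee}$: one then computes $\int \mu_m(B(x,r))\,d\nu_\eee(x)\le C r^{h-\eee}\mu_m([0,1])$, sums over $m$ using \eqref{*V8*2}--\eqref{*V8*3bis}, and uses a Borel--Cantelli argument to show that $\nu_\eee$-a.e.\ $x$ has $h_F(x)\ge h-2\eee$; combining with $h_F\le h$ and letting $\eee\to 0$ gives the lower bound. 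Your current proposal asserts the conclusion of such an argument without supplying either of these mechanisms, so the lower bound is not established.
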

 \begin{proof}
 Obviously, by Proposition \ref{prop2}, only the lower bound needs to be proved.
 
By construction, in each interval $J_l = [\alpha_l,\beta_l]$ (recall \eqref{defjl}),  if   $\widetilde F_{l}$  stands for the restriction
of $F$ onto $J_l$, one has   $d_{\widetilde F_{l}}(h)  \geq h$ for all $h\in [0,1]$.  Hence  $d_{\widetilde F_{l}} (h) =h$ for $h\in [0,1]$. The fact that $F$ is homogeneously multifractal follows from the density of the intervals  $I_n$ (which implies the density of the $J_l$).
 \end{proof}


\section{Construction of a monotone function with H\"older exponents 1}\label{sechomone}
 
In this section we prove Proposition \ref{homone}. The function $Z$ we obtain is a sort of  monotone increasing Weierstrass-like function.

\begin{proof}

First we need  positive sequences $(a_{n})_{n\geq N}$ and $(b_{n})_{n\geq N}$.
The sequence $(a_{n})$ will tend to zero sufficiently fast,
while $(b_{n})$ will consist of integers  growing  to infinity.
The sequences are built inductively. Set $a_{1}=1$, $b_{1}=4$.
Suppose that $n >1$ and the terms of $a_{n'}$ and $b_{n'}$ have already been
selected for $n'<n $.
We select $a_{n }\in (0,1)$ and 
\begin{eqnarray}\label{*mod3}
\text{$b_{n}\in \N$ is  congruent to $3$ modulo $4$} \ \ \ \ \ \ \ \ \ \ \ \ \\
\mbox{such that } \ \ \ \ \ 
\label{*12070901a}
 \frac{a_{n-1}}{300}  >  a_{n} > (4b_{n })^{\frac{-1}{n }}  \  \ \mbox { and } \ \ \frac{a_{n}b_{n}}{100}
>
\sum_{n'=1}^{n-1}a_{n'}b_{n'}.
\end{eqnarray}
 Iterating the scheme gives the sequences $(a_{n})_{n\geq N}$ and $(b_{n})_{n\geq N}$.
From the left handside inequality in 
\eqref{*12070901a}, it follows that
\begin{equation}\label{*12070901b}
\frac{a_{n }}{100}
>
\sum_{n'=n +1}^{\oo}a_{n'}.
\end{equation}

Put $g(x)=0$ on $[0,1],$ $g(x)=1$ on $[2,3],$  
 $g(4)=0$ and suppose that $g$ is linear on
 $[1,2]$ and $[3,4]$, moreover to define $g$ on $\R$
we also assume that it is periodic by $4$ (see Figure \ref{fig5}).
Further, we set
\begin{equation*}
G(x)=\sum_{n=1}^{\oo}a_{n}g(b_{n}x)\text{ and }Z(x)=\int_{0}^{x}G(t)dt.
\end{equation*}

Then one can easily see that $G$ is continuous as uniform limit of continuous functions, and $Z$ is continuously differentiable
with $Z'(x)=G(x)$ for all $x\in [0,1].$
In particular,  we have $h_{Z}(x)\geq 1$ for all $x\in [0,1]$.
 If  we can verify that for every $\xo\in [0,1]$
and for every sufficiently large 
$n_{1}\in \N$ we can find $\xe$, depending on $n_{1}$,  such that 
$|x_{1}-x_{0}|\leq 4/b_{n_{1}}$ and
\begin{equation}\label{*12070903a}
|Z(\xe)-Z(\xo)-Z'(\xo)(\xe-\xo)|>\frac{1}{16^{2}}|\xe-\xo|^{1+\frac{1}{n_{1}}},
\end{equation}
then $h_{Z}(x)\leq 1$ for all $x\in [0,1]$. Combining this with the derivability of $Z$,   we obtain that $h_{Z}(x)=1$ for all $x\in [0,1]$,
proving Proposition \ref{homone}.

\begin{center}
\begin{figure}
  \includegraphics[width=8.0cm,height = 2.8cm]{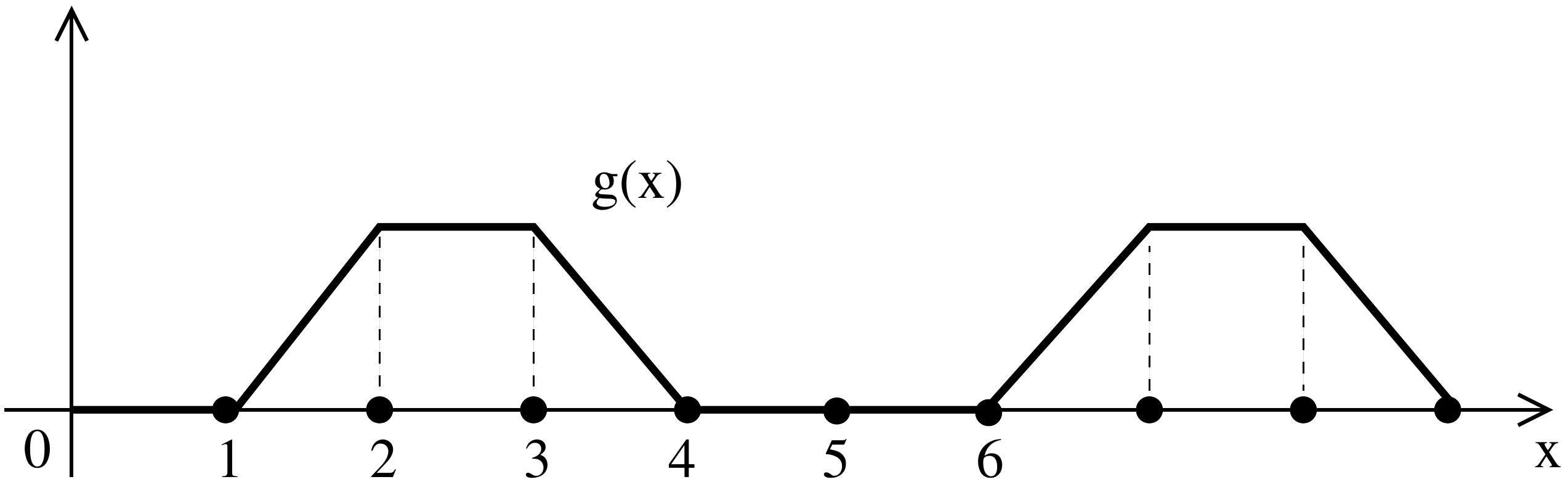}
\caption{The function $g$} \label{fig5}
\end{figure}
\end{center}

Observe that $g$ is a Lipschitz function with Lipschitz constant one, hence 
\begin{equation}\label{*12070903b}
|g(b_{n_{1}}x)-g(b_{n_{1}}x)|\leq b_{n_{1}}|x-y|\text{ for all }x,y\in\R.
\end{equation}

Fix $x_0\in \zu$. It remains us to check  that \eqref{*12070903a} holds true.
If $x_{0}=0$, then $g(b_{n_{1}}\xo)=0$
and if $x_{0}=1$, then $g(b_{n_{1}}\xo)=1$, by \eqref{*mod3}.

From the definition of $g$,   if $n_{1}$ is sufficiently large,  we can
find $\xe\in[0,1]$  such that:
\begin{itemize}
\smallskip\item
 $|\xe-\xo|\leq {4}/{b_{n_{1}}}$,
\smallskip\item
$g(b_{n_{1}}x)-g(b_{n_{1}}\xo)$ is of constant sign for $x$ in the interval  $I_{0,1}$ with endpoints
$\xo$ and $\xe$,
\smallskip\item  there exists a subinterval $I'_{0,1}\sse I_{0,1}$ of length
$\frac{1}{b_{n_{1}}}$  such that $|g(b_{n_{1}}x)-g(b_{n_{1}}\xo)|\geq 1/2$
for all $x\in I'_{0,1}$.
\end{itemize}
 
Without limiting generality, we suppose that  $\xe>\xo$, $[\xo,\xe]=I_{1,0}$, 
and 
\begin{eqnarray}\label{*12070902a}
|\xe-\xo|\leq  {4}/{b_{n_{1}}},  \  \  \ g(b_{n_{1}}x)-g(b_{n_{1}}\xo)\geq 0 
\text{ for }x\in  I_{1,0}\\
\nonumber
g(b_{n_{1}}x)-g(b_{n_{1}}\xo)\geq {1}/{2}  \ \ \text{ for all }
x\in [x',x'+ {1}/{b_{n_{1}}}]=I'_{0,1} \sse I_{1,0}.
\end{eqnarray}

Then
\begin{eqnarray*}
I & :=  & |Z(\xe)-Z(\xo)-Z'(\xo)(\xe-\xo)| = \Big |\int_{\xo}^{\xe}G(t)dt-G(\xo)(\xe-\xo)\Big |\\
& = & \Big |\sum_{n=1}^{\oo}a_{n}\Big (\int_{\xo}^{\xe}g(b_{n}t)dt-g(b_{n}\xo )(\xe-\xo)\Big )\Big |
 \end{eqnarray*}
by the uniform convergence of the series. Dividing the sum into three parts, and   using successively \eqref{*12070903b},  \eqref{*12070902a},  and the fact that $|g|\leq 1$, we obtain
\begin{eqnarray*} 
I & = & \Big |\sum_{n=1}^{n_{1}-1}\Big (a_{n}\int_{\xo}^{\xe}g(b_{n}t)-g(b_{n}\xo)dt\Big )
+a_{n_{1}}\int_{\xo}^{\xe}g(b_{n_{1}}t)-g(b_{n_{1}}\xo)dt\\
&&
\sum_{n=n_{1}+1}^{\oo}\Big (a_{n}\int_{\xo}^{\xe}g(b_{n}t)-g(b_{n}\xo)dt\Big ) \Big |\\
&\geq&
a_{n_{1}}\int_{\xo}^{\xe}g(b_{n_{1}}t)-g(b_{n_{1}}\xo)dt-
\sum_{n=1}^{n_{1}-1}\Big (a_{n}\int_{\xo}^{\xe}|g(b_{n}t)-g(b_{n}\xo)|dt\Big )
\\
&&-\sum_{n=n_{1}+1}^{\oo}\Big (a_{n}\int_{\xo}^{\xe}|g(b_{n}t)-g(b_{n}\xo|dt\Big )\\
&\geq&  \frac{a_{n_{1}}}{2b_{n_{1}}}- \sum_{n=1}^{n_{1}-1}\Big (a_{n}\int_{\xo}^{\xe}b_{n}|t-\xo|dt\Big )-\sum_{n=n_{1}+1}^{\oo}a_{n}|\xe-\xo| 
  \end{eqnarray*}
Then, by  \eqref{*12070901a}, \eqref{*12070901b} and \eqref{*12070902a}, one finally has
\begin{eqnarray*}
I &  \geq &  \frac{a_{n_{1}}}{2b_{n_{1}}}- \sum_{n=1}^{n_{1}-1}a_{n}b_{n}\frac{16 }{2b_{n_{1}}^{2}}-\sum_{n=n_{1}+1}^{\oo}a_{n}\frac4{b_{n_{1}}}\geq
\frac{a_{n_{1}}}{4b_{n_{1}}}\\
& \geq & (4b_{n_{1}})^{-1-\frac{1}{n_{1}}}\geq \Big (\frac{|\xe-\xo|}{16}\Big )^{1+\frac{1}{n_{1}}}.
\end{eqnarray*}
This implies \eqref{*12070903a}.
\end{proof} 


\end{document}